\newtheorem{theorem}{Theorem}[section]
\newtheorem{definition}[theorem]{Definition}
\newtheorem{claim}[theorem]{Claim}
\newtheorem{lemma}[theorem]{Lemma}
\newtheorem{corollary}[theorem]{Corollary}
\newtheorem{example}[theorem]{Example}
\newtheorem{remark}[theorem]{Remark}
\newtheorem{note}[theorem]{Note}
\newcommand{\Ref}[1]{(\ref{#1})}
\newcommand{\Real}{\mathbb R}
\newcommand{\Rati}{\mathbb Q}
\newcommand{\Int}{\mathbb Z}
\newcommand{\Net}{\mathbb N}
\newcommand{\chos}[2]{{#1 \choose #2}}
\newcommand{\Trop}{\mathbb T}
\newcommand{\trop}[1]{\mathcal{#1}}
\newcommand{\tI}{\trop{I}}
\newcommand{\tP}{\trop{P}}
\newcommand{\tS}{\trop{S}}
\newcommand{\tpF}{f}
\newcommand{\tfF}{\tilde{f}}
\newcommand{\To}{\longrightarrow }
\newcommand{\Dir}{\thSpc \Longrightarrow \thSpc}
\newcommand{\IFF}{\thSpc \Longleftrightarrow \thSpc}
\newcommand{\tUniS}{-\infty}
\newcommand{\al}{\alpha}
\newcommand{\bt}{\beta}
\newcommand{\sig}{\sigma}
\newcommand{\dl}{\delta}
\newcommand{\Dl}{\Delta}
\newcommand{\lm}{\lambda}
\newcommand{\Lm}{\Lambda}
\newcommand{\Var}{V}
\newcommand{\TrS}{\oplus}
\newcommand{\TrP}{\odot}
\newcommand{\OP}{\left(}
\newcommand{\CP}{\right)}
    \newenvironment{proof}{
    \smallskip
    \noindent\emph{Proof.}}{\hfill\(\Box\)
    \bigskip
    } \fi
\newcommand{\vvMat}[9]{\small{\OP \begin{array}{ccc}
  #1 & #2 & #3\\
  #4 & #5 & #6\\
  #7 & #8 & #9\\
\end{array}\CP}}
\newcommand{\ifdef}[3]{\ifthenelse{\equal{#1}{true}}{#2}{#3}}
\newcommand{\thSpc}{\; \; \;}
\def\pSkip{\vskip 1.5mm \noindent}
\newcommand{\etype}[1]{\renewcommand{\labelenumi}{(#1{enumi})}}
\def\eroman{\etype{\roman}}
\def\({\left(}
\def\){\right)}
\def\dual{*}
\newcommand{\fdual}[1]{{#1}^{\dual}}
\def\dTrop{\fdual{\Trop}}
\def\tdTrop{\fdual{\tTrop}}
\def\eqR{\overset{_e}{\sim}}
\def\neqR{\overset{_e}{\nsim}}
\def\uf{f^i}
\def\la{\lm}
\def\sg{\sigma}
\def\completion{completion}
\def\supplement{supplement}
\def\Supplement{Supplement}
\def\reversal{reversal}
\def\symmetry{symmetry}
\def\Symmetry{Symmetry}
\def\minf{-\infty}
\def\ra{a}
\def\bfa{\textbf{\ra}}
\def\rb{b}
\def\bfb{\textbf{\rb}}
\def\bfi{ \textbf{i}}
\def\bfj{\textbf{j}}
\def\bfk{\textbf{k}}
\def\bft{\textbf{t}}
\def\bfs{\textbf{s}}
\def\bfo{\textbf{o}}
\def\bfv{\textbf{v}}
\def\Dl{\Delta}
\def\a{\al}
\newcommand{\pfi}[1]{\operatorname{N}({#1})}
\newcommand{\per}[1]{\operatorname{per}({#1})}
\def\eqM{\; \overset{\operatorname{w}}{=} \;}
\def\neqM{\; \overset{\operatorname{w}}{\ne}\;}
\def\tTrop{\tilde \Trop}
\def\eqR{\overset{_e}{\sim}}
\def\T{\Trop}
\def\tpF{f}
\def\semiDU{\overset{\operatorname{semi}}{\sqcup}}
\def\multU{\; \overset{\operatorname{w}}{\cup} \;}
\def\MultU{\; \overset{\operatorname{w}}{\bigcup} \; }
\def\mMinus{\setminus_{\operatorname{w}}}
\newcommand{\comp}[1]{{#1}^{\operatorname{spl}}}
\newcommand{\mincomp}[1]{{#1}^{\operatorname{min-spl}}}
\newcommand{\rvl}[1]{{#1}^{\operatorname{rvl}}}
\newcommand{\sym}[1]{{#1}^{\operatorname{sym}}}
\newcommand{\trn}[1]{{#1}^{\operatorname{trn}}}
\newcommand{\pSet}[2]{{#1}^{(#2)}}
\def\Var{W}
\def\Sur{H}
\def\Prm{P}
\def\Cur{C}
\def\Set{S}
\newcommand{\red}[1]{\widetilde{#1}}
\def\rVar{\red{W}}
\def\rSur{\red{H}}
\def\Lt{\Sigma}
\begin{document}


\title[Completions, Reversals, and Duality for Tropical Varieties] {Completions, Reversals, and Duality \\ for Tropical Varieties}


\author{Zur Izhakian}\thanks{The first author has been supported by the
Chateaubriand scientific post-doctorate fellowships, Ministry of
Science, French Government, 2007-2008.}
\thanks{The second author has been supported in part by the
Israel Science Foundation, grant 1178/06.}
\address{Department of Mathematics, Bar-Ilan University, Ramat-Gan 52900,
Israel} \address{ \vskip -6mm CNRS et Universit´e Denis Diderot
(Paris 7), 175, rue du Chevaleret 75013 Paris, France}
\email{zzur@math.biu.ac.il, zzur@post.tau.ac.il}
\author{Louis Rowen}
\address{Department of Mathematics, Bar-Ilan University, Ramat-Gan 52900,
Israel} \email{rowen@macs.biu.ac.il}

\subjclass{Primary 12K10, 13B25; Secondary 51M20}
\date{October 2008}


\keywords{Tropical geometry, Max-plus algebra, Polyhedral
complexes, Newton and lattice polytopes, Symmetry and \symmetry }


\begin{abstract} We state and prove an identity for polynomials over the
max-plus algebra, which shows that any polynomial divides a
product of binomials. Interpreted in tropical geometry, any
tropical variety $\Var$ can be completed to a union of
hypersurfaces. In certain situations,  $\Var$ has a ``reversal''
variety, which together with $\Var$ already yields the union of
hypersurfaces; this phenomenon also is explained in terms of the
algebraic structure.
\end{abstract}

\maketitle



\section*{Introduction}\label{intro}

 Tropical mathematics  has been developed mainly
over the tropical semiring $\Trop_{\max} = \Real\cup\{-\infty\}$,
whose addition and multiplication are respectively  the operations
of maximum and summation,
$$a\TrS b=\max\{a,b\},\quad a\TrP b=a+b,$$
 cf.~\cite{Gathmann:0601322,IMS,MikhalkinBook,MikhalkinICM,pin98,RST}. Factorization in polynomials over $\Trop_{\max}$ is
notoriously difficult,
cf.~\cite{IzhakianRowen2007SuperTropical,Kim:142898}. One reason
is that different polynomials in $\Trop[ \la _1, \dots, \la _n]$
(viewed as  functions from $\Trop^{(n)}$ to $\Trop$) may act as
the same function over the max-plus algebra, when the values of
one monomial are dominated by  other monomials. Thus, we write $f
\eqR g$ to denote that polynomials $f$ and $g$ correspond to the
same function. Our main result in this paper is the following
 new identity of polynomials in  $\Trop[ \la _1, \dots, \la _n]$:

\begin{theorem}\label{permprime} Suppose $f = \sum_{i=1}^m f_i \in \Trop[\lm_1,\dots, \lm_n]$, for $m \ge 2$. Then
\begin{equation}\label{twofacts}
 \prod_{i < j } (f_i + f_j)  \eqR \big( \sum_i f_i \big) \big(\sum_{i < j }
f_i f_j\big) \cdots \big(\sum_{i }\prod_{j \neq i} f_j\big).
\end{equation}
(The right side is written as a product of $m-1$ terms.)
\end{theorem}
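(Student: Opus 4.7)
The strategy is to check \eqref{twofacts} pointwise, since $\eqR$ is by definition equality of induced functions on $\Trop^{(n)}$. Fix $\mathbf{x}\in\Trop^{(n)}$ and set $v_i := f_i(\mathbf{x})\in\Trop$. Both sides of \eqref{twofacts} are symmetric in the indices $1,\dots,m$, so I may relabel the $f_i$'s so that $v_1\ge v_2\ge\cdots\ge v_m$, with the tropical zero $-\infty$ at the bottom of the order.

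Under this ordering the left-hand side evaluates cleanly: each factor $f_i + f_j$ (tropical sum = max) gives $v_i$ whenever $i<j$, and the tropical product is ordinary addition, so
\[
\text{LHS}(\mathbf{x}) \;=\; \sum_{1\le i<j\le m} v_i \;=\; \sum_{i=1}^{m-1}(m-i)\,v_i.
\]
For the right-hand side, the $k$-th factor is the $k$-th elementary symmetric polynomial in $f_1,\dots,f_m$, whose max-plus evaluation is $\max_{|S|=k}\sum_{i\in S}v_i = v_1+\cdots+v_k$ (the top-$k$ initial segment realizes the maximum, even in the presence of $-\infty$ entries, under the convention $a+(-\infty)=-\infty$). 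Multiplying the $m-1$ factors tropically, i.e.\ summing ordinarily, gives
\[
\text{RHS}(\mathbf{x}) \;=\; \sum_{k=1}^{m-1}(v_1+\cdots+v_k) \;=\; \sum_{i=1}^{m-1}(m-i)\,v_i,
\]
which matches the left-hand side. Since $\mathbf{x}$ was arbitrary, this proves \eqref{twofacts}.

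The main obstacle is conceptual rather than technical: the analogous identity of \emph{formal} polynomials in $\Trop[\lm_1,\dots,\lm_n]$ is false (there are many more monomial terms on the left than on the right), so the proof must genuinely exploit that $\eqR$ is functional equivalence and that dominated monomials may be discarded. Once this is recognized, the identity reduces, after sorting, to the elementary observation that for a decreasing tuple $v_1\ge\cdots\ge v_m$ the maximum of $\sum_{i\in S}v_i$ over $k$-element subsets $S$ is attained on $\{1,\dots,k\}$, and both sides of \eqref{twofacts} then collapse to the same weighted sum $\sum_i (m-i)\,v_i$. No deeper combinatorial input is required.
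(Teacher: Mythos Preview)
Your proof is correct. The pointwise argument is valid: after sorting the values $v_i=f_i(\mathbf x)$, both sides collapse to $\sum_{i=1}^{m-1}(m-i)v_i$, and the edge cases with $-\infty$ work out as well (both sides equal $-\infty$ precisely when at least two $v_i$ are $-\infty$).

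This is, however, a genuinely different route from the paper's. The paper introduces the tropical Vandermonde matrix $V_f=(f_i^{\,j-1})$ and shows, via Lemmas~\ref{lem:essential}, \ref{lem:essential2}, and \ref{lem:Van}, that its permanent $\per{V_f}=\sum_{\sigma\in S_m}f_1^{\sigma(0)}\cdots f_m^{\sigma(m-1)}$ is $e$-equivalent to each side of \eqref{twofacts}; Theorem~\ref{permprime} then follows by specialization. The paper's approach thus proves something stronger along the way---it identifies both sides with a third canonical object and establishes a domination lemma (Lemma~\ref{lem:essential2}) of independent interest---at the cost of a combinatorial induction on the index vector $\iota(\mathbf i)$. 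Your approach bypasses all of this: you never name the permanent, and the only combinatorial input is the trivial fact that the top $k$ elements of a sorted list maximize a $k$-term partial sum. What the paper buys is the Vandermonde framing used later (e.g.\ in Corollary~\ref{cor:permprime2} and the zonotope remark); what your argument buys is brevity and transparency for the theorem as stated.
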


This result could be viewed as an extreme failure of unique
factorization, since \emph{every} polynomial~$f$ which is a sum of
at least three distinct monomials is part of a factorization that
is not unique. Specifically, if $f_i$ are the monomials of $f$,
then $f$ is a factor of  the product $\prod_{i \ne j} (f_i+f_j),$
as was seen in \cite[Theorem
12.4]{IzhakianRowen2007SuperTropical}. On the other hand,
Theorem~\ref{permprime} has a positive geometric interpretation --
Every tropical variety~$\Var$  can be ``completed'' to a variety
$\tP(\Var)$ comprised of various $k$-dimensional planes,  which in
turn can be decomposed into a union of varieties that can be
interpreted via \eqref{twofacts}.

Equation \eqref{twofacts} also gives rise to ``reversals'' of
tropical varieties and a duality in tropical geometry. The
motivation for the former came from Mikael Passare's talk,
Mathematisches Forschungsinstitut Oberwolfach, December 2007.

\section{The tropical polynomial algebra}

Since our basic result is algebraic, we need to consider the
underlying algebraic structure of the max-plus algebra $\T$. It is
easy to see that $\T$ is a semiring (which by definition satisfies
all of the axioms of an associative ring except for existence of
negatives), where $-\infty$ is the zero element of $\T$. In fact,
the existence of negatives fails spectacularly, since $a \TrS b =
\max\{a,b\}$ can never be $-\infty$ unless $a= b= -\infty$. We
refer to \cite{golan92} as a standard reference on semirings. Many
familiar notions in ring theory (subrings, ideals, polynomials,
etc.) carry over almost word for word to semirings, although the
lack of additive negatives makes the construction of a factor
semiring much less useful. In particular, given any semiring~$R$,
we can form the semiring of polynomials $R[\la]$, whose addition
and multiplication are induced by addition and multiplication of
the coefficients, where $\a_i, \bt_j \in R$:

$$\big(\sum _i \a_i \la ^i\big)+\big(\sum _i \bt_i \la ^i\big) = \sum _{i }
(\a_i +\bt_{i}) \la^i;$$
$$\big(\sum _i \a_i \la ^i\big)\big(\sum _j \bt_j \la ^j\big) = \sum _{i+j = k}
\a_i \bt_{k-j} \la^k.$$

Here we have reverted to the usual algebraic notation, although we
are always working over $\T.$ Thus, $f = \sum _i \a_i \la ^i$
denotes $\bigoplus_{ i \in \Net} \al_i \TrP \la ^i,$ and the
substitution $f(a)$ denotes $\bigoplus_{ i \in \Net} \al_i \TrP a
^i,$ where $a^i = a \TrP \cdots \TrP a,$ taken $i$ times.

Formally iterating the polynomial construction $n$ times enables
one to define the polynomial semiring $\T[ \Lambda] = \T[\la_1,
\dots, \la _n]$ in $n$ commuting indeterminates $\la_1, \dots,
\la_n.$ Elements of $\Trop[\lm_1,\dots,\lm_n]$ are called
\textbf{tropical polynomials}. A tropical  polynomial which is a
sum of precisely two different monomials is called a \textbf{
binomial}.

Summarizing,   any tropical polynomial can be written as
\begin{equation}\label{eq:polyToFunc1} \tpF = \bigoplus_{\bfi \in
\Omega} \al_\bfi \TrP \Lm ^\bfi \ \in \
\Trop[\lm_1,\dots,\lm_n]\backslash\{-\infty\},
\end{equation}
where $\Omega\subset\Int^n$ is a finite nonempty set of $n$-tuples
$\bfi =(i_1,\dots,i_n)$ with nonnegative coordinates, $\al_\bfi
\in\T$ for all $\bfi \in\Omega$, and $\Lm ^\bfi $ stands for
$\lm_1^{i_1}\TrP\cdots\TrP \lm_n^{i_n}$.

\subsection{The upper essential polynomial
semiring}\label{sec:Introduction}

Any tropical polynomial $\tpF \in \Trop[\lm_1,\dots,\lm_n]
\setminus \{ \minf \}$ determines a piecewise linear convex
function $\tfF:\Real^{(n)} \To \Real$, defined by:
\begin{equation}\label{eq:polyToFunc2} \tfF(\bfa) \ = \ \max_{\bfi \in\Omega}\{ \langle
\bfa ,\bfi \rangle+ \al_\bfi \} ,\quad  \bfa \in\Real^{(n)}\ ,
\end{equation}
where $\langle \, \cdot , \cdot \, \rangle$ stands for the
standard scalar product. The map $\tpF\mapsto\tfF$ is not 1:1.
This map can be viewed naturally as a homomorphism of semirings,
but we do not pursue that path here; instead, we look for a
canonical polynomial representing each of these functions.

\begin{definition}\label{def:essentialPart} A polynomial $g$ is \textbf{dominated by} a polynomial $f$ if
$g(\bfa) \le f(\bfa)$ for all $\bfa \in \Trop^{(n)}.$
  Suppose $f
= \bigoplus \al _\bfi \TrP \Lm ^\bfi,$ and $h = \al_\bfj \TrP \Lm
^\bfj$ is a monomial of $f$, and write $f_h = \bigoplus _{\bfi \ne
\bfj}\al _\bfi \TrP \Lm ^\bfi.$ We say that the monomial $h$  is
\textbf{(upper) inessential} if $h$ is dominated by $f_h$ (or, in
other words, $f(\bfa) = f_h(\bfa)$ for each $\bfa \in
\Trop^{(n)}$); otherwise $h$ is said to be \textbf{(upper)
essential}. Note that  $h(\bfa) \le f(\bfa)$ for each inessential
monomial $h$ and all $\bfa \in \Trop^{(n)}$.  The \textbf{(upper)
essential part} $\tilde f$ of a polynomial
 $f = \bigoplus \al _\bfi \TrP \Lm^\bfi$ is the sum of all essential monomials
of $f$, while its \textbf{inessential part} $\uf$ consists of the
sum of all inessential monomials of $f$. When $f = \tilde f$, we
call $f$ an \textbf{essential polynomial}.
\end{definition}
\noindent (Note that any  monomial by itself, considered as a
polynomial,
 is essential.)

Using this definition we say that two polynomials $f$ and $g$ are
\textbf{essentially equivalent}, written $f \eqR g$, if $\tilde f
= \tilde g$, that is if $ f(\bfa) = g(\bfa)$ for each $\bfa \in
\Trop^{(n)}$. Clearly, $\eqR$ is an equivalence relation which,
for convenience, we call  $e$-\textbf{equivalence}. We always
consider factorization up to $e$-equivalence; in other words, we
say $g$ \textbf{divides} $f$ if $gh \eqR f$ for some polynomial
$h$. (Otherwise one could make any polynomial irreducible by
adding some inessential monomial.)

\begin{definition} The \textbf{essential polynomial semiring}, $\tTrop[\lm_1,\dots,\lm_n]$, of
 $\Trop[\lm_1,\dots,\lm_n]$ is the set of essential polynomials,
where addition and multiplication are defined by taking the
essential part of the respective sum or product in
$\Trop[\lm_1,\dots,\lm_n]$. In other words, if $\oplus$ and
$\odot$ are the respective  operations in
$\Trop[\lm_1,\dots,\lm_n]$, we define
$$f+g = \widetilde {f\oplus g}, \qquad fg = \widetilde {f\odot
g}$$ to be the corresponding operations in the essential
polynomial semiring $\tTrop[\lm_1,\dots,\lm_n]$.
\end{definition}

We identify the essential polynomial semiring
$\tTrop[\lm_1,\dots,\lm_n]$ with the isomorphic semiring of
polynomial functions $\{ \tilde f: \Trop^{(n)} \to \Trop \}$.
Abusing notation slightly, we still write elements of
$\tTrop[\lm_1, \dots, \lm _n]$ as polynomials, although strictly
speaking, they are equivalence classes of polynomials.

Since the meaning should be clear from the context, we use the
same notation, $+$ and $\cdot$ \ , for the
 operations of the essential polynomial semiring and for $\Trop[\lm_1,\dots,\lm_n]$.

The next observation shows how inessential terms often may arise.

\begin{lemma}\label{lem:essential} Assume $f = f_1 +  f_2 + f_3\in \Trop[\lm_1,\dots,\lm_n]$, where \pSkip
$$ f_1 = \lm_{1}^{j_1+k}\lm_{2}^{j_2-k} \lm_{3}^{j_3}\cdots
\lm_{m}^{j_{n}}, \qquad f_2 = \lm_{1}^{j_1}\lm_{2}^{j_2}
\lm_{3}^{j_3}\cdots \lm_{n}^{j_{n}}, \qquad f_3 =
\lm_{1}^{j_1-k}\lm_{2}^{j_2+k} \lm_{3}^{j_3} \cdots
\lm_{n}^{j_{n}},$$ are monomials and  $k \leq \min \{j_1, j_2\}$
is a natural number. Then $f_2$ is inessential for $f$.
\end{lemma}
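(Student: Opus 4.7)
My plan is to evaluate $f_1, f_2, f_3$ pointwise on $\Trop^{(n)}$, using the translation from tropical notation to ordinary arithmetic (multiplication becomes addition, exponentiation becomes multiplication by a scalar). I would then show directly that $f_2(\bfa) \le \max\{f_1(\bfa), f_3(\bfa)\}$ for every $\bfa \in \Trop^{(n)}$, which by Definition~\ref{def:essentialPart} means $f_2$ is dominated by $f_{f_2} = f_1 + f_3$ and therefore inessential in $f$.

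First, for $\bfa = (a_1, \dots, a_n) \in \Real^{(n)}$, write $C = j_3 a_3 + \cdots + j_n a_n$ (ordinary sum). The three monomials evaluate to
\begin{equation*}
f_1(\bfa) = j_1 a_1 + j_2 a_2 + C + k(a_1 - a_2), \qquad f_2(\bfa) = j_1 a_1 + j_2 a_2 + C,
\end{equation*}
\begin{equation*}
f_3(\bfa) = j_1 a_1 + j_2 a_2 + C - k(a_1 - a_2).
\end{equation*}
Setting $\delta = k(a_1 - a_2)$, one of $\delta, -\delta$ is $\ge 0$, so
\begin{equation*}
\max\{f_1(\bfa), f_3(\bfa)\} = f_2(\bfa) + \max\{\delta, -\delta\} = f_2(\bfa) + k|a_1 - a_2| \; \ge \; f_2(\bfa),
\end{equation*}
since $k \ge 1$. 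Hence $f_2(\bfa) \le (f_1 + f_3)(\bfa)$ on the interior $\Real^{(n)}$.

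The only subtlety is handling the boundary points where some coordinate equals $-\infty$; I would treat this by a short case analysis, checking that the hypothesis $k \le \min\{j_1, j_2\}$ guarantees all three monomials have the same zero pattern on $\{a_1 = -\infty\}$ and $\{a_2 = -\infty\}$ (since the exponents $j_1 \pm k$ and $j_2 \mp k$ remain nonnegative), so evaluation there either reduces to the interior argument or forces $f_2(\bfa) = -\infty$ trivially. I expect this boundary check to be the only mildly delicate step, but it is routine once the exponent constraint is observed. Combining the interior inequality with the boundary check shows $f_2 \eqR f_1 + f_3$ inside $f$, proving $f_2$ inessential.
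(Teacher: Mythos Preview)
Your proof is correct and follows essentially the same approach as the paper's: both evaluate the three monomials pointwise and compare, the paper via a three-case split on the sign of $a_1-a_2$, you via the equivalent one-line identity $\max\{f_1(\bfa),f_3(\bfa)\}=f_2(\bfa)+k\,|a_1-a_2|$. Your explicit treatment of the $-\infty$ boundary is more careful than the paper's, which silently restricts to real coordinates.
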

\begin{proof}
Pick $\bfa = (a_1,\dots, a_n) \in \Trop^{(n)}$ and assume $a_1
> a_2$; then $f(\bfa) = f_1(\bfa) > f_2(\bfa), f_3(\bfa)$.
Conversely, if $a_2 > a_1$, then $f(\bfa) = f_3 (\bfa) >
f_1(\bfa), f_2(\bfa)$. When $a_1 = a_2$, $f(\bfa) = f_1(\bfa) +
f_3(\bfa) = f_1(\bfa) + f_2(\bfa) + f_3(\bfa)$. In every case,
$f_2$  is inessential for $f$.
\end{proof}

For the next lemma, we let $\mathcal I$ denote the set of all
$m$-tuples $\bfi = (i_1, \dots, i_{m})$ for which each $0 \le i_u
<m$ and $\sum _{u=1}^{m} i_u = \binom m 2$. For any $ \bfi = (i_1,
\dots, i_{m}) \in \mathcal I$ and $0 \leq j \leq m-1$, we define
the $j$-index $\iota _j(\bfi)$ to be the number of $i_u$'s that
equal $j$; define $\iota (\bfi) = (\iota _{m-1}(\bfi), \dots,
\iota _{0}(\bfi))$.

 Let $S_m$
  denote the set of permutations of $(0,1, \dots, m-1).$   Thus, $\bfi \in S_m$ iff $\iota  (\bfi)
= (1,1,\dots, 1)$.

 We say $\bfi $ is \textbf{admissible} if for each number $k$, the
 sum of the largest $k$ components of $\bfi$ is at most $(m-1) + \dots +(m-k) = km - \frac {k(k+1)}2.$

\begin{remark} Lexicographically,  $\iota
(\bfi) \le (1,1,\dots, 1)$ for each admissible $\bfi \in \tI$.
(Indeed, the sum of the largest two components of $\bfi$ is at
most $2m-3,$ which means that at most one component is $m-1$, so
the first component of $\iota  (\bfi)$ is at most 1. We are done
unless it is 1, and conclude by induction on $m$.)
\end{remark}

 For each $\bfi\in \mathcal I,$ we define the
monomial $$h_\bfi = \la_1^{i_1}\cdots \la_{m}^{i_{m}} =
\Lm^\bfi.$$ For any permutation $\sg \in S_{m},$ we denote
$$h_\sg =  \la_1^{\sg (0)}\cdots \la_{m-1}^{\sg (m-1)}  =
\Lm^\sig.$$

\begin{lemma}\label{lem:essential2} The polynomial $p = \sum _{\sg \in S_m} h_\sg$ dominates $h_\bfi$ for each
admissible $\bfi \in \mathcal I.$
\end{lemma}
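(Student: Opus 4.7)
The plan is to verify the dominance pointwise: it suffices to find, for every $\bfa = (a_1, \dots, a_m) \in \Real^{(m)}$, a permutation $\sigma \in S_m$ with $h_\sigma(\bfa) \geq h_\bfi(\bfa)$, since the two piecewise linear functions involved are continuous and their values on $\Trop^{(m)}$ are determined by their restrictions to $\Real^{(m)}$.

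First I would carry out a symmetry reduction. A common permutation $\pi$ of the coordinates $\lm_1, \dots, \lm_m$ relabels both $\bfa$ and $\bfi$, which leaves $h_\bfi(\bfa) = \sum_u i_u a_u$ invariant and preserves the multiset of components of $\bfi$ (hence admissibility); moreover it permutes the summands of $p$ among themselves, because precomposing elements of $S_m$ with $\pi$ is a bijection of $S_m$. Thus I may assume $a_1 \geq a_2 \geq \cdots \geq a_m$. Then I would exhibit the explicit candidate $\sigma_0 \in S_m$ given by $\sigma_0(u-1) = m-u$, which pairs the largest exponent $m-1$ with the largest coordinate $a_1$, and so on, yielding $h_{\sigma_0}(\bfa) = \sum_{u=1}^m (m-u)\,a_u$.

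To compare this with $h_\bfi(\bfa)$, let $i^*_{(1)} \geq \cdots \geq i^*_{(m)}$ denote the decreasing rearrangement of $(i_1,\dots,i_m)$. The classical rearrangement inequality, applied to the decreasing sequence $(a_u)$, gives $h_\bfi(\bfa) \leq \sum_u i^*_{(u)}\,a_u$, so it remains to prove $\sum_u (m-u) a_u \geq \sum_u i^*_{(u)} a_u$. Writing $c_u = (m-u) - i^*_{(u)}$ and $C_k = c_1 + \cdots + c_k$, admissibility of $\bfi$ is \emph{exactly} the assertion $C_k \geq 0$ for $1 \leq k \leq m-1$, while the identity $\sum_u i_u = \binom{m}{2}$ built into the definition of $\mathcal{I}$ forces $C_m = 0$. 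Abel summation then gives
$$\sum_{u=1}^m c_u\, a_u \;=\; C_m a_m + \sum_{k=1}^{m-1} C_k\,(a_k - a_{k+1}) \;=\; \sum_{k=1}^{m-1} C_k\,(a_k - a_{k+1}) \;\geq\; 0,$$
since $a_k - a_{k+1} \geq 0$ and $C_k \geq 0$ for every $k$.

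The main point requiring care is the symmetry reduction---noticing that $p$ is genuinely invariant under any coordinate permutation, so sorting $\bfa$ costs nothing. After that, the rearrangement inequality plus Abel summation (equivalently, a Hardy--Littlewood--P\'olya majorization bound) closes the argument immediately, so the remainder of the proof is routine.
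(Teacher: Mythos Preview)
Your proof is correct and takes a genuinely different route from the paper's.  The paper argues by reverse induction on the lexicographic order of $\iota(\bfi)$: whenever $\bfi$ is not already a permutation, it locates two equal components $i_s = i_t = j'$ and invokes Lemma~\ref{lem:essential} to show that $h_\bfi$ is dominated by $h_{\bfi'} + h_{\bfi''}$, where $\bfi'$ and $\bfi''$ are obtained by shifting those components to $j'\pm 1$; one then has to check that $\bfi'$ and $\bfi''$ remain admissible and have strictly larger $\iota$, which is the most delicate part of the argument.  By contrast, you work pointwise and, after the symmetry reduction, exhibit a \emph{single} permutation $\sigma_0$ that dominates $h_\bfi$ at the given point.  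Your key observation is that admissibility of $\bfi$ is precisely the majorization relation $(m-1,m-2,\dots,0) \succ (i^*_{(1)},\dots,i^*_{(m)})$, after which the rearrangement inequality plus Abel summation finishes things off without any induction or auxiliary lemma.  Your argument is shorter and avoids the admissibility-preservation check; the paper's argument, on the other hand, stays entirely within the tropical ``domination of polynomials'' framework and reuses Lemma~\ref{lem:essential} as a structural building block rather than descending to real inequalities.
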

\begin{proof}  The proof is by reverse
induction on the lexicographic order of $\iota (\bfi).$ The
assertion holds by hypothesis when $\iota  (\bfi) = (1,1,\dots,
1)$. In general, if $\iota  (\bfi) <(1,1,\dots, 1),$ then some
$j$-index is 0, which implies that for some $j'<j,$ the $j'$-index
$\iota_{j'} (\bfi)\ge 2$; in other words, $\bfi$ has components $
i_s = i_t = j'$ for suitable $s \ne t$.

Take $\bfi'$ to be the $m$-tuple in which  $i_s = j'+1$ and   $i_t
= j'-1$ (with all other components the same as for $\bfi$), and
let $\bfi''$ be the $m$-tuple in which $i_s = j'-1$ and $i_t  =
j'+1$. By Lemma~\ref{lem:essential}, $h_\bfi $ is dominated by
$h_{\bfi'}+h_{\bfi''}.$

We claim that  $h_{\bfi'}$ and $h_{\bfi''}$ are admissible and
$\le (1,1,\dots, 1)$. Indeed, this is clear for $j'< j-1,$ since
then $\iota_j (\bfi')= \iota_j (\bfi'') = 0.$ Thus, we may assume
that $j' = j-1.$ By definition of admissibility, $\iota_{j-1}
(\bfi) \le 2,$ because if $\iota_{j-1} (\bfi) \ge 3,$ one would
have   the
 sum of the largest $k = m-j+2$ components of $\bfi$ would be $$(m-1) + \dots +(j+1)+0 +3(j-2),$$ which is
 greater than $ km - \frac {k(k+1)}2.$ On the other hand, by definition of $j'$, we have $\iota_{j-1}
(\bfi) \ge 2,$ so $\iota_{j-1} (\bfi) = 2.$ Since $\bfi'$
increases one of the exponent of $i_s$ from $j-1$ to $j$, we see
that $\iota_j (\bfi') =1$ and $\iota_{j-1} (\bfi')=0,$ proving
$\iota (\bfi')< (1,1, \dots, 1),$ as desired. Clearly $
\iota(\bfi'')=  \iota(\bfi'),$ since the roles of $s$ and $t$ are
interchanged, so $h_{\bfi''}$ also is admissible.

 Clearly, $\iota(\bfi') = \iota(\bfi'')$ is
of higher lexicographic order than $\iota(\bfi)$ (since
$\iota_{j'+1}(\bfi')= \iota_{j'+1}(\bfi)+1 $), so, by reverse
induction, $h_{\bfi'}$ and $h_{\bfi''}$ are both dominated by $p$,
implying $h_\bfi$ is dominated by $p$.
\end{proof}

\begin{remark}\label{lowess} Although we focus on the image of
$\tTrop[\lm_1,\dots,\lm_n]$
  under the natural map  $$\tTrop[\lm_1,\dots,\lm_n] \to \{\tilde f:
\Trop^{(n)} \To \Trop \},$$ this map loses the (upper) inessential
part of a polynomial. If we consider instead the min-plus algebra
$\dTrop=\Trop_{\min}$, in which $\max$ is replaced by $\min$, then
the essential monomials become the ones taking on minimal values;
let us call these \textbf{lower essential}. Then we get a map
$$\tdTrop[\lm_1,\dots,\lm_n]\To \{\tilde f: {\dTrop}^{(n)} \to
\dTrop \},$$ where now we define addition by taking the minimum
value instead of the maximum value, and thus
   lose the lower inessential part of each polynomial. We
preserve more information by considering both together, i.e.,
$\tTrop[\lm_1,\dots,\lm_n] \times \tdTrop[\lm_1,\dots,\lm_n]$,
viewed in $$\{\tilde f: \Trop^{(n)} \To \Trop \}\times \{\tilde f:
{\dTrop}^{(n)} \to \dTrop \}.$$ (Even so, one still loses
information such as $f_2$ in Lemma~\ref{lem:essential}.) Since
this viewpoint leads to more complicated notation, we put it aside
for the time being, but return to it later.\end{remark}

\subsection{The tropical Vandermonde matrix}

Our main tool in proving Theorem \ref{permprime} is the
\textbf{tropical Vandermonde  matrix} $V_f$ of an essential
polynomial $f = \sum_{i=1}^m f_i\in \tTrop[x_1,\dots,x_n]$, define
as the $m \times m$ matrix with entries $v_{ij} = f_i^{j-1}$.
Since the determinant is not available in tropical algebra
(because it involves negative signs), one uses the permanent
$$\per{{V_f}} = \sum_{\sig \in S_m}  f_1^{\sig(0)} \cdots  f_m^{\sig(m-1)},
$$
where $m$ denotes the number of (essential) monomials in $f$. We
can compute the permanent in two ways:

\begin{lemma}\label{lem:Van} If $V_f = (\lm_i^{j-1})$ is an $m \times m$ Vandermonde matrix
(for $f = \sum \lm_i)$, then
\begin{enumerate}
    \item $\per{V_f} \eqR
\prod_{i < j } (\lm_i + \lm_j) $ and,\pSkip
    \item $\per{V_f} \eqR (\sum_i \lm_i) (\sum_{i < j }
\lm_i \lm_j) \cdots (\sum_{i }\prod_{j \neq i} \lm_j).$
\end{enumerate}
\end{lemma}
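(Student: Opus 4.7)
The plan is to show that both products in (1) and (2) have the same essential part as $\per{V_f}=\sum_{\sigma\in S_m}h_\sigma$ by expanding each as a sum of monomials $h_\bfi$ with $\bfi\in\mathcal I$ and applying Lemma~\ref{lem:essential2}.

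For (1), I would expand $\prod_{i<j}(\lm_i+\lm_j)$ by picking either $\lm_i$ or $\lm_j$ in each factor. Such a choice is equivalent to an orientation of the edges of the complete graph on $\{1,\dots,m\}$, and the resulting monomial $h_\bfi$ has $i_u$ equal to the out-degree of vertex $u$; in particular $\bfi\in\mathcal I$. A direct check (equivalent to the classical tournament-score inequality) shows that the achievable out-degree sequences are exactly the admissible subset of $\mathcal I$, matching the bound $(m-1)+\cdots+(m-k)=km-\tfrac{k(k+1)}{2}$ on the sum of the largest $k$ components. Every permutation exponent vector arises from some transitive tournament, so each $h_\sigma$ appears in the expansion; by Lemma~\ref{lem:essential2} the remaining admissible terms are dominated by $p=\sum_\sigma h_\sigma$ and are therefore inessential, giving $\prod_{i<j}(\lm_i+\lm_j)\eqR p=\per{V_f}$.

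For (2), the right-hand side is the product of elementary symmetric polynomials $e_1e_2\cdots e_{m-1}$, where $e_k=\sum_{|S|=k}\prod_{i\in S}\lm_i$. A term in its expansion corresponds to a choice of $k$-subset $S_k\subseteq\{1,\dots,m\}$ for each $k=1,\dots,m-1$ and contributes $h_\bfi$ with $i_u=\#\{k:u\in S_k\}$; again $\bfi\in\mathcal I$. Admissibility of $\bfi$ here amounts to realizability as the column-sum vector of a $0/1$ matrix with row sums $1,2,\dots,m-1$, and the Gale--Ryser partial-sum check produces the same bound $km-\tfrac{k(k+1)}{2}$. To realize an arbitrary $h_\sigma$ explicitly, I would order the indices by their $\sigma$-values and take $S_j$ to be the $j$ indices carrying the largest $\sigma$-values, which yields the required exponent vector. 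Lemma~\ref{lem:essential2} then shows that the essential part is $p=\per{V_f}$ once more.

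The main obstacle in each part is pinning down precisely which exponent vectors in $\mathcal I$ occur in the expansion, and in both cases it reduces to the same arithmetic identity $(m-1)+\cdots+(m-k)=km-\tfrac{k(k+1)}{2}$ linking the combinatorial constraint (tournament scores in (1); Gale--Ryser row sums in (2)) to the admissibility bound of Lemma~\ref{lem:essential2}. Once this match is checked and every $h_\sigma$ is verified to be present, Lemma~\ref{lem:essential2} closes both parts simultaneously.
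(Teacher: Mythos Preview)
Your argument is correct and follows essentially the same route as the paper: expand each product as a sum of monomials $h_{\bfi}$, verify that every $\bfi$ arising is admissible, observe that every $h_\sigma$ occurs, and then invoke Lemma~\ref{lem:essential2}. The only difference is presentational: where the paper argues admissibility informally (``the extreme case is for some $\lm_i$ to have degree $m-1$, in which case the next indeterminate has degree at most $m-2$, and so forth'' for~(1), and simply ``clearly'' for~(2)), you supply the standard combinatorial certificates---tournament score sequences for~(1) and Gale--Ryser for~(2)---which make the bound $km-\tfrac{k(k+1)}{2}$ explicit; your exact characterization of the occurring $\bfi$ is more than is needed, but harmless.
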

\begin{proof}  Let $p = \per{V_f}$; then $p$ is a homogenous polynomial of degree $\frac{m(m-1)}{2}$
 in the $m$ indeterminates $\lm_1,\dots, \lm_m$. Moreover, $p$ is
 a sum of  the $m!$ monomials $h_\sg$,   each
corresponding to a single permutation $\sig \in S_m$; thus $p$ is
the polynomial of Lemma~ \ref{lem:essential2}, which says that $p$
dominates $h_\bfi$ for each admissible $\bfi \in \mathcal I.$

But it is easy to see that each monomial of   $q_1 =  \prod_{i < j
} (\lm_i + \lm_j)$ has the form $h_\bfi$ where $\bfi$ is
admissible (since the extreme case is for some  $\lm _i$ to have
degree
  $m-1$, in which case the next indeterminate has degree at
most $m-2$, and so forth), and thus $h_\bfi$  is dominated by $p$.
Since each monomial of $p$ appears in $q$, we get $p \eqR q_1.$

Likewise,  each monomial of $q_2 = \left(\sum_i \lm_i\right)
\left(\sum_{i < j } \lm_i \lm_j\right) \cdots \left(\sum_{i
}\prod_{j \neq i} \lm_j\right) $ clearly has the form $h_\bfi$
where $\bfi$ is admissible, and each monomial of $p$ appears in
$q_2$, implying $p \eqR q_2.$
\end{proof}

\subsection{Proof of Theorem \ref{permprime}}

The proof of Theorem \ref{permprime} now becomes quite
transparent:

\begin{proof} 
Specialize $\lm_i$ to $f_i$ and apply Lemma \ref{lem:Van}.
\end{proof}

 Algebraically, Theorem
\ref{permprime} shows that the  factorization of $\per{V_f} \in
\tTrop[\lm_1,\dots,\lm_n]$ into irreducible polynomials is not
unique.

\begin{example} Suppose $f= \lm_1^i + \lm_2^j + \al $, where $\al \in \Real$,  is a polynomial in $\Trop[\lm_1,\lm_2]$. Then $$V_f = \vvMat{0}{\al}{\al^2}{0}{\lm_1^i }{\lm_1^{2i}
}{0}{\lm_2^j }{\lm_2^{2j} } \quad \text{and}$$
$$\per{V_f} \eqR  (\lm_1^i + \lm_2^j + \al)(\al\lm_1^i + \al \lm_2^j + \lm_1^i  \lm_2^j) \eqR
(\lm_1^i + \lm_2^j )(\lm_1^i + \al)(\lm_2^j + \al) \  .
$$
This yields two different tropical factorizations  of $\per{V_f}$
into irreducible polynomials. (The right factorization  is a
binomial factorization.)
\end{example}

In tropical algebra, perhaps ``unique factorization'' is the wrong
emphasis, but rather we should emphasize factorization of
 $\per{V_f}$ into binomials. We pursue this avenue in the next
 section.

 We define the sums
\begin{equation}\label{eq:fnotation}
\begin{array}{lcl}
  \pSet{f}{1} & =  & \sum_{i } f_i,  \\
  \pSet{f}{2} & =  & \sum_{i < j } f_i f_j,  \\
  \pSet{f}{3} & = & \sum_{i < j < k} f_i f_j f_k, \\
  \vdots & & \vdots \\
   \pSet{f}{m-1}&  = & \sum_{i }\prod_{j \neq i} f_j, \\
                   \end{array}
\end{equation}
and write $\trn{f}$ for $ \pSet{f}{m-1}$ which we call the
\textbf{transpose polynomial} of $f$.

Also, we write $\bar f$ for $\prod_{i < j } (f_i + f_j) $. Under
this notation, one can rephrase Theorem \ref{permprime} in the
language of the essential tropical semiring.
\begin{corollary}\label{cor:permprime2} Suppose $f = \sum_{i=1}^m f_i$ is a polynomial in
$\tTrop[\lm_1,\dots,\lm_n]$ with monomials $f_i$. Then
\begin{equation}\label{eq:main}
\per{V_f}  =  \prod_{i=1}^{m-1} \pSet{f}{i} = \prod_{i < j } (f_i
+ f_j) = \bar f .
\end{equation}
\end{corollary}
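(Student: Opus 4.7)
The plan is to derive the corollary as a reformulation of Theorem \ref{permprime} and Lemma \ref{lem:Van} in the essential polynomial semiring $\tTrop[\lm_1,\dots,\lm_n]$, where the relation $\eqR$ collapses to equality by construction. Unpacking definitions, $\per{V_f} = \sum_{\sigma \in S_m} f_1^{\sigma(0)} \cdots f_m^{\sigma(m-1)}$ by the definition of the permanent, $\bar f = \prod_{i<j}(f_i + f_j)$ by definition, and the $\pSet{f}{k}$ are given in \eqref{eq:fnotation}. The rightmost equality in the statement is therefore definitional, while the middle equality $\prod_{i=1}^{m-1}\pSet{f}{i} = \prod_{i<j}(f_i+f_j)$ is just Theorem \ref{permprime} restated in $\tTrop$.

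So the substantive task is to establish $\per{V_f} = \bar f$ and $\per{V_f} = \prod_{i=1}^{m-1}\pSet{f}{i}$. I would first invoke Lemma \ref{lem:Van} with fresh indeterminates $\lm_1,\dots,\lm_m$, obtaining the two essential equivalences $\per{V_\lm} \eqR \prod_{i<j}(\lm_i+\lm_j)$ and $\per{V_\lm} \eqR \prod_{k=1}^{m-1}\pSet{\lm}{k}$ in $\Trop[\lm_1,\dots,\lm_m]$. Then I would specialize $\lm_i \mapsto f_i$ for each $i$; the substituted Vandermonde matrix is precisely $V_f = (f_i^{j-1})$, and the right-hand sides become $\bar f$ and $\prod_{k=1}^{m-1}\pSet{f}{k}$.

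The one point requiring care is that this substitution preserves $\eqR$. If $p,q \in \Trop[\lm_1,\dots,\lm_m]$ satisfy $p \eqR q$, so that they induce the same function $\Trop^{(m)} \to \Trop$, then for any $\bfa \in \Trop^{(n)}$ the values $p(f_1(\bfa),\dots,f_m(\bfa))$ and $q(f_1(\bfa),\dots,f_m(\bfa))$ coincide, so the substituted polynomials remain $\eqR$-equivalent as elements of $\Trop[\lm_1,\dots,\lm_n]$. Passing finally to $\tTrop[\lm_1,\dots,\lm_n]$, in which $\eqR$-equivalent polynomials are identified and the operations are defined by taking essential parts, all three essential equivalences become the claimed equalities. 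I do not expect any serious obstacle beyond the routine check that substitution respects essential equivalence; the real content lies in Lemma \ref{lem:Van} and Theorem \ref{permprime}, which have already been established.
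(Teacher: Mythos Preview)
Your proposal is correct and matches the paper's approach: the paper presents this corollary as a direct rephrasing of Theorem~\ref{permprime} and Lemma~\ref{lem:Van} in $\tTrop[\lm_1,\dots,\lm_n]$, where $\eqR$ becomes equality, and offers no separate argument. Your additional care in verifying that specialization $\lm_i \mapsto f_i$ preserves $\eqR$ makes explicit a step the paper leaves implicit, but the strategy is the same.
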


\begin{remark}\label{grow}
\begin{enumerate} \eroman
    \item
 Since the factorization \eqref{eq:main} is the mainstay of this
paper, let us pause for a moment to consider the degrees of the
factors, in the situation where $f$ is completely homogeneous of
degree $d$.  Note that $\deg (\bar f) = d \binom m 2$, but $\bar
f$ can be factored into a product of $\binom m 2$ homogeneous
binomials, each of degree $d$.  On the other hand, each
$\pSet{f}{k}$ is completely homogeneous, of degree $dk\binom m k,$
and need not be reducible. \pSkip

\item  $\overline{\( \bar f \)}$ need not be $\bar f$. For example, taking
$$f = (\la_1 + \la _2)(\la _1 + \la_3) = \la _1^2 + \la_1 \la _2 +
\la _1 \la _3 + \la _2 \la _3,$$ we have $$\begin{array}{ll}
                                                       \bar f & = (\la_1^2
+\la_1\la_2)(\la_1^2 +\la_1\la_3)(\la_1^2 +\la_2\la_3)(\la_1 \la_2
+\la_1\la_3)(\la_1 \la_2 +\la_2\la_3)(\la_1 \la_3 +\la_2\la_3) \\&
= \la_1^3 \la_2 \la _3 (\la_1+\la_2)^2 (\la_1+\la_3)^2
(\la_2+\la_3)(\la_1^2+\la_2\la_3).
\end{array}$$
Computing $\overline{\( \bar f \)}$, one easily sees that it has
higher degree than $f$.

\pSkip

 \item Another example where $\overline{\( \bar f \) }\ne \bar f$, even when
 $f$ is symmetric in the $\la _i$:
 For
$f = \la_1 + \la _2 + \la _3$,
$$ \bar f = (\la_1 + \la _2)(\la _1 + \la_3)(\la _2 + \la_3), $$
and a similar computation to (ii) yields $\overline{ \( \bar f \)
}$ to be a product of powers of the $\la _i$ together with
binomials of the form $\la _i + \la_j$ and $\la_i^2 + \la _j \la
_k$.

\end{enumerate}

\end{remark}

\section{Geometric interpretation}

The main definition of a tropical variety is given in
\cite{MIK07Intro}, as Section 1.2E. It is convenient for us to
work with the equivalent definition of tropical varieties in terms
of sets of roots of tropical polynomials, in the sense of
\cite[Section 2]{RST}; see also
\cite{IMS,MikhalkinBook,M4,RST,Shustin2005}.

Given a tropical polynomial $f \neq \tUniS$  in
 $\tTrop[\lm_1,\dots,\lm_n]$, we denote by $Z_{\Trop}(f)$ the set of
points $\bfa \in\T^{(n)}$, on which  the value $f(\bfa)$ either
equals $-\infty$, or is attained by at least two of the monomials
of $f$; the set $Z_\T(f)$ is called \textbf{an affine tropical
hypersurface}, whose elements are called \textbf{zeros} or
\textbf{roots} of $f$. (Considering $f$ as a tropical function,
$Z_\T(f) \cap \Real^{(n)}$ can be viewed as the domain of
non-differentiability of $f$.) Then
\begin{equation}\label{eq:multToUni}
Z_\T(fg) = Z_\T(f) \cup Z_\T(g).
\end{equation}
For a finitely generated ideal $A=\langle
\tpF_1,\dots,\tpF_s\rangle\subset\T[\lm_1,\dots,\lm_n]$, the set
$$Z_\T(A) \ = \ \bigcap_{\tpF\in
A}Z_\T(\tpF)\subset\Trop^{(n)}$$ is called \textbf{an affine
tropical (algebraic) set}. Clearly,
$Z_\T(A)=Z_\T(\tpF_1)\cap\dots\cap Z_\T(\tpF_s)$.

This definition is consistent with the definition given in
\cite{RST}, in view of \cite{MIK07Intro}, and is a natural
framework for developing the connections between algebra and
geometry of polyhedral complexes.

When $\tpF$ contains at least two monomials, the nonempty set
$Z(\tpF) = Z_\Trop(\tpF) \cap \Real^{(n)}$ is called the
\textbf{tropical hypersurface} in $\Real^{(n)}$ defined by $\tpF$.
Similarly, for a finitely set $A=\{
\tpF_1,\dots,\tpF_s\}\subset\T[x_1,\dots,x_n]$, the set $Z(A)
=Z_\T(A)\cap\Real^{(n)}$ is defined to be a \textbf{tropical
(algebraic) set in $\Real^{(n)}$}.

\begin{corollary}\label{geom} By the left side of Equation
\Ref{eq:main}, the Newton polytope (see Section \ref{nsec2} below)
of $\bar f$ is a union of hyperplanes. In $\Real^{(2)}$ this is
called a zonotope, i.e. a Minkowski sum of a set of line segments.
\end{corollary}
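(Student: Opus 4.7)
The plan is to interpret the factorization $\bar f = \prod_{i<j}(f_i+f_j)$ furnished by Equation \eqref{eq:main} through two standard mechanisms that behave well under products: on the polytope side, the Newton polytope of a product is the Minkowski sum of the Newton polytopes of the factors; on the tropical-variety side, \eqref{eq:multToUni} says that the zero set of a product is the union of the zero sets of the factors. With this in hand, the corollary is unpacked by inspecting what each binomial factor $f_i+f_j$ contributes to each picture.

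On the polytope side, I would write
$$\operatorname{Newt}(\bar f) \;=\; \sum_{i<j}\operatorname{Newt}(f_i+f_j),$$
where the right-hand side is a Minkowski sum in $\Real^{(n)}$. Since each $f_i+f_j$ is a binomial, its Newton polytope is precisely the line segment connecting the exponent vectors $\bfi$ and $\bfj$ of $f_i$ and $f_j$. Consequently $\operatorname{Newt}(\bar f)$ is a Minkowski sum of $\binom{m}{2}$ line segments in $\Real^{(n)}$, which is by definition a zonotope; specializing to $n=2$ yields the planar case named in the statement.

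For the ``union of hyperplanes'' reading, I would iterate \eqref{eq:multToUni} on the factorization of $\bar f$ to obtain
$$Z_\Trop(\bar f) \;=\; \bigcup_{i<j} Z_\Trop(f_i+f_j),$$
and note that $Z_\Trop(f_i+f_j)$ is a (tropical) affine hyperplane in $\Real^{(n)}$, being the zero set of a binomial. Thus $Z_\Trop(\bar f)$ is literally a union of tropical hyperplanes, and its combinatorial structure is dual to that of the zonotope $\operatorname{Newt}(\bar f)$: each binomial factor contributes one segment on the polytope side and one hyperplane on the variety side.

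I expect no genuine obstacle here: once \eqref{eq:main} is available, the corollary is a formal consequence of the product-to-Minkowski-sum behaviour of Newton polytopes, together with the product-to-union identity \eqref{eq:multToUni}. The only interpretive point is matching the phrase ``union of hyperplanes'' in the statement to its natural algebraic dual, the zonotope Newton polytope; both pictures fall out simultaneously from the same binomial factorization.
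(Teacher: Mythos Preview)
Your proposal is correct and matches the paper's approach: the paper supplies no formal proof for this corollary, only the immediate gloss that ``any tropical hypersurface is contained in a union of hyperplanes,'' which is precisely your variety-side reading via \eqref{eq:multToUni}, while the zonotope assertion is exactly the Newton-polytope-of-a-product-equals-Minkowski-sum fact you invoke on the binomial factors. There is nothing to add.
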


In other words, Theorem \ref{permprime} says any tropical
hypersurface is contained in a union of hyperplanes, which in turn
can be decomposed into a union of hypersurfaces of the polynomials
on the right side of Equation~\Ref{eq:main}. This leads us to view
$\bar f$ as some sort of closure of $f$. But, in view of
Remark~\ref{grow}(iii), this process could continue further, so we
would rather take the polynomial which is the minimal product of
binomials that is $e$-divisible by $f$, and we call it the
\textbf{reduced closure} of $f$. To achieve this, we factor out
all binomials first. In other words, writing $f = gh$ where $g$ is
the product of the binomial factors of~$f$, the reduced closure is
$g \bar h.$

\section{Applications in tropical geometry}
 A \textbf{tropical set} is a finite
union of convex closed rational (i.e. defined over $\Rati$)
polyhedra.  The \textbf{dimension} of a tropical set is the
maximum of the dimensions of these polyhedra.    (One can also
show that all finite unions of convex closed rational polyhedra of
positive codimension are tropical sets.)

A  face of a polyhedral complex is \textbf{top-dimensional} if it
has maximal dimension (with respect to all other faces).
 A finite polyhedral complex is said to be of \textbf{pure
dimension} $k$ if each of its faces of dimension $< k$ is
contained in a top-dimensional face. Conversely, we say that a
face is \textbf{bottom-dimensional} if it has minimal dimension
(with respect to all other faces).

A tropical hypersurface $\Sur$ in $\Real^{(n)}$ is then a finite
rational polyhedral complex of pure dimension $(n-1)$ where its
the top-dimensional faces $\delta$ are equipped with positive
integral \textbf{weights} $m(\delta)$ so that, for each
$(n-2)$-dimensional face $\sigma$ of $\Sur$   the following
condition is satisfied, which is called the \textbf{balancing
condition} (written using the standard operations):
\begin{equation}\label{eq:balanceCond} \sum_{\sigma\subset\delta}m(\delta)n_\sigma(\delta)=0\
,\end{equation}
where $\delta$ runs over all $(n-1)$-dimensional faces of $\Sur$
containing $\sigma$, and $n_\sigma(\delta)$ is the primitive
integral normal vector to $\sigma$ lying in the cone centered at
$\sigma$ and directed by $\delta$ \cite{Einsiedler8311,RST}. The
weight, $m(\dl)$, of a face $\dl$ is also called the
\textbf{multiplicity} of $\dl$.

In general, we define a \textbf{$k$-dimensional tropical variety
in $\Real^{(n)}$} as a finite rational polyhedral complex of pure
dimension $k$, whose top-dimensional faces are equipped with
positive integral weights and satisfy condition
(\ref{eq:balanceCond}) for each face of codimension $1$. (This
definition is given in the sense of \cite{MIK07Intro}, which
includes that of \cite{RST}.)

\begin{definition}\label{def:semidisjoint}
Let $\tS = \{S_i \subset \Trop^{(n)} : i \in I \subset \Net \}$,
be a finite collection of tropical sets, and let $S_J = \bigcap_{j
\in J} S_j$, $J \subseteq I$. Denoting by $\delta_j$ the face of
maximal dimension in $S_j$, $j\in J$, containing $S_J$, we say
that $\tS$ is \textbf{semidisjoint} if for any $J \subseteq I$,
$\dim S_J < \delta_j$ for each $j \in J$. We denote the
semidisjoint union by $\semiDU$ \; .
\end{definition}
\noindent Clearly, a disjoint collection of tropical sets  is
semidisjoint.

In order to distinguish between the standard notation of union and
equality of sets to that which include weights we define:
\begin{definition} Two tropical varieties  $\Var \subset \Real^{(n)}$ and $\Var' \subset  \Real^{(n)}$ are
said to be  \textbf{weighted equal}, denoted $\Var \eqM \Var'$, if
they are identical as sets and each of their corresponding top
dimensional faces has the same weight. The \textbf{weighted union}
of tropical varieties $U \subset \Real^{(n)}$ and $U' \subset
\Real^{(n)}$, denoted $U \multU U'$, is defined to be $U \cup U'$
where the weight of a top-dimensional face $\dl$ is the sum of the
weights of the faces in $U$ and $U'$ that comprise $\dl$.
\end{definition}
 This definition of the weighted union satisfies
additivity under union, as well as   the balancing condition
\Ref{eq:balanceCond}.  With the definition we also have the
relation $$Z(fg) = Z(f) \multU Z(g),$$ for any  $f,g \in
\tTrop[\lm_1, \dots, \lm_n]$.

Analogously, we define the semidisjoint union with multiplicity.

\begin{example}
If $f \in \tTrop[\lm_1, \dots, \lm_n]$, then $Z(f) = Z(f^m)$ but
$Z(f) \neqM Z(f^m)$.
\end{example}

For the rest of this paper we only consider tropical varieties
that are also tropical algebraic sets; namely they can be written
as  $\Var = \bigcap \Sur_i$, i.e. a complete intersection, where
the $\Sur_i$ are tropical hypersurfaces. Moreover, all tropical
hypersurfaces are considered as tropical varieties, i.e. equipped
with weights.

\subsection{Tropical primitives}

\begin{definition}\label{def:primitive}
 A $k$-dimensional tropical variety of one face is called a \textbf{$k$-dimensional tropical primitive},
 or tropical primitive, for short, when the variety is a hypersurface.
\end{definition} \noindent Namely, a $k$-dimensional tropical primitive
is a degenerate tropical variety, which  in the classical sense is
simply a $k$-dimensional plane having rational slopes. One can
easily see that a $k$-dimensional tropical primitive is an
intersection of tropical primitives. By definition,   any
collection of different primitive hypersurfaces must be
semidisjoint.

\begin{remark}\label{rmk:prmTobino} Any $k$-dimensional tropical
 primitive $\Prm \subset \Real^{(n)}$ is a tropical
variety $Z(A)$, for which $A = \langle p_1, \dots, p_1 \rangle$ is
an ideal generated by tropical binomials. Writing $\Prm = \bigcap
\Prm_i$, with each $\Prm_i$   an $(n-1)$-dimensional tropical
primitive with rational slopes, say $\frac{t_1}{s_1}, \dots,
\frac{t_n}{s_n}$ with each $t_i, s_i \in \Net$, we can define the
binomial
$$p_i = \al_\bft \lm_1^{t_1} \cdots \lm_n^{t_n} + \al_\bfs \lm_1^{s_1} \cdots
\lm_n^{s_n}, \ \ \al_\bfs, \al_\bft \in \Real , $$ to get $Z(p_i)
= \Prm_i$.
\end{remark}

We say that a $k$-dimensional tropical variety is \textbf{generic}
if it does not have two or more top-dimensional faces contained in
some tropical primitive of dimension $k$. A tropical variety is
called \textbf{reducible} if it is a weighted union of tropical
varieties; otherwise is it called \textbf{irreducible}. In
particular, when $\Sur = Z(f)$ is a reducible hypersurface, then
$\Sur = Z(g) \multU Z(h)$ and $f = gh$ for some polynomials $g$
and $h$ in $\tTrop[\lm_1,\dots,\lm_n] $ (cf. \Ref{eq:multToUni}).
When $\Var = \Var' \multU \Prm$ is a tropical variety and $\Prm$
is some $k$-dimensional tropical primitive, we say $\Sur$ is
\textbf{primitively reducible}, otherwise $\Var$ is called
\textbf{primitively irreducible}.

\begin{lemma}\label{lem:irrInPrm} Any non-primitive tropical
hypersurface $\Sur \subset \Real^{(n)}$ which contains a tropical
primitive $\Prm$ is primitively reducible.
\end{lemma}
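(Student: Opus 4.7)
The plan is to show that the binomial defining $\Prm$ essentially divides the polynomial defining $\Sur$, which then exhibits $\Sur$ as a weighted union of the form $Z(g) \multU \Prm$.

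First, write $\Sur = Z(f)$ for some $f \in \tTrop[\lm_1,\dots,\lm_n]$, and invoke Remark~\ref{rmk:prmTobino} to realize $\Prm = Z(p)$ for a tropical binomial $p = \al_\bft \lm_1^{t_1} \cdots \lm_n^{t_n} + \al_\bfs \lm_1^{s_1} \cdots \lm_n^{s_n}$. The set-theoretic inclusion $\Prm \subseteq \Sur$ then says that at every point where the two monomials of $p$ tie for the maximum, at least two monomials of $f$ also tie for the maximum of $f(\bfa)$.

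The key step is to establish that $p \rDiv f$, i.e., $f \eqR gp$ for some $g \in \tTrop[\lm_1,\dots,\lm_n]$. A convenient reduction is to perform a tropical monomial substitution that brings $p$ to the simpler form $\lm_1 + \al$, making $\Prm$ the coordinate hyperplane $\{\lm_1 = \al\}$. Decomposing $f$ as $\sum_i \lm_1^i f_i$ with $f_i \in \tTrop[\lm_2,\dots,\lm_n]$, the vanishing hypothesis on $\Prm$ guarantees that for each specialization $\bfa' = (a_2,\dots,a_n)$, the resulting univariate tropical polynomial in $\lm_1$ has $\al$ as a tropical root. A tropical Horner-type recursion then produces coefficients $g_i$ of the quotient $g = \sum_i \lm_1^i g_i$, and one verifies $gp \eqR f$ by checking on $\Real^{(n)}$ that the essential monomials on each side match.

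Once $f \eqR gp$ is in hand, we invoke the identity $Z(gp) = Z(g) \multU Z(p)$ from the preceding subsection to obtain $\Sur = Z(g) \multU \Prm$. Because $\Sur$ is assumed non-primitive while $\Prm$ is primitive, $\Sur \ne \Prm$, so $Z(g)$ is a nontrivial tropical variety and this decomposition genuinely exhibits $\Sur$ as primitively reducible. The main obstacle is the tropical divisibility claim itself: the implication from $Z(p) \subseteq Z(f)$ to $p \rDiv f$ is a binomial Nullstellensatz that requires care because the essential equivalence $\eqR$ is coarser than equality in $\Trop[\lm_1,\dots,\lm_n]$. One must ensure that the quotient $g$ produced by the Horner recursion actually lies in $\tTrop[\lm_1,\dots,\lm_n]$ and that the product $gp$ essentially recovers $f$ as a function on all of $\Real^{(n)}$, not merely on $\Prm$.
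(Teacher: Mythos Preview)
Your route is genuinely different from the paper's. The paper never passes through polynomials at all: assuming $\Prm$ has weight $m$, every top-dimensional face of $\Sur$ lying on $\Prm$ has weight $\ge m$; at each codimension-one face $\sigma\subset\Prm$ the two adjacent top faces $\delta,\delta'\subset\Prm$ satisfy $n_\sigma(\delta)=-n_\sigma(\delta')$, so lowering all those weights by $m$ leaves the balancing condition~\eqref{eq:balanceCond} intact, and $\Sur\mMinus\Prm$ is again a tropical hypersurface. That is the entire proof. Your algebraic route, by contrast, aims for an explicit factorization $f\eqR gp$, something the paper only records afterward as the algebraic meaning of the geometric extraction; so your approach would yield strictly more if completed.

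The gap is precisely the step you flag: the implication $Z(p)\subseteq Z(f)\Rightarrow p\rDiv f$ is the whole content, and the proposal does not prove it. Two concrete points. First, a tropical ``Horner recursion'' is awkward because there is no subtraction to run the recursion; the clean way to define the quotient is as a function, $g(\bfa):=f(\bfa)-p(\bfa)$ in classical arithmetic, after which $gp\eqR f$ is tautological and the only issue is showing $g$ is convex (hence a tropical Laurent polynomial function). Convexity away from $\{\lm_1=\al\}$ is immediate; across that hyperplane it is exactly where the hypothesis enters, since any top face of $Z(f)$ contained in $\{\lm_1=\al\}$ is dual to an edge of the Newton subdivision parallel to $e_1$, so $f$ has a $\lm_1$-kink of size $\ge 1$ along $\Prm$ and removing the size-$1$ kink of $p$ preserves convexity. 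Second, the monomial substitution bringing $p$ to $\lm_1+\al$ generally needs a $GL_n(\Int)$ change of variables (so Laurent polynomials) and only lands on a linear binomial when $\Prm$ has weight~$1$; that is harmless here, since splitting off a weight-$1$ primitive already exhibits $\Sur$ as primitively reducible.
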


\begin{proof} Assume $\Prm$ is of
weight $m$.  Since $\Sur$  contains $\Prm$, then all of its top
dimensional faces which lying over $\Prm$ have weight $\geq m$
(not necessarily all of the same weight).  For any
$(n-2)$-dimensional face $\sig$ of $\Sur$ contained in $\Prm$,
  there are $(n-1)$-dimensional faces $\dl , \dl' \subset \Prm
\cap \Sur$ whose intersection is $\sig$, i.e.   $\dl \cap \dl' =
\sig$, and for which the balancing condition \Ref{eq:balanceCond}
is satisfied. In particular, $n_\sig(\dl) = - n_\sig(\dl')$ and
$m(\dl), m(\dl') \geq m$.   Reducing $m(\dl)$ and $m(\dl)$ by $m$,
Condition \Ref{eq:balanceCond} is still satisfied for all $\sig
\subset \Prm$, so we can erase $\Prm$ from $\Sur$ and denote the
result as $\Sur \mMinus \Prm$, which remains a tropical
hypersurface. (Note that some faces of $\Sur$ which lie on $\Prm$
might exist also in $\Sur \mMinus \Prm$, but with lower weights.)
\end{proof}
We call the procedure described in the proof \textbf{extracting a
primitive from} $\Sur$ and denote it $\Sur \mMinus \Prm$. (When
all the top-dimensional faces of $\Sur$ on $\Prm$  are of weight
$m$, equal to the multiplicity of $P$,  then $\Sur \mMinus \Prm =
\Sur \setminus \Prm$.) In view of Remark \ref{rmk:prmTobino},
assuming $\Sur = Z(f)$, extracting a primitive from $\Sur$ is
equivalent to cancelling a
 binomial factor from $f$.

Given a tropical hypersurface $\Sur$, we define the procedure of
\textbf{primitive reduction} by discarding  sequentially all the
possible primitives from $\Sur$, and call the result, $\rSur$, the
\textbf{reduced tropical hypersurface} of $\Sur$. (By
construction, the    primitive reduction procedure is independent
of the order of extraction, and thus is canonically defined.)
Accordingly, we say that two hypersurfaces $\Sur$ and $\Sur'$ are
 \textbf{equal modulo primitives} if their reductions are identical.

\begin{remark} From
a more general algebraic point of view, we could define the
semiring
$$ \tTrop\langle\langle\lm_1,   \dots, \lm_n\rangle\rangle,$$
whose elements are formal sums
$$\sum \al_\bfi \lm_1^{i_1} \cdots
\lm_n^{i_n}, \qquad \al_\bfi \in \Real, \ \  i_1, \dots, i_n \in
\mathbb Q,$$
where addition and multiplication are just as with polynomials.
When we substitute $a \in \Real$ tropically for~ $\lm$ in the
monomial $\lm ^{m/n},$ using the standard notation, we just take
$\frac{m}{n}a.$

A binomial $p = \al_\bfs \lm_1^{s_1} \cdots \lm_n^{s_n}+\al_\bft
\lm_1^{t_1} \cdots \lm_n^{t_n}  ,$ where now the $s_i,t_i \in
\mathbb Q,$  can be rewritten (tropically) as $$\(\frac
{\al_\bft}{\al_\bfs} \lm _1^{t_1-s_1}\cdots
\lm_n^{t_n-s_n}+0\)\al_\bfs \lm_1^{s_1} \cdots \lm_n^{s_n}.$$

Cancelling out the monomial on the right, we obtain a binomial of
the form $\al_\bfs \lm_1^{s_1} \cdots \lm_n^{s_n}+0$, which we say
has  \textbf{normal form}.

Given any binomial  $p = \al_\bfs \lm_1^{s_1} \cdots
\lm_n^{s_n}+0$ of normal form, we delete those $\lm _i$ for which
$s_i = 0,$ and thus  rewrite $p$ as $\al_\bfs \lm_1^{s_1} \cdots
\lm_m^{s_m}+0$, where $s_m \ne 0.$ The algebraic analog of
extracting a primitive is to take the semiring obtained by
identifying the two monomials of any binomial $p$. In order to do
this, we replace $\lm_m$ by $(\al_\bfs)^{-1/s_m}
\lm_1^{-s_1/s_m}\cdots \lm_{m-1}^{-s_{m-1}/s_m}.$ Performing this
elimination process the same way that one reduces indeterminates
in linear algebra via Gauss-Jordan elimination, effectively
reduces the number of indeterminates.

We believe that this is the ``correct'' way to view  tropical
geometry in terms of polynomials. \end{remark}

\begin{example} Let $f = \lm_1^2 + \lm_1 \lm_2 + \lm_1 + \lm_2 +
0 = (\lm_1 + \lm_2 + 0) (\lm_1 +0)$. Extracting a primitive from
$\Sur = Z(f)$, corresponds to cancelling the binomial factor $p =
\lm_1 +0$ from $f$ to get $\rSur = Z(\lm_1 + \lm_2 + 0)$.
\end{example}

Similarly, the reduction $\rVar$ of a $k$-dimensional tropical
variety $\Var$ is obtained by discarding all possible primitive of
dimension $k$, and
\begin{equation}\label{eq:modPrim}
    \Var  \equiv \Var' \text{ modulo primitives }  \IFF \rVar =
    \rVar'.
\end{equation}
Namely, each reduced tropical variety stands for a class of
varieties. (Note that $\rVar = \bigcap \rSur_i$ is not the
reduction of the tropical variety $\Var = \bigcap \Sur_i$.)
\begin{remark}\label{rmk:irrVar}
In view of Lemma \ref{lem:irrInPrm},  $\red{\Var} = \Var$ for any
irreducible, or primitively irreducible, tropical variety $\Var =
\bigcap \Sur_i$ .
\end{remark}

\begin{definition}\label{def:primitiveCovering}
A \textbf{primitive covering} of a tropical set $\Set \subset
\Real^{(n)}$ is a finite collection of $k$-dimensional  tropical
primitives $\tP(\Set) = \{ \Prm_i : \Prm_i \subset \Real^{(n)} \}$
whose union contains $S$. Denoting the cardinality of $\tP(\Set)$,
counting multiplicities,  by $|\tP(\Set)|$, we say that
$\tP(\Set)$ is a \textbf{minimal covering} of $\Set $ if
$|\tP(\Set)|$ is minimal over all the possible covers of $S$.
\end{definition}
\noindent (A primitive cover may contain overlapping primitives;
in this case the primitives are counted with their
multiplicities.)

Clearly, any tropical set $\Set \subset \Real^{(n)}$ has a
primitive cover, where $|\tP(\Set)| \leq$ the sum of all the
multiplicities of faces of  $\Set$. For a $k$-dimensional tropical
variety $\Var$, this upper bound  can be reduced to
$$|\tP(\Var)| \leq  \sum_\dl  m(\dl),$$
(the operations here are the standard operations) where $\dl$ runs
over all the $k$-dimensional faces of~$\Var$. Yet, this naive
upper bound often can be reduced much further.
%

\subsection{Starred  varieties} Among tropical varieties  we identify
a special family with a nice behavior which is much easier to
analyze.

\begin{definition}\label{def:starred }  A tropical variety $\Var \subset \Real^{(n)}$ is called \textbf{starred } if
it has a single    bottom-dimensional face.
\end{definition}
\noindent Accordingly, a tropical variety that has a primitive
cover, all of whose elements intersect at a single face, is
starred. (This definition  also includes cases in which varieties,
or hypersurfaces, do not have a proper $0$-dimensional face; for
example  $\Sur = Z(f)$, with $f = \lm_1 + 0$ in $\tTrop[\lm_1,
\lm_2]$, is starred of bottom-dimension~1.)

\begin{example} The following are straightforward  examples of  starred  varieties  in $\Real^{(n)}$:
\begin{enumerate}
    \item A tropical hyperplane (thereby permitting one to use starred varieties to answer questions raised
    in Passare's talk cited above), \pSkip
    \item A tropical primitive, \pSkip
    \item A tropical curve having a single
    vertex, \pSkip
    \item Example \ref{exm:1} below. \pSkip
\end{enumerate}
\end{example}

Locally, any tropical algebraic set $\Set \subset \Real^{(n)}$ is
a starred  variety. When a local neighborhood contains points of
only
 one face of $\Set$, then, locally, $S$ is a tropical primitive.

\begin{lemma}\label{lem:sSurTosVar}
Any tropical $k$-dimensional starred  variety $\Var = \bigcap
\Sur_i$ is the intersection of tropical starred  hypersurfaces.
\end{lemma}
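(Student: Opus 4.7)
The plan is to replace each hypersurface $\Sur_i$ in the given presentation $\Var=\bigcap \Sur_i$ by its \textbf{star} at the bottom-dimensional face of $\Var$, and show that this star is itself a starred tropical hypersurface and that intersecting the stars recovers $\Var$. Let $v$ denote the unique bottom-dimensional face of $\Var$. Since $\Var\subseteq\Sur_i$ for each $i$, the face $v$ lies in $\Sur_i$. Pick a small neighborhood $U$ of (a relatively interior point of) $v$; inside $U$ every top-dimensional face of $\Sur_i$ that meets $v$ looks like an affine half-space emanating from $v$. Define $\Sur_i^\star$ to be the polyhedral complex obtained by replacing each such face by the full affine cone from $v$ in its direction, keeping the same multiplicity, and discarding all faces of $\Sur_i$ that do not meet $v$.

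First I would check that $\Sur_i^\star$ is a tropical hypersurface in the sense of \eqref{eq:balanceCond}. Rationality and finiteness are clear from the construction. For the balancing condition, the key point is that $\Sur_i^\star$ is, by construction, invariant under the homothety fixing $v$; hence the local combinatorial picture at any point of the relative interior of an $(n-2)$-dimensional face of $\Sur_i^\star$ agrees (after translation) with the local picture near $v$ in $\Sur_i$, where balancing was already known to hold. This shows $\Sur_i^\star$ is balanced and of pure dimension $n-1$, and by construction it has a single bottom-dimensional face (namely $v$), so it is starred.

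Next I would show $\bigcap_i \Sur_i^\star = \Var$ as weighted sets. For $\Var \subseteq \bigcap_i \Sur_i^\star$: near $v$ each top-dimensional face $\dl$ of $\Var$ is contained in a top-dimensional face $\dl_i$ of $\Sur_i$ through $v$, and both $\dl$ and $\dl_i^\star$ are cones from $v$, so $\dl \subseteq \dl_i^\star \subseteq \Sur_i^\star$. For the reverse inclusion, take $x\in \bigcap_i \Sur_i^\star$. Since each $\Sur_i^\star$ is conical from $v$, the entire ray from $v$ through $x$ lies in each $\Sur_i^\star$; sufficiently close to $v$ this ray lies in $\Sur_i$, and hence in $\bigcap_i \Sur_i = \Var$. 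Because $\Var$ is itself starred at $v$, the ray then lies entirely in $\Var$, giving $x\in\Var$. The weight matching follows from the fact that both sides are cones from $v$ whose local combinatorics near $v$ agree.

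The main obstacle I expect is the balancing-and-weight bookkeeping: one has to be careful that the star construction, when several faces of $\Sur_i$ meeting at $v$ would project to the same affine cone, correctly sums their multiplicities (using $\multU$), and that the homothety-invariance argument really forces the balancing condition everywhere rather than only at $v$. The case in which $v$ has positive dimension is handled identically, after replacing ``cone from a point'' by ``cone from the affine span of $v$'' and working modulo translations along $v$.
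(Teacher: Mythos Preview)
Your star construction $H_i^\star$ is exactly what the paper builds, only phrased geometrically rather than algebraically: the paper writes $H_i=Z(f_i)$ and replaces $f_i$ by the sum $g_i$ of just those monomials that determine the top-dimensional faces of $H_i$ through the bottom face $\tau$ (equivalently, the monomials attaining the maximum along~$\tau$), and $Z(g_i)$ coincides with your $H_i^\star$. The algebraic route is shorter because $Z(g_i)$ is automatically a balanced tropical hypersurface, so your homothety argument for \eqref{eq:balanceCond} is not needed; on the other hand the paper does not spell out why $\bigcap_i Z(g_i)$ still equals $W$, so your explicit two-inclusion argument actually fills in a step the paper leaves implicit.

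One point in that argument deserves a line of justification: you use twice (once to say the top faces $\delta$ of $W$ are cones from $v$, and once to extend a short ray in $W$ to a full ray) that a starred variety is a fan over its bottom face. This is true---if some face $\sigma$ of $W$ were not a cone over $v$, it would have a proper face not containing $v$, and that face would in turn contain a second minimal face of the complex, contradicting uniqueness---but since ``starred'' is defined in the paper merely as having a unique bottom-dimensional face, the implication is worth making explicit.
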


\begin{proof}
Let $\tau$ be the single bottom-dimension face of $\Var = \bigcap
\Sur_i$, where $\Sur_i = Z(f_i)$. Then, $\tau \subset \bigcap_j
\dl_{i,j}$, where $\dl_{i,j}$ are   top-dimensional faces of
$\Sur_i$, each determined  by a pair of monomials $f_{i,\bfj }$
and $f_{i,\bfk }$ of $f_i$. In case one of the $\Sur_i$ is not
starred , one can replace it by the
 hypersurface determined by the pairs of monomials corresponding
 to the top-dimensional faces $\dl_{i,j}$
 (and discarding all the other monomials of~$f_i$).
\end{proof}

\subsection{Tropical hypersurfaces and subdivisions of their Newton polytopes}\label{nsec2} The convex
hull~$\Delta$ of the set $\Omega$ in the formula
\Ref{eq:polyToFunc1} (or, equivalently, in formula
\Ref{eq:polyToFunc2}) for a tropical polynomial $\tpF$ is called
the \textbf{Newton polytope of~$\tpF$}. The Legendre dual to
$\tpF$ is a convex piece-wise linear function
$\nu_{\tpF}:\Delta\to\Real$, whose maximal linear domains form a
subdivision
\begin{equation}\label{eq:polySub} S({\tpF})\ :\
\Delta=\Delta_1\cup\dots\cup\Delta_N
\end{equation}
into convex lattice polytopes of dimension
$\dim\Delta_i=\dim\Delta$, $i=1,\dots,N$. The vertices of the
subdivision $S({\tpF})$ bijectively correspond to the essential
monomials of $\tpF$; in particular, the vertices of $\Delta$
always correspond to essential monomials of $\tpF$. A subdivision
$S(f)$ is called an \textbf{empty subdivision} if it has no
interior vertices, i.e. vertices which are not vertices of
$\Delta$.

There is the following combinatorial duality,  between the finite
polyhedral complexes which inverts the incidence relation:
$\Delta$, covered by the faces of the subdivision $S({\tpF})$, and
 $\Real^{(n)}$, covered by the faces of the hypersurface $Z(\tpF)$ and
by the closures of the components of  $\Real^{(n)}\backslash
Z(\tpF)$.


 Namely: \pSkip
\renewcommand{\labelenumi}{(\alph{enumi})}
\begin{enumerate}
\item The vertices of $S({\tpF})$ are in one-to-one correspondence
with the components of $\Real^{(n)}\backslash Z(\tpF)$, so that
the vertices of $S({\tpF})$ on $\partial\Delta$ correspond to
unbounded components, and the other vertices of $S({\tpF})$
correspond to bounded components; \pSkip

\item A $k$-dimensional face of $S({\tpF})$, $k\ge 1$, corresponds
to an $(n-k)$-dimensional face of $Z(\tpF)$, and they are
orthogonal to each other. \pSkip
  \end{enumerate}
  \renewcommand{\labelenumi}{(\arabic{enumi})}

A tropical hypersurface $Z(\tpF)$ considered as a tropical variety
(i.e., equipped with weights) determines the Newton polytope
$\Delta$ and its subdivision $S({\tpF})$ uniquely, up to
translation in $\Real^{(n)}$, and determines the essential part
$\tilde f$ (i.e., the sum of the essential monomials) of the
tropical polynomial $\tpF$ up to multiplication by a monomial;
therefore,
$$S(f) = S(\tilde f).$$
On the other hand, as a polynomial, $f$ determines the Newton
polytope uniquely. Without weights, $Z(\tpF)$ determines the
combinatorial type of $\Delta$ and of its subdivision, together
with the slopes of all the faces of $S({\tpF})$.

\begin{note}\label{nt:polytopeOfBinomial}
Accordingly:
\begin{enumerate}
    \item Given a polynomial $f\in \tTrop[\lm_1,\dots, \lm_n]$ whose tropical
    hypersurface
    $Z(f)$ is of bottom dimension $k$, then its Newton polytope
    $\Delta$ is of dimension $n-k$. \pSkip

    \item  The Newton polytope $\Delta$ of an essential binomial
$p \in \tTrop[\lm_1,\dots, \lm_n]$ is simply a line segment  in
$\Real^{(n)}$ with empty subdivision $S(p)$. \pSkip

 \item  $Z(f)$ is starred  iff the subdivision $S(f)$ of the Newton
polytope $\Delta$   of $f$ is empty. \pSkip

\item If $\Delta$ has empty subdivision $S(f)$ and there exists
$(n-1)$-plane cut $\pi$ of $\Dl$ where all the $1$-dimensional
faces intersecting transversally with $\pi$ are parallel, then $Z(
f)$ contain a primitive. \pSkip

\end{enumerate}

\end{note}

Abusing language slightly, for a tropical hypersurface $\Sur =
Z(f)$, we sometimes say that $\Delta$ is the Newton polytope of
$\Sur$, and their faces are said to be dual in the sense described
above.

One approach to define the weights $m(\dl)$ of the top-dimensional
faces $\dl$ of a tropical hypersurface is by taking the integral
lengths of their dual one-dimensional faces in the subdivision of
the corresponding Newton polytope. For $(n-1)$-dimensional faces
these integral lengths, which are equal to the number of lattice
points  on the dual faces minus 1, and satisfy the balancing
condition~\Ref{eq:balanceCond}.

\begin{remark}\label{rmk:muti-dual}
This setting, in which weights are defined using integral lengths,
is canonical. Namely, the weights of the top-dimensional faces of
a tropical hypersurface $\Sur$ are independent of its polynomial
description; that is, even if $\Sur = Z(f) = Z(g)$, where $f \neq
g$, yet each top-dimensional face $\dl \subset \Sur$ has the same
weight. (Note that $f$ and $g$ need not to be $e$-equivalent.)

For example, take a polynomial $f = gh$, where $h$ is a monomial.
Then, $f \neqR g$.  On the other hand, $Z(f) = Z(g)$,  which
implies that the Newton polytope $\Dl$ of $f$ is an integral
linear translation of $\Dl'$, the Newton polytope of $g$; thus,
both $\Dl$ and $\Dl'$ determine the same weights for the
top-dimensional faces of $Z(f)$.
\end{remark}

Having the same weight setting as in Remark \ref{rmk:muti-dual},
let $\Prm = Z(p)$ be a tropical primitive, where
  $p \in \tTrop[\lm_1, \dots,\lm_n]$ is an essential binomial.  Assume that $p$ is rewritten as
  $$p =
\al_\bfk \Lm^\bfk (\al_\bfi \Lm^\bfi + \al_\bfj \Lm^\bfj)^m$$
with maximal possible $m \in \Net$, then the weight of $\Prm$
equals  $m$.

\begin{example} Recall the Frobenius rule $f^m = \sum_i f_i^m$,  for any polynomial $f= \sum_i f_i$
with monomials $f_i$ in $\tTrop[\lm_1,\dots, \lm_n]$, cf.
\cite[Theorem 2.40]{zur05Nullstellensatz}. Let $p =
\lm_1^m\lm_2^{m+j} + \lm_2^j$, with $m,j \in \Net$. Then
$$ p = \lm_2^{j} ( \lm_1^m\lm_2^m + 0) = \lm_2^{j}( \lm_1\lm_2 + 0)^m,$$
and thus $p$ has multiplicity $m$.

\end{example}
Therefore, any primitive cover can formed as a union of tropical
primitives, each  of multiplicity ~$m(P)$. Accordingly, we can
write $$|\tP(\Var)| = \sum_\Prm m(\Prm),$$ where $\Prm = Z(p)$
runs over all the primitives of $\tP(\Var)$  and $m(\Prm)$ are
their multiplicities as defined above.

\subsection{\Supplement s of tropical varieties}

\begin{definition}
A \textbf{\supplement } of a $k$-dimensional tropical variety
$\Var$ is a tropical variety $\comp{\Var}$ of dimension  $k$,
whose weighted  union with $\Var$ produces a primitive cover of
$\Var$, denoted by $\comp{\tP(\Var)}$, i.e.
\begin{equation}\label{eq:comp}
 \Var \  \MultU \ \comp{\Var} \  \eqM \ \comp{\tP(\Var)},
\end{equation}
is called the \textbf{\completion } of $\Var$.
  The \supplement\ (resp. completion) is said to be a
\textbf{pure \supplement} (resp. completion) when the weighted
union is a semidisjoint weighted union. We say that a \supplement\
(resp. \completion) is minimal when $ | \comp{\tP(\Var)} | $ is
minimal possible.
\end{definition}

Since the union is a weighted union, a primitive cover by itself
cannot be the \supplement\ of a tropical variety unless it is a
union of primitives. Conversely, the minimal \supplement\ of a
tropical primitive is the empty set.

Note that the \supplement\ of a $k$-dimensional tropical variety
$\Var \subset \Real^{(n)}$ is not its set-theoretic complement  in
the primitive cover $\tP(\Var)$, since the two sets are not
disjoint. In fact, $\Var \cap \comp{\Var} \neq \emptyset$ is a
collection of  faces of dimension $\leq k$.

 As will be seen later, the \supplement\ of a tropical
hypersurface $\Sur$ need not be of the same type as that of
$\Sur$. For example, the \supplement\ of a tropical hyperplane is
not a hyperplane. Moreover, the \supplement\ of an irreducible
hypersurface may be reducible,  also they might have different
combinatorial types.

\begin{lemma}\label{lem:mininalComp}
The minimal \supplement\ of a tropical variety $\Var$ is unique.
\end{lemma}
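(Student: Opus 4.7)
The plan is to construct a canonical primitive covering $\comp{\tP(\Var)}^{\mathrm{can}}$ of $\Var$, verify that it yields a valid supplement, and then show that minimality forces any minimal supplement to agree with it. The starting observation is that every top-dimensional ($k$-dimensional) face $\dl$ of $\Var$ spans a unique $k$-dimensional rational affine plane, and hence is contained in a unique $k$-dimensional tropical primitive $\Prm(\dl)$. Consequently, any primitive cover $\comp{\tP(\Var)}$ must include each distinct $\Prm(\dl)$, and evaluating the identity $\Var \multU \comp{\Var} \eqM \comp{\tP(\Var)}$ on a top-dimensional face $\dl$ forces the multiplicity $m(\Prm(\dl))$ of $\Prm(\dl)$ in the cover to satisfy
\[
m(\Prm(\dl)) \;=\; m_{\Var}(\dl) + m_{\comp{\Var}}(\dl) \;\ge\; m_{\Var}(\dl),
\]
so $m(\Prm)\ge M(\Prm):=\max\{m_{\Var}(\dl') : \Prm(\dl')=\Prm\}$ for every such $\Prm$.

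I would then define $\comp{\tP(\Var)}^{\mathrm{can}}$ to be the weighted union of the distinct primitives $\Prm(\dl)$, each taken with multiplicity exactly $M(\Prm(\dl))$, and with no other primitives included; set $\comp{\Var}^{\mathrm{can}} := \comp{\tP(\Var)}^{\mathrm{can}} \mMinus \Var$. Two things remain to show. (a) $\comp{\Var}^{\mathrm{can}}$ is a genuine tropical variety: a weighted union of complete $k$-dimensional tropical primitives is automatically balanced at each codimension-$1$ face $\sigma$, since each primitive is a whole affine plane whose two halves at $\sigma$ carry opposite primitive normals with equal weights and so cancel; combined with the balancing of $\Var$ at $\sigma$ (hypothesis), subtracting the two balancing identities on the $k$-dimensional faces through $\sigma$ yields the balancing identity for $\comp{\Var}^{\mathrm{can}}$. (b) Every minimal completion equals $\comp{\tP(\Var)}^{\mathrm{can}}$: the inequality $m(\Prm)\ge M(\Prm)$ summed over the distinct $\Prm(\dl)$, together with the non-negative contribution of any auxiliary primitive, gives $|\comp{\tP(\Var)}|\ge |\comp{\tP(\Var)}^{\mathrm{can}}|$, with equality only if $m(\Prm)=M(\Prm)$ on each $\Prm(\dl)$ and no auxiliary primitives appear; hence every minimal completion, and thus every minimal supplement, agrees with $\comp{\Var}^{\mathrm{can}}$.

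The main obstacle is the balancing verification in (a): one must carefully handle codimension-$1$ faces $\sigma$ that simultaneously lie on several primitives of $\comp{\tP(\Var)}^{\mathrm{can}}$ and on several top-dimensional faces of $\Var$, and the weighted-difference bookkeeping has to keep the weights of $\comp{\Var}^{\mathrm{can}}$ non-negative and leave the balancing identity intact after subtraction. A secondary subtlety is excluding in (b) that some auxiliary primitive outside $\{\Prm(\dl)\}$ could be forced into a minimal completion by balancing requirements on $\comp{\Var}$; this is precisely what (a) rules out, since $\comp{\Var}^{\mathrm{can}}$ is already balanced using only the primitives $\Prm(\dl)$.
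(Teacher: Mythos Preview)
Your plan is sound and leads to a correct proof, but it is a genuinely different route from the paper's. The paper argues by contradiction: it assumes two distinct minimal supplements $\comp{\Sur}_1,\comp{\Sur}_2$ of a hypersurface $\Sur$, observes that the associated primitive covers must agree as sets (since any primitive not spanned by a top face of $\Sur$ could be dropped), and then, if some common primitive carries unequal weights $m_1>m_2$, invokes the primitive-extraction procedure of Lemma~\ref{lem:irrInPrm} to strip one copy from $\tP_1(\Sur)$, contradicting minimality; the general variety case is handled by the same argument on $k$-dimensional covers. In particular, the paper never verifies balancing directly---that work is hidden inside the extraction lemma.

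Your approach, by contrast, is constructive: you build the canonical cover $\comp{\tP(\Var)}^{\mathrm{can}}$ from the primitives $\Prm(\delta)$ with weights $M(\Prm)$, and then must check by hand that $\comp{\tP(\Var)}^{\mathrm{can}}\mMinus \Var$ is a balanced weighted complex. This buys you an explicit description of the minimal supplement (which the paper only obtains later, in Claim~\ref{clm:minimalRedCom}, via the polynomial identity and primitive reduction), at the cost of the balancing bookkeeping you flag as the main obstacle. That bookkeeping is genuinely the crux of your argument; note that you are using $\mMinus$ in a broader sense than the paper defines it (subtracting all of $\Var$, not a single primitive), and you should also remark that faces receiving weight~$0$ are simply deleted without disturbing either pure-dimensionality or the balancing identity. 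The paper's route is shorter because Lemma~\ref{lem:irrInPrm} already packages the needed balancing fact; your route is more informative because it exhibits the minimal supplement directly.
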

\begin{proof} First assume that $\Var$ is a tropical hypersurface $\Sur$. Assume $\comp{\Sur}_1$  and $\comp{\Sur}_2$  are two
different minimal \supplement s of a tropical hypersurface $\Sur$,
and let $\tP_i(\Sur)$ denote the corresponding primitive coverings
$\Sur \multU \comp{\Sur}_i$, $i = 1,2$.  Then, without weights
$\tP_1(\Sur) = \tP_2(\Sur)$; otherwise, one of the primitive
coverings has a primitive which does not contain a face of $\Sur$.
So, $\tP_1(\Sur)$ and $\tP_2(\Sur)$ have a common primitive with
unequal weight; say $m_1 > m_2$ respectively.  But, then one can
extract a primitive from $\tP_1(\Sur)$, thereby contradicting its
minimality.

In general, the case of tropical variety $\Var= \bigcap \Sur_i$
apply the same argument to possible $k$-dimensional primitive
coverings of $\Var$.
\end{proof}

We write $\mincomp{\Var}$ for the minimal \supplement\ of a
tropical variety $\Var$.
\begin{lemma}\label{lem:compOfComp} If  $\mincomp{\Sur}$ is the
minimal  \supplement\ of  a primitively irreducible undersurface
$\Sur$, then:
\begin{enumerate}
    \item $\Sur$ is the minimal \supplement \ of $\mincomp{\Sur}$. \pSkip

    \item $\mincomp{(\mincomp{\Sur})} = \Sur$.
\end{enumerate}
\end{lemma}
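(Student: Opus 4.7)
Part (2) reduces to (1) via Lemma~\ref{lem:mininalComp}: once $\Sur$ is shown to be the minimal supplement of $\mincomp{\Sur}$, uniqueness of the minimal supplement forces $\mincomp{(\mincomp{\Sur})}=\Sur$. I therefore concentrate on~(1).

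For (1), I would first verify that $\Sur$ is \emph{some} supplement of $\mincomp{\Sur}$. This is immediate from the symmetry of weighted union in the defining equation \eqref{eq:comp},
$$\mincomp{\Sur}\multU\Sur \;\eqM\; \Sur\multU\mincomp{\Sur} \;\eqM\; \comp{\tP(\Sur)},$$
whose right-hand side is a union of primitives that contains $\mincomp{\Sur}$, and hence is a primitive cover of $\mincomp{\Sur}$. The real content lies in proving minimality.

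For minimality, I would argue by contradiction: suppose some supplement $\Sur^{*}$ of $\mincomp{\Sur}$ satisfies
$$|\mincomp{\Sur}\multU\Sur^{*}| \;<\; |\mincomp{\Sur}\multU\Sur| \;=\; |\comp{\tP(\Sur)}|,$$
and set $V:=\mincomp{\Sur}\multU\Sur^{*}$, which is a union of primitives. The goal is to show that $V$ must in fact also contain $\Sur$ as a weighted sub-union; for then $V$ would be a primitive cover of $\Sur$ strictly smaller than $\comp{\tP(\Sur)}$, contradicting the minimality of $\mincomp{\Sur}$ (equivalently, of $\comp{\tP(\Sur)}$ as a primitive cover of $\Sur$).

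The principal obstacle — and the only place the hypothesis of primitive irreducibility is invoked — is precisely the containment $V\supseteq\Sur$. I would exploit Lemma~\ref{lem:irrInPrm} in contrapositive: since $\Sur$ is primitively irreducible, no primitive $\Prm$ in the decomposition of $\comp{\tP(\Sur)}$ can lie wholly inside $\Sur$, for otherwise $\Sur$ would contain $\Prm$ with its full weight and hence be primitively reducible. Consequently every such $\Prm$ contributes strictly positive weight to $\mincomp{\Sur}$, so the full list of primitives of $\comp{\tP(\Sur)}$ is already encoded in the face-by-face weight profile of $\mincomp{\Sur}$. Since $V$ must reproduce this profile by a union of primitives, its primitive constituents necessarily include those comprising $\comp{\tP(\Sur)}$ (any additional primitives only enlarge $|V|$); this forces $V\supseteq\comp{\tP(\Sur)}\supseteq\Sur$ and $|V|\ge|\comp{\tP(\Sur)}|$, the desired contradiction. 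Establishing rigorously that the weight pattern of $\mincomp{\Sur}$ uniquely determines the primitives that can cover it (given primitive irreducibility of its partner $\Sur$) is the step that will require the most care.
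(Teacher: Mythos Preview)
Your argument is sound and strictly more complete than the paper's, which consists of a single sentence: ``Follows directly from the uniqueness of the minimal \supplement.'' In particular, your reduction of (2) to (1) via Lemma~\ref{lem:mininalComp} is exactly the paper's move; for (1) itself the paper offers no further justification, whereas you correctly isolate the role of primitive irreducibility (via Lemma~\ref{lem:irrInPrm}) in forcing every primitive of $\comp{\tP(\Sur)}$ to carry a top-dimensional face of $\mincomp{\Sur}$, so that any primitive cover of $\mincomp{\Sur}$ must already contain $\comp{\tP(\Sur)}$.

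Your caveat about the weight-matching step is well placed: one must check that the weight each such primitive carries in any cover of $\mincomp{\Sur}$ is at least its weight in $\comp{\tP(\Sur)}$, not merely that the primitive appears. The natural way to close this is to note (as the proof of Lemma~\ref{lem:mininalComp} implicitly uses) that in a \emph{minimal} cover the weight of each primitive equals the maximum weight among the top-dimensional faces it contains; combined with Remark~\ref{rmk:reducedComp}, this pins down the weights symmetrically for $\Sur$ and $\mincomp{\Sur}$. The paper does not spell this out either, so you are filling a genuine gap rather than diverging from the intended route.
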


\begin{proof}
Follows directly  form the uniqueness of the minimal \supplement .
\end{proof}

\begin{corollary} Assume $\Var = \bigcap \Sur_i$ is a tropical variety,
where $\Sur_i \subset \Real^{(n)}$ are tropical primitively
irreducible hypersurfaces. Then, $\mincomp{(\mincomp{\Var})} =
\Var$.
\end{corollary}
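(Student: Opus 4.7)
The plan is to mimic the proof of Lemma~\ref{lem:compOfComp} and deduce the statement from the uniqueness of the minimal supplement (Lemma~\ref{lem:mininalComp}). Setting $\Var' = \mincomp{(\mincomp{\Var})}$, it then suffices to verify that $\Var$ itself satisfies the defining property of $\Var'$, namely that $\Var$ is \emph{the} minimal supplement of $\mincomp{\Var}$.

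The first step is to observe that $\Var \multU \mincomp{\Var}$, which by construction is the minimal primitive cover $\comp{\tP(\Var)}$, simultaneously serves as a primitive cover of $\mincomp{\Var}$: by symmetry of $\multU$, every $k$-dimensional primitive appearing in $\comp{\tP(\Var)}$ contains a face of $\Var$ or of $\mincomp{\Var}$, so it can be regarded as a piece of a primitive cover of $\mincomp{\Var}$ to which $\Var$ is adjoined. Hence $\Var$ qualifies as a supplement of $\mincomp{\Var}$.

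The second step is to establish minimality, i.e., that $|\comp{\tP(\Var)}|$ realizes $|\comp{\tP(\mincomp{\Var})}|$. Writing $\Var = \bigcap \Sur_i$, I would apply Lemma~\ref{lem:compOfComp}(2) to each primitively irreducible hypersurface $\Sur_i$ to get $\mincomp{(\mincomp{\Sur_i})} = \Sur_i$, then use the intersection decomposition, together with Lemma~\ref{lem:sSurTosVar}, to transport this involutivity hypersurface-by-hypersurface to $\Var$. Remark~\ref{rmk:irrVar} ensures that $\red{\Var} = \Var$, so no extraneous primitives can be silently shuffled between $\Var$ and $\mincomp{\Var}$ during the reversal.

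The hard part will be rigorously controlling the passage from the single-hypersurface case of Lemma~\ref{lem:compOfComp} to the complete-intersection case $\Var = \bigcap \Sur_i$. Specifically, one must rule out the existence of some supplement of $\mincomp{\Var}$ strictly smaller than $\Var$; such a supplement would (via the identification $\comp{\tP(\mincomp{\Var})} \eqM \comp{\tP(\Var)}$) give a primitive cover of $\Var$ strictly smaller than $\comp{\tP(\Var)}$, contradicting the minimality in Lemma~\ref{lem:mininalComp}. Justifying the rigidity $\comp{\tP(\mincomp{\Var})} \eqM \comp{\tP(\Var)}$ requires a face-by-face analysis: each $k$-dimensional primitive $\Prm$ in either cover intersects a fixed $\Sur_i$ in a $k$-dimensional primitive of $\Sur_i$, and primitive irreducibility of $\Sur_i$ (together with Lemma~\ref{lem:compOfComp}(1) applied to $\Sur_i$) forces $\Prm$ to be accounted for identically in both covers.
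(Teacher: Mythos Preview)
Your approach is essentially the paper's: the corollary is stated without proof immediately after Lemma~\ref{lem:compOfComp}, and the intended argument is exactly what you outline---carry the uniqueness of the minimal supplement (Lemma~\ref{lem:mininalComp}, which already covers the variety case) over from the hypersurface statement in Lemma~\ref{lem:compOfComp}. Your identification of the ``hard part'' (showing that the minimal primitive cover of $\Var$ coincides with that of $\mincomp{\Var}$) is more explicit than anything the paper writes down; the only misstep is the appeal to Lemma~\ref{lem:sSurTosVar}, which concerns starred varieties and plays no role here---drop it.
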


When $\comp{\Sur} = Z(g)$, for some $g \in \Trop[\lm_1,\dots,
\lm_n]$, is the \supplement\ of a tropical hypersurface $\Sur =
Z(f)$, we also say that $g$ is a \supplement\ of $f$ and denote it
$\comp{f}$. (Note that $\comp{f}$ need not to be unique.)


\begin{theorem}\label{thm:supplementSur}
Any tropical hypersurface $\Sur \subset \Real^{(n)}$ has a
\supplement , $\comp{H} \subset \Real^{(n)}$,  which is also a
tropical hypersurface; when $\Sur$ is generic, then  its
\supplement\ is pure.
\end{theorem}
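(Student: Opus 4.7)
The plan is to build $\comp{\Sur}$ directly from Theorem~\ref{permprime}. Write $\Sur = Z(f)$ where $f = \sum_{i=1}^m f_i \in \tTrop[\lm_1,\dots,\lm_n]$ is an essential polynomial with monomials $f_1, \dots, f_m$. Setting
\[
g \; := \; \pSet{f}{2} \cdot \pSet{f}{3} \cdots \pSet{f}{m-1},
\]
Corollary~\ref{cor:permprime2} gives $\bar f = \prod_{i<j}(f_i+f_j) \eqR f \cdot g$, so that at the level of weighted hypersurfaces
\[
Z(\bar f) \eqM Z(f) \multU Z(g) \eqM \Sur \multU Z(g).
\]
On the other hand, $\bar f$ is a product of essential binomials $f_i+f_j$, so by Remark~\ref{rmk:prmTobino} and Note~\ref{nt:polytopeOfBinomial}(ii) each $Z(f_i+f_j)$ is an $(n-1)$-dimensional tropical primitive, and therefore $Z(\bar f) = \bigcup_{i<j} Z(f_i+f_j)$ is a primitive cover of $\Sur$ in the sense of Definition~\ref{def:primitiveCovering}. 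Hence $\comp{\Sur} := Z(g)$ satisfies equation~\eqref{eq:comp}, and it is a tropical hypersurface since it is cut out by the single polynomial $g$. This establishes the first assertion.

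For the genericity statement, the claim to prove is that $\Sur$ and $\comp{\Sur}$ share no top-dimensional (i.e.\ $(n-1)$-dimensional) face, so that the weighted union $\Sur \multU \comp{\Sur}$ is semidisjoint in the sense of Definition~\ref{def:semidisjoint}. Each top-dimensional face $\delta$ of $\Sur$ is dual, via the Newton subdivision $S(f)$ of Subsection~\ref{nsec2}, to an edge of $S(f)$ joining two vertices corresponding to a pair of essential monomials $f_i, f_j$; thus $\delta \subset Z(f_i+f_j)$. Genericity of $\Sur$ means that $\delta$ is the \emph{only} top-dimensional face of $\Sur$ lying inside this single primitive.

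The main obstacle is the weight-bookkeeping step that converts this set-theoretic observation into a statement about weighted unions. I would argue as follows, using the canonical weight description of Remark~\ref{rmk:muti-dual}. The weight of $\delta$ as a face of $Z(\bar f)$ equals the integral length of its dual edge in $S(\bar f)$; by the genericity hypothesis, only the single primitive $Z(f_i+f_j)$ contributes to this weight, and its contribution exactly matches the weight of $\delta$ as a face of $\Sur$. Since weights add under $\multU$, the weight of $\delta$ inside $Z(g)$ must be $0$, so $\delta$ is not a top-dimensional face of $\comp{\Sur}$. Running through all top-dimensional faces of $\Sur$ this way, every overlap of $\Sur$ and $\comp{\Sur}$ has dimension strictly less than $n-1$, which is precisely semidisjointness. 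This step could alternatively be handled algebraically by factoring each binomial $f_i+f_j$ out of $\bar f$ and checking that, under genericity, it appears with the same multiplicity in $f$ as in $\bar f$, leaving no copies for $g$; but I expect the Newton-polytope route to be cleaner and more in line with the spirit of Section~\ref{nsec2}.
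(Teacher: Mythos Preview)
Your construction of $\comp{\Sur}$ is exactly the paper's: write $\Sur=Z(f)$, set $g=\pSet{f}{2}\cdots\pSet{f}{m-1}$, invoke Corollary~\ref{cor:permprime2} to get $\bar f \eqR fg$, and read off $\comp{\Sur}=Z(g)$ together with the primitive cover $Z(\bar f)=\bigcup_{i<j}Z(f_i+f_j)$.

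For the genericity clause the paper is much shorter and purely set-theoretic: since each primitive $\Prm_{i,j}=Z(f_i+f_j)$ carries at most one top-dimensional face $\delta$ of $\Sur$, the remainder $\Prm_{i,j}\setminus\delta$ must lie in $\comp{\Sur}$, and the paper concludes directly that $\Sur\cap\comp{\Sur}$ has dimension $<n-1$. It does no weight bookkeeping at all. Your Newton-polytope route reaches the same conclusion but leans on the claim that ``only the single primitive $Z(f_i+f_j)$ contributes'' to the weight of $\delta$ in $Z(\bar f)$. That does not follow from genericity as stated: genericity forbids two top-dimensional faces of $\Sur$ from lying on one primitive, but it does not forbid a second pair $(f_k,f_l)$ (not forming an edge of $S(f)$) from defining the \emph{same} primitive hyperplane and thus adding extra multiplicity in $\bar f$. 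So either tighten that step, or replace it with the paper's shorter argument, which simply observes that outside $\delta$ the primitive is forced into $\comp{\Sur}$ and hence the overlap drops dimension.
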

\begin{proof} Write $\Sur = Z(f)$ for some $f = \sum f_i$
in $\tTrop[\lm_1,\dots,\lm_n]$ and apply Corollary
\ref{cor:permprime2}. Denoting $\bar f  = \prod_{j  < \j} (f_i +
f_j)$, it is clear that $Z( \bar f ) =  \tP(\Sur)$ is a primitive
cover of $\Sur$, explicitly,  $\Prm_{i,j} = Z( f_i + f_j)$,
$\tP(\Sur) = \bigcup \Prm_{i,j}$.   Let
\begin{equation}\label{eq:supplementSur} g = \left(\sum_{i <
j } f_i f_j\right) \cdots \left(\sum_{i }\prod_{j \neq i}
f_j\right),
\end{equation}
and take $G = Z(g)$, clearly a tropical hypersurface. Using
Equation \Ref{eq:main}, we have $f g = h$ and thus $Z(f) \multU
Z(g) \eqM Z(h)$. Namely $\comp{\Sur} = G$ is a \supplement\ of
$\Sur$.

Assume that $\Sur$ is generic. Thus, on each primitive
$\Prm_{i,j}$ of the primitive cover $\tP(\Sur)$,  $\Sur$  has at
most one top-dimension face $\dl$, i.e. $\Prm_k \setminus \dl
\subset \comp{\Sur}$. So, all the intersections of $\Sur$ and
$\comp{\Sur}$ are of dimension $< (n-1)$.
\end{proof}

\begin{example} The \supplement \  \Ref{eq:supplementSur} of a tropical
hypersurface $\Sur = Z(f) \subset \Real^{(n)}$ whose Newton
polytope ~$\Delta$ is a  simplex  (and thus has empty subdivision)
is a minimal pure \supplement .

Indeed, $\Delta$ has $n+1$ vertices, each corresponding to a
monomial of $f$, and exactly $\chos{n+1}{2}$ 1-dimensional faces,
dual to the top-dimensional faces of $\Sur$. On the other hand,
the primitive cover of $\Sur$ consists of $\chos{n+1}{2}$
primitives (not counting multiplicities) which are determined by
the pairs of different monomials of $f$, cf. Theorem
\ref{permprime} (1). Thus, $\comp{\Sur} = Z(\comp{f})$ is a pure
\supplement .

To see that $\tP(\Sur) = \Sur \multU \comp{\Sur}$ is the minimal
cover, just note, by construction, that each primitive has the
same multiplicity as the top-dimensional face it covers.
\end{example}

\begin{corollary}\label{cor:coverOfVar} Assume $\Var = \bigcap \Sur_i$ is a tropical variety,
where $\Sur_i \subset \Real^{(n)}$ are tropical hypersurfaces.
Then, $\comp{\Var} = \bigcap \comp{\Sur_i}$.
\end{corollary}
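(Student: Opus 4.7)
The plan is to verify directly that $\bigcap_i \comp{\Sur_i}$ satisfies the defining property of a supplement of $\Var$, and then to conclude by uniqueness of the minimal supplement. First I would write $\Sur_i = Z(f_i)$ and apply Theorem \ref{thm:supplementSur} individually to each hypersurface, obtaining $\comp{\Sur_i} = Z(\comp{f_i})$ with $f_i \comp{f_i} \eqR \bar f_i$, so that $\Sur_i \multU \comp{\Sur_i} \eqM Z(\bar f_i)$ is a primitive cover $\tP(\Sur_i)$ of $\Sur_i$, whose primitives $Z(f_{i,j}+f_{i,k})$ are each cut out by a single binomial.

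Second, I would build a primitive cover of $\Var$ by intersecting the hypersurface covers. Since $\Var = \bigcap_i \Sur_i \subset \bigcap_i \tP(\Sur_i)$, distributing intersection over weighted union gives
\begin{equation*}
\bigcap_i \tP(\Sur_i) \;\eqM\; \MultU_{(j_i,k_i)}\; \bigcap_i Z(f_{i,j_i}+f_{i,k_i}),
\end{equation*}
where each summand, being the intersection of $|I|$ binomial primitives, is itself a tropical primitive of dimension $n-|I| = k$ by Remark \ref{rmk:prmTobino}. Hence $\bigcap_i \tP(\Sur_i)$ is a valid primitive cover $\tP(\Var)$ of $\Var$.

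Third, I would apply distributivity on the other side to obtain $\bigcap_i (\Sur_i \multU \comp{\Sur_i}) \eqM \Var \multU \bigcap_i \comp{\Sur_i}$, after absorbing the mixed cross-terms $\bigcap_{i\in A}\Sur_i \cap \bigcap_{i\notin A}\comp{\Sur_i}$ for proper subsets $\emptyset \neq A \subsetneq I$. By the genericity clause of Theorem \ref{thm:supplementSur}, each such term lies in a face of dimension strictly less than $k$ and therefore contributes no top-dimensional content to $\tP(\Var)$. Combining with the previous step yields $\Var \multU \bigcap_i \comp{\Sur_i} \eqM \tP(\Var)$, so $\bigcap_i \comp{\Sur_i}$ is a supplement of $\Var$; Lemma \ref{lem:mininalComp} then identifies it with $\comp{\Var}$.

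The main obstacle is the bookkeeping of weights on the cross-terms: the set-theoretic distributive identity is straightforward, but ensuring that the mixed intersections truly cancel weight-wise (so that the weighted equality $\eqM$ holds, not merely set equality) requires careful use of the balancing condition \eqref{eq:balanceCond}, or equivalently a direct polynomial argument applying Corollary \ref{cor:permprime2} to $\prod_i f_i \comp{f_i} \eqR \prod_i \bar f_i$ and reading the resulting identity geometrically via \eqref{eq:multToUni}.
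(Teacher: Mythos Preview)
Your third step contains a genuine error. The mixed cross-terms $\bigcap_{i\in A}\Sur_i \cap \bigcap_{i\notin A}\comp{\Sur_i}$ for proper nonempty $A\subsetneq I$ are \emph{not} lower-dimensional: each is an intersection of $|I|$ tropical hypersurfaces in $\Real^{(n)}$ and is generically of dimension $n-|I|=k$, the same as $\Var$ itself. Your appeal to the genericity clause of Theorem~\ref{thm:supplementSur} is misapplied: that clause asserts only that $\Sur_i$ and $\comp{\Sur_i}$ (for the \emph{same} index $i$) meet in dimension $<n-1$; it says nothing about $\Sur_i\cap\comp{\Sur_j}$ for $i\ne j$, and indeed that intersection has no reason to degenerate. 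Consequently the distributive identity you need, $\bigcap_i(\Sur_i\multU\comp{\Sur_i}) \eqM \Var \multU \bigcap_i\comp{\Sur_i}$, simply fails: the left side strictly contains the right as $k$-dimensional sets. Your fallback polynomial argument has the same defect in disguise, since $Z(\prod_i f_i)=\bigcup_i\Sur_i$, not $\bigcap_i\Sur_i$, so the identity $\prod_i f_i\comp{f_i}\eqR\prod_i\bar f_i$ translates into a statement about unions, not about $\Var$.

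The paper avoids this trap entirely by arguing locally, one top-dimensional face at a time: a face $\delta$ of $\Var$ is written as $\bigcap_i\delta_i$ with $\delta_i$ a top-dimensional face of $\Sur_i$ lying on a primitive $\Prm_i$, so $\delta\subset\Prm=\bigcap_i\Prm_i$; the supplement of each $\delta_i$ inside $\Prm_i$ belongs to $\comp{\Sur_i}$, and the intersection of those supplementary pieces lies in $\Prm$. This face-by-face viewpoint shows directly that $\bigcap_i\comp{\Sur_i}$ completes each face of $\Var$ to its containing $k$-primitive, which is what is required of a supplement; no global distributivity, and hence no cross-term bookkeeping, is needed.
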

\begin{proof} Each top-dimensional face $\dl$ of $\Var$ is the
intersection of top-dimensional faces $\dl_i$ of $\Sur_i$
contained in some $k$-dimensional  primitive $\Prm = \bigcap_i
\Prm_i$ with $\dl_i \subset \Prm_i$. The \supplement\ of each
$\dl_i$ is also in $\Prm_i$ and thus their intersection is
contained in $\Prm$.
\end{proof}

\subsection{Examples} In this subsection we present a few examples
of typical planar \supplement s.

\begin{example}\label{exm:1}

\begin{figure}[!h]
\setlength{\unitlength}{0.7cm}
\begin{picture}(10,7)(0,0)

\thicklines \drawline(6,3)(8,1) \drawline(6,3)(6,0.5)
\drawline(6,3)(3,4.5)

\drawline[-20](6,3)(4,5) \drawline[-20](6,3)(6,5.4)
\drawline[-20](6,3)(9,1.5)

\put(2.1,4.6){\scriptsize{$\lm _1\lm
_2^2+0$}}\put(5.5,0.2){\scriptsize{$\lm _1+0$}}
\put(7.7,0.6){\scriptsize{$ \lm _1\lm _2 + 0$}}
\put(5.9,5.6){$p_2$} \put(9.1,1.2){$p_3$}\put(3.7,5.2){$p_1$}

\put(6.1,2.9){\scriptsize{$(0,0)$}}
\end{picture}
\caption{Illustration for Example \ref{exm:1}. \label{fig:1}}
\end{figure}

A tropical planar curve with a single node.  Let $\Cur = Z(f)$,
where $f = \lm _1^2\lm _2 + \lm _1 + 0$.  Take $ \comp{f} = \lm
_1^3\lm _2 + \lm _1^2\lm _2 + \lm_1$. Then $\comp{\Cur} =
Z(\comp{f})$ is a \supplement\ (and also a point \symmetry, as
explained below) of $\Cur$ along $(0,0)$. The primitive cover is
determined by the binomials
   $p_1 = \lm _1^2\lm _2 + 2,\  p_2 = \lm _1^2\lm _2 + 0, \ p_3 = \lm _1 + 0$,
   yielding  the
   equality
$$ f \comp{f} = p_1 p_2 p_3 = \lm _1 + \lm _1^2 + \lm _1^2\lm _2 +  \lm _1^3 \lm _2 + \lm _1^4 \lm _2 + \lm _1^4 \lm _2^2 + \lm _1^5 \lm _2^2.$$
See Fig. \ref{fig:1}, where the dashed lines correspond to
$\comp{\Cur}$ and the solid lines correspond to $\Cur$. This is a
pure
 \supplement\ which is the minimal \supplement .
\end{example}

\begin{example}\label{exm:2}(see Fig. \ref{fig:2}).
\begin{figure}[!h]
\setlength{\unitlength}{0.8cm}
\begin{picture}(10,7)(0,0)

\thicklines
\drawline(5,4)(2,4)\drawline(5,4)(6,3)\drawline(5,4)(7,6)
\drawline(6,3)(6,0.5)\drawline(6,3)(8,5)

\dottedline{0.15}(5,4)(3,2)\dottedline{0.15}(5,4)(6,4)\dottedline{0.15}(5,4)(3,6)
\dottedline{0.15}(6,3)(8,1)\dottedline{0.15}(6,3)(4,1)\dottedline{0.15}(6,3)(6,4)
\dottedline{0.15}(6,4)(7.5,5.5)

\drawline[-30](6,4)(6,6)
\drawline[-30](6,4)(8,4)\drawline[-30](6,4)(3.5,1.5)
%

\put(2,4.2){\scriptsize{$\lm
_2^2+0$}}\put(5.5,0.2){\scriptsize{$\lm _1^2+0$}}
\put(4.4,3){\scriptsize{$\al \lm _1\lm _2
+0$}}\put(6.5,6.2){\scriptsize{$\al \lm _1 +\lm _2$}}
\put(7.7,5.2){\scriptsize{$ \lm _1 + \al \lm _2$}}
\put(2.8,6.2){$p_1$}\put(5.9,6.1){$p_2$}
\put(8.3,4){$p_3$}\put(2.7,1.7){$p_5$}\put(3.7,0.7){$p_4$}
\put(3.1,1.2){$p_6$}

\put(4,4.2){\scriptsize{$(-\al,0)$}}\put(6.2,2.8){\scriptsize{$(0,-\al)$}}
\end{picture}
\caption{Illustration for Example \ref{exm:2}. \label{fig:2} }
\end{figure}
A tropical conic with two vertices.

Let $\Cur = Z(f)$, where $f= \lm _1^2 + \lm _2^2 + \al \lm _1\lm
_2 + 0,$
 with $\al > 0$. The
 \supplement\ of $f$ consists of two components, drawn in dashed and
 dotted  lines for $Z(\pSet{f}{2})$ and $Z(\pSet{f}{2})$, respectively. The
 primitive cover is determined by the following binomials (which are obtained by
 taking the sums of all pairs of monomials of $f$): $p_1= \al \lm _1\lm _2 +
0$, $p_2= \lm _1^2 + 0$, $p_3 = \lm _2^2 +0$, $p_4 = \lm _1 + \al
\lm _2$, $p_5 = \al \lm _1 + \lm _2$, and $p_6 = \lm_1 + \lm_2$.
The \supplement\ here is pure but not minimal.
\end{example}

\begin{example}\label{exm:3}
\begin{figure}[!h]
\setlength{\unitlength}{0.7cm}
\begin{picture}(10,8)(0,0)

\thicklines
\drawline(5,4)(3.5,7)\drawline(5,4)(5,2)\drawline(5,4)(7,2)
\drawline(5,2)(7,2)\drawline(5,2)(3.5,0.5)\drawline(7,2)(10,0.5)

\dottedline{0.15}(5,4)(5.8,2.7)\dottedline{0.15}(5,4)(5,7)\dottedline{0.15}(5,4)(3,6)
\dottedline{0.15}(5,2)(5.8,2.7)\dottedline{0.15}(5,2)(3,2)\dottedline{0.15}(5,2)(5,0.5)
\dottedline{0.15}(7,2)(5.8,2.7)\dottedline{0.15}(7,2)(8.5,0.5)\dottedline{0.15}(7,2)(9.5,2)

\drawline[-30](5.8,2.7)(8.8,5.7)\drawline[-30](5.8,2.7)(6.9,0.5)\drawline[-30](5.8,2.7)(2.8,4.2)

\put(5,4.2){\scriptsize{$(\al,0)$}}\put(7,2.1){\scriptsize{$(0,\al)$}}\put(4,2.1){\scriptsize{$(0,0)$}}

\put(3.2,7.2){$p_1$} \put(5,7.2){$p_2$} \put(8.9,5.7){$p_3$}
\put(9.8,2){$p_4$}\put(10.1,0.5){$p_5$}\put(8.5,0){$p_6$}
\end{picture}
\caption{Illustration for Example \ref{exm:3}. \label{fig:3} }
\end{figure}
Let $\Cur = Z(f)$, $f= \lm _1^2\lm _2^2 + \al \lm _1\lm _2 + \lm_1
+ \lm_2$,
 with $\al > 0$, be a (generic) tropical  curve of genus 1 (see Fig. \ref{fig:3}). The
 \supplement\ of $f$  consists of two components (drawn in dashed and
 dotted lines). The supplement again consists of six primitives; thus
 the \supplement\ is minimal (and also pure).
\end{example}

\subsection{The reduced \completion}
Once we have specified a \supplement\  $\comp{\Sur}$ of
hypersurfaces, with an explicit algebraic description, cf. Theorem
\ref{thm:supplementSur} we define the \textbf{reduced \supplement
} $\red{\comp{\Sur}}$ by taking the primitive reduction of
$\comp{\Sur}$. In this sense, for a tropical variety $\Var$, we
minimize $|\comp{\tP(\Var)}|$ as much as possible. The weighted
$\Sur \MultU \red{\comp{\Sur}}$ union is called \textbf{reduced
\completion } of $\Sur$.

\begin{remark} In the case of a hypersurface (the algebraic set of a
polynomial $f$), its reduced completion is the algebraic set of
the reduced closure of $f$.\end{remark}

\begin{claim}\label{clm:minimalRedCom}
$\red{\comp{\Sur}}$ is the minimal \supplement\ of the tropical
surface $\Sur$.
\end{claim}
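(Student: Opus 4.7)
The plan is to establish the claim in two stages: first, verify that $\red{\comp{\Sur}}$ remains a supplement of $\Sur$ throughout the primitive reduction procedure, and second, show that no supplement can have strictly smaller primitive count, so that uniqueness of the minimal supplement (Lemma~\ref{lem:mininalComp}) pins down $\red{\comp{\Sur}}$ as the minimal one.

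For the first stage I would analyze a single extraction step. When a primitive $\Prm$ of multiplicity $m$ is extracted from $\comp{\Sur}$, the hypothesis $\Prm \subset \comp{\Sur}$ guarantees that every top-dimensional face of $\comp{\Sur}$ lying on $\Prm$ has weight at least $m$, so subtracting $m$ leaves nonnegative weights and, by the computation in the proof of Lemma~\ref{lem:irrInPrm}, preserves the balancing condition. Since $\Sur$'s weights are untouched, the weighted union $\Sur \multU (\comp{\Sur} \mMinus \Prm)$ is precisely $(\Sur \multU \comp{\Sur}) \mMinus \Prm$, which is just the original primitive cover with one copy of $\Prm$ removed---still a weighted union of primitives, still containing $\Sur$. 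Iterating until no further primitive can be extracted yields $\red{\comp{\Sur}}$ as a supplement, with $|\Sur \multU \red{\comp{\Sur}}|$ strictly smaller than $|\Sur \multU \comp{\Sur}|$ whenever any primitive was extractable.

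The second stage is the main obstacle. The decisive observation I would use is that, after full reduction, every primitive $\Prm$ appearing in the cover $\Sur \multU \red{\comp{\Sur}}$ must meet $\Sur$ in a top-dimensional face: otherwise the entire multiplicity of $\Prm$ in the cover would be contributed by $\red{\comp{\Sur}}$, forcing $\Prm \subset \red{\comp{\Sur}}$ and permitting one more extraction, which contradicts that the reduction has already terminated. Since each top-dimensional face $\dl$ of $\Sur$ lies in a unique $(n-1)$-dimensional plane $\Prm_\dl$, that plane must appear in every primitive cover of $\Sur$ with multiplicity at least $m(\dl)$; the reduced completion meets each such lower bound on the nose. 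Therefore $|\Sur \multU T|\geq |\Sur \multU \red{\comp{\Sur}}|$ for any supplement $T$ of $\Sur$, and Lemma~\ref{lem:mininalComp} then identifies $\red{\comp{\Sur}}$ as the unique minimal supplement.
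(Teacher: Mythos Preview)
Your argument is correct and follows the paper's outline, though with considerably more detail. The paper's proof is two sentences: it notes that the primitive reduction discards only primitives (whose own supplement is empty), so $\red{\comp{\Sur}}$ remains a supplement---this is your first stage---and then for minimality says only ``apply Lemma~\ref{lem:mininalComp}''. Your second stage unpacks what that invocation actually needs: the observation that after full reduction every primitive in the cover must carry a top-dimensional face of $\Sur$ (otherwise it could still be extracted) is the same idea that drives the proof of Lemma~\ref{lem:mininalComp}, and your explicit lower-bound count turns it into a direct minimality argument rather than a bare citation.

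One small imprecision: when $\Sur$ is not generic and several top-dimensional faces $\dl_1,\dl_2,\ldots$ of $\Sur$ lie on the same primitive $\Prm$, the multiplicity of $\Prm$ in the reduced cover is $\max_i m(\dl_i)$, not each $m(\dl_i)$ individually; so ``meets each such lower bound on the nose'' should be read against that maximum. With this adjustment your inequality $|\Sur \multU T|\geq |\Sur \multU \red{\comp{\Sur}}|$ holds for every supplement $T$, and the appeal to Lemma~\ref{lem:mininalComp} at the end is then just for uniqueness.
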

\begin{proof} The primitive  reduction procedure discards only primitives,
whose set-theoretic \completion s are always the empty set, so
$\red{\comp{\Sur}}$ is a \completion\ as well. To see that it is
minimal, apply Lemma~\ref{lem:mininalComp}.
\end{proof}

\begin{corollary} The minimal \supplement \ of a tropical variety  $\Var = \bigcap_i
\Sur_i$ is $\red{\comp{\Var}} = \bigcap \red{\comp{\Sur}}$.
\end{corollary}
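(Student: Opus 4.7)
The plan is to combine the previous corollary $\comp{\Var} = \bigcap_i \comp{\Sur_i}$ (Corollary~\ref{cor:coverOfVar}) with Claim~\ref{clm:minimalRedCom} (which tells us that for each single hypersurface $\Sur_i$, the primitive reduction $\red{\comp{\Sur_i}}$ is the minimal \supplement\ of $\Sur_i$), and then invoke the uniqueness of the minimal \supplement\ (Lemma~\ref{lem:mininalComp}) to identify the intersection as the minimal \supplement\ of $\Var$.

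Concretely, I would argue as follows. First, by Corollary~\ref{cor:coverOfVar} we have a \supplement\ $\comp{\Var} = \bigcap_i \comp{\Sur_i}$ of $\Var$. The next step is to show that primitive reduction commutes with the intersection in this setting, i.e.\ $\red{\bigcap_i \comp{\Sur_i}} \eqM \bigcap_i \red{\comp{\Sur_i}}$. This follows because a $k$-dimensional primitive $\Prm$ sitting inside $\comp{\Var}$ arises (by the description of $\Var = \bigcap_i \Sur_i$ as a complete intersection of hypersurfaces) from $(n-1)$-dimensional primitives $\Prm_i \subset \comp{\Sur_i}$ with $\Prm = \bigcap_i \Prm_i$; extracting $\Prm$ from the intersection is the same as extracting each $\Prm_i$ from the corresponding $\comp{\Sur_i}$, since extraction is defined face-by-face through the balancing condition \eqref{eq:balanceCond} and the weights in the intersection are determined coordinate-wise by the weights of the $\comp{\Sur_i}$.

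Once this identification $\red{\comp{\Var}} \eqM \bigcap_i \red{\comp{\Sur_i}}$ is established, we show that this candidate is indeed a \supplement\ of $\Var$: its weighted union with $\Var$ still produces a primitive cover of $\Var$, because discarding primitives via the primitive reduction procedure only removes pieces whose set-theoretic \completion s are empty (as in the proof of Claim~\ref{clm:minimalRedCom}). Finally, minimality: if some $\Var'$ were a strictly smaller \supplement\ of $\Var$, then by intersecting with the corresponding $\Sur_i$-slices one would produce a \supplement\ of some $\Sur_i$ strictly smaller than $\red{\comp{\Sur_i}}$, contradicting Claim~\ref{clm:minimalRedCom}. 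Uniqueness of the minimal \supplement\ (Lemma~\ref{lem:mininalComp}) then forces $\mincomp{\Var} = \red{\comp{\Var}} = \bigcap_i \red{\comp{\Sur_i}}$.

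The main obstacle I expect is the commutation step $\red{\bigcap_i \comp{\Sur_i}} = \bigcap_i \red{\comp{\Sur_i}}$: one has to verify that a primitive extractable from the intersection corresponds precisely to a collection of primitives extractable from the individual \supplement s, with matching multiplicities. The combinatorial duality of Section~\ref{nsec2} (in particular Note~\ref{nt:polytopeOfBinomial}) together with the canonicity of the primitive reduction should make this bookkeeping go through, but this is the only nontrivial piece; everything else is a direct application of the results already proved.
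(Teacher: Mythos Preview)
Your proposal is aligned with the paper's reasoning, but the paper's own proof is far terser: it reads in full ``Immediate from Corollary~\ref{cor:coverOfVar}.'' In other words, the paper simply takes the identity $\comp{\Var} = \bigcap_i \comp{\Sur_i}$ and passes to reductions without further comment, implicitly relying on Claim~\ref{clm:minimalRedCom} and Lemma~\ref{lem:mininalComp} just as you do. The commutation step you single out as the ``main obstacle'' is left entirely implicit in the paper; your elaboration of it is reasonable, but the paper does not supply (or apparently feel the need for) that bookkeeping.
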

\begin{proof} Immediate from Corollary \ref{cor:coverOfVar}.\end{proof}

\begin{remark}\label{rmk:reducedComp} Suppose $\Sur$ is a primitively irreducible surface, i.e., $\Sur = \red{\Sur}$.
Denote the minimal primitive cover correspond to
$\red{\comp{\Sur}}$ by $\comp{\red{\tP}(\Sur)}$. Since
$\red{\comp{\Sur}}$ is the minimal \supplement \ of $\Sur$, the
multiplicity of each primitive $\Prm$ in $\comp{\red{\tP}(\Sur)}$
is equal to the sum of the weights of the top-dimensional
overlapping faces of $\Sur$ and $\red{\comp{\Sur}}$ that   $\Prm$
covers.
\end{remark}

Locally, any tropical variety $\Var \subset \Real^{(n)}$ can be
viewed as a starred  variety. Given a point $\bfa \in \Var$,
taking a small enough neighborhood $B(\bfa) \subset \Real^{(n)}$
of $\bfa$ and restricting  $\Var$ to $B(\bfa)$, one can see that
locally $\Var$ is either a starred  or a primitive variety. The
latter situation is trivial,  and we are mostly interested in the
starred varieties.

\begin{claim} The reduced \completion\ of a tropical starred  variety
$\Var \subset \Trop^{(n)}$ is a pure \completion .
\end{claim}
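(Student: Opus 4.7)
The plan is to reduce to a single starred hypersurface via Lemma \ref{lem:sSurTosVar}, and then argue locally on each primitive of the cover. First suppose $\Var$ is a starred hypersurface $\Sur = Z(f)$ with $f = \sum_{i=1}^{m} f_i$ and unique bottom-dimensional face $\tau$. The primitive cover is $\tP(\Sur) = Z(\bar f) = \bigcup_{i<j}\Prm_{ij}$, where $\Prm_{ij} = Z(f_i+f_j)$, as in Theorem \ref{thm:supplementSur}.

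The starred condition forces every top-dimensional face of $\Sur$ to contain $\tau$; hence any primitive $\Prm_{ij}$ that carries a top-dimensional face of $\Sur$ must itself contain $\tau$, and the top-dimensional faces of $\Sur$ inside $\Prm_{ij}$ all share $\tau$ as a common boundary face. Within each such $\Prm_{ij}$ the faces of $\Sur$ therefore form a proper sub-complex, unless $\Sur$ covers $\Prm_{ij}$ entirely (the non-generic situation in which $\Prm_{ij}\subset\Sur$, handled at the end).

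Primitive reduction of $\comp{\Sur}$ removes every primitive whose entire extent sits in $\comp{\Sur}$ with positive weight. On each $\Prm_{ij}$ carrying a top-dim face $\dl$ of $\Sur$ of weight $m(\dl)$, the weight of $\comp{\Sur}$ on $\dl$ equals $m(\Prm_{ij}) - m(\dl)$, while its weight on $\Prm_{ij}\setminus\dl$ equals $m(\Prm_{ij})$. Extraction of $\Prm_{ij}$ from $\comp{\Sur}$ with multiplicity $m(\Prm_{ij}) - m(\dl)$ therefore leaves $\red{\comp{\Sur}}$ with weight zero on $\dl$ and positive weight only on $\Prm_{ij}\setminus\dl$. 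Primitives $\Prm_{ij}$ disjoint from top-dim faces of $\Sur$ lie entirely inside $\comp{\Sur}$ and are extracted wholesale, contributing nothing to $\red{\comp{\Sur}}$.

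Thus no top-dim face of $\Sur$ sits inside a top-dim face of $\red{\comp{\Sur}}$, so $\Sur \cap \red{\comp{\Sur}}$ is confined to the common lower-dimensional boundaries, of dimension at most $n-2 < n-1 = \dim\Sur$. By Definition \ref{def:semidisjoint}, $\{\Sur,\red{\comp{\Sur}}\}$ is semidisjoint, so the reduced completion is pure. For the general starred variety $\Var = \bigcap_i \Sur_i$ one invokes the corollary preceding this claim, $\red{\comp{\Var}} = \bigcap_i \red{\comp{\Sur_i}}$, and observes that the top-dim faces of $\Var$ arise as intersections of top-dim faces of the $\Sur_i$, so semidisjointness on each factor passes to the intersection. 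The main obstacle is the non-generic primitive $\Prm_{ij}$ carrying several top-dim faces of $\Sur$ (possibly covering $\Prm_{ij}$ completely); the same local weight accounting, applied with $m(\dl)$ replaced by the sum of the weights of all top-dim faces of $\Sur$ lying in $\Prm_{ij}$, still shows that $\red{\comp{\Sur}}$ ends up with zero weight on every such face, keeping the intersection lower-dimensional.
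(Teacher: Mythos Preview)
Your argument is correct and rests on the same idea as the paper --- that the starred condition forces every top-dimensional face of $\Sur$ to emanate from the unique bottom face $\tau$, so after reduction the supplement can only occupy the ``opposite'' cones and meets $\Sur$ in codimension $\ge 1$. The paper's own proof is the single sentence ``Clear from the fact that $\Sur$ is starred,'' so your weight-accounting on each primitive $\Prm_{ij}$ and your reduction to hypersurfaces via Lemma~\ref{lem:sSurTosVar} simply make explicit what the paper leaves to the reader.
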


\begin{proof}
Clear from the fact that $\Sur$ is starred.
\end{proof}

Let $\tau \subset \Sur$ be a bottom-dimensional  face, $\bfa \in
\tau$ a point, and  $B(\bfa)$  a small neighborhood. Denoting  the
restriction of $\Sur$  to $B(\bfa)$ by $\Sur_\tau$; then
$\Sur_\tau$ is a starred  hypersurface of the same
bottom-dimension as $\Sur$. Let ${\comp{\Sur_\tau}}$ be its
\completion . Constructing the \supplement\ locally and viewing it
in $\Real^{(n)}$, we have the following identification:
\begin{theorem}\label{thm:LocalGlobal}
The global reduced \supplement \ of a generic tropical
hypersurface $\Sur$ is equal to the primitive reduction of the
weighted union of the local \supplement s along its
bottom-dimension faces, i.e.
$$ \red{\comp{\Sur}} \eqM \red{\overset{\operatorname{w}}{\bigcup_\tau} \; \comp{\Sur_\tau} }$$
where $\tau$ runs over all the  bottom-dimensional faces of
$\Sur$.
\end{theorem}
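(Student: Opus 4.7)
The plan is to reduce to the uniqueness of the minimal supplement established in Lemma~\ref{lem:mininalComp} and Claim~\ref{clm:minimalRedCom}: I will argue that both sides of the asserted equality, after primitive reduction, are minimal supplements of $\Sur$, and then invoke uniqueness to conclude.

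First I would write $\Sur = Z(f)$ with $f = \sum_{i=1}^m f_i \in \tTrop[\lm_1, \dots, \lm_n]$, and for each bottom-dimensional face $\tau$ of $\Sur$ fix a point $\bfa \in \tau$ and a small neighborhood $B(\bfa)$. Let $I_\tau$ denote the set of indices $i$ such that $f_i$ attains the maximum of $f$ at some point of $B(\bfa)$, and set $f_\tau = \sum_{i \in I_\tau} f_i$, so that $\Sur_\tau = Z(f_\tau) \cap B(\bfa)$ is a starred hypersurface by Note~\ref{nt:polytopeOfBinomial}. Applying Corollary~\ref{cor:permprime2} to $f_\tau$ produces the local supplement $\comp{\Sur_\tau}$, whose primitive cover consists of the primitives $\Prm_{i,j}^{(\tau)} = Z(f_i + f_j)$ for $i,j \in I_\tau$, $i < j$.

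Next I would show that $\bigcup_\tau \comp{\Sur_\tau}$, taken as a weighted union, is a supplement of $\Sur$. The key observation is that any top-dimensional face $\dl$ of $\Sur$ is the locus where exactly two monomials $f_i, f_j$ of $f$ tie for the maximum, so $\dl$ lies in the primitive $\Prm_{i,j} = Z(f_i + f_j)$ appearing in the global cover coming from Theorem~\ref{thm:supplementSur}. Since $\dl$ meets the closure of at least one bottom-dimensional face $\tau$, both $i,j \in I_\tau$, and thus $\Prm_{i,j}$ appears as $\Prm_{i,j}^{(\tau)}$ in the local cover at $\tau$. Using the genericity assumption (no two top-dimensional faces share a primitive of the cover), this shows that the weighted union $\Sur \multU \bigcup_\tau \comp{\Sur_\tau}$ contains every primitive of the global cover $\tP(\Sur)$, and is therefore a primitive cover of $\Sur$. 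Consequently, $\bigcup_\tau \comp{\Sur_\tau}$ is a supplement of $\Sur$.

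Finally, after applying primitive reduction, the resulting variety $\red{\bigcup_\tau \comp{\Sur_\tau}}$ is a supplement of $\Sur$ from which every extractable primitive has been removed, hence is minimal by Claim~\ref{clm:minimalRedCom}. Lemma~\ref{lem:mininalComp} then forces it to coincide with $\red{\comp{\Sur}}$. The main obstacle, which is where genericity is used essentially, is the bookkeeping of multiplicities: a given primitive $\Prm_{i,j}$ may arise in the local supplement of several different bottom-dimensional faces $\tau$, each contributing a weight governed by the local factorization of $f_\tau$ via Corollary~\ref{cor:permprime2}. Verifying that the primitive reduction step extracts exactly the redundant copies and leaves the weight prescribed by Remark~\ref{rmk:reducedComp} is the technical heart of the argument; for generic $\Sur$, each top-dimensional face contributes through a unique $(i,j)$ and a unique incident $\tau$, so this bookkeeping reduces to the uniqueness statement already in hand.
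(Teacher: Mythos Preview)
Your proposal is correct and follows essentially the same route as the paper: show that the weighted union of the local supplements is itself a supplement of $\Sur$ (because every face of $\Sur$ contains some bottom-dimensional face $\tau$), then apply primitive reduction and invoke the uniqueness of the minimal supplement (Lemma~\ref{lem:mininalComp} and Claim~\ref{clm:minimalRedCom}). One small correction: your final assertion that for generic $\Sur$ each top-dimensional face has a \emph{unique} incident bottom-dimensional face $\tau$ is false in general (e.g.\ the bounded edge of the conic in Example~\ref{exm:2} meets both vertices), but this does not damage the argument, since the primitive-reduction step removes all redundant copies of a primitive regardless of how many local supplements contribute it, and uniqueness of the minimal supplement then finishes the proof exactly as you and the paper both indicate.
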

\begin{proof} Each of the faces of $\Sur$ contains at least one of
the bottom-dimensional faces $\tau$; thus, it enough to take the
\supplement\ along these faces to get a \completion\ of $\Sur$.
Taking the primitive reduction of
$\overset{\operatorname{w}}{\cup_\tau} \; \comp{\Sur_\tau}$ we get
a minimal \supplement\ of $\Sur$, which is unique by Lemma
\ref{lem:mininalComp} and thus equal to $\red{\comp{\Sur}}$; cf.~
Claim  \ref{clm:minimalRedCom}.
\end{proof}

Given a bottom-dimensional face $\tau$ of a tropical  variety
$\Var = \bigcap_i \Sur_i$, locally $\Var_\tau = \bigcap_{i,\sig}
\Sur_{i,\sig}$, where $\sig$ is a top-dimensional  face of
$\Sur_i$ that contains $\tau$. Combining Corollary
\ref{cor:coverOfVar} and Theorem \ref{thm:LocalGlobal} we
conclude:

\begin{corollary} Given a generic tropical variety  $\Var = \bigcap_i
\Sur_i$, then 
$$ \red{\comp{\Var}} \eqM
\red{\overset{\operatorname{w}}{\bigcup_\tau} \; \comp{\Var_\tau}
} \ , $$
where $\tau$ runs over all the    bottom-dimensional faces of
$\Var$.
\end{corollary}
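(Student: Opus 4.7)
The plan is to chain together the two ingredients explicitly listed just before the statement: Corollary \ref{cor:coverOfVar}, which says that taking \supplement s commutes with finite intersection of hypersurfaces (up to primitive reduction, using Claim \ref{clm:minimalRedCom} to identify the minimal \supplement\ with the reduced one), and Theorem \ref{thm:LocalGlobal}, which gives the local-to-global identity for a single generic hypersurface. In outline, I would first replace $\red{\comp{\Var}}$ by $\bigcap_i \red{\comp{\Sur_i}}$ via Corollary \ref{cor:coverOfVar}, then expand each factor using Theorem \ref{thm:LocalGlobal}, and finally reorganize the resulting intersection of weighted unions as a weighted union of intersections indexed by the bottom-dimensional faces $\tau$ of $\Var$.

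More concretely, for each hypersurface $\Sur_i$, Theorem \ref{thm:LocalGlobal} gives
$$\red{\comp{\Sur_i}} \eqM \red{\overset{\operatorname{w}}{\bigcup_{\tau_i}} \; \comp{(\Sur_i)_{\tau_i}}},$$
where $\tau_i$ runs over the bottom-dimensional faces of $\Sur_i$. Intersecting over $i$ and distributing $\bigcap$ over $\MultU$ yields a weighted union indexed by tuples $(\tau_1,\dots,\tau_s)$ of local faces, with summand $\bigcap_i \comp{(\Sur_i)_{\tau_i}}$. Only those tuples that share a common point contribute non-trivially; by the remark preceding the corollary, such a common locus is exactly a bottom-dimensional face $\tau$ of $\Var$, and $\Var_\tau = \bigcap_{i,\sig} \Sur_{i,\sig}$ where the $\sig$ are the top-dimensional faces of the $\Sur_i$ meeting at $\tau$. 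So for that $\tau$, another application of Corollary \ref{cor:coverOfVar} (localized to the neighbourhood $B(\bfa)$ of a point of $\tau$) identifies $\bigcap_i \comp{(\Sur_i)_{\tau_i}}$ with $\comp{\Var_\tau}$.

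Substituting this back into the distributed expression and applying primitive reduction turns it into $\red{\overset{\operatorname{w}}{\bigcup_\tau} \; \comp{\Var_\tau}}$, which is the right-hand side of the claimed identity. The minimality of the resulting \supplement\ then follows from Lemma \ref{lem:mininalComp}, and its agreement with $\red{\comp{\Var}}$ comes from Claim \ref{clm:minimalRedCom}.

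The main obstacle I expect is the bookkeeping with weights and primitive reduction: one has to make sure that when intersecting the weighted unions $\MultU_{\tau_i}$ for different $i$, the surviving summands correspond bijectively to bottom-dimensional faces $\tau$ of $\Var$ with the correct multiplicities, and that the primitive reductions of both sides coincide. Genericity of $\Var$ is used here in the same way it is used in Theorem \ref{thm:supplementSur} and Theorem \ref{thm:LocalGlobal}, to guarantee that intersections of local \supplement s do not create spurious top-dimensional overlaps that would alter multiplicities. Once this combinatorial matching of faces and multiplicities is verified, the two applications of the already-established results close the argument immediately.
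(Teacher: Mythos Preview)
Your proposal is correct and follows the same route as the paper: the paper's entire argument is the sentence ``Combining Corollary \ref{cor:coverOfVar} and Theorem \ref{thm:LocalGlobal} we conclude,'' and you have simply unpacked how those two results are combined, using the description $\Var_\tau = \bigcap_{i,\sig} \Sur_{i,\sig}$ given just before the statement. Your elaboration of the bookkeeping (distributing $\bigcap$ over the weighted unions, matching tuples of local faces to bottom-dimensional faces of $\Var$, and invoking Lemma \ref{lem:mininalComp} and Claim \ref{clm:minimalRedCom} for uniqueness of the reduced \supplement) is more detail than the paper provides, but entirely in the spirit of its one-line justification.
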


\subsection{\Supplement  al  duality}  A dual correspondence for
tropical hypersurfaces is established by taking the reduced
\supplement .

\begin{theorem} The reduced \supplement\ of a tropical hypersurface admits
a duality; i.e., for any $\Sur \subset \Real^{(n)}$
\begin{equation}\label{eq:dualCom}
 \red{\comp{\OP\red{\comp{\Sur}}\CP}} \eqM \red{\Sur}.
\end{equation}
\end{theorem}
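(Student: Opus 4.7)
The plan is to reduce to the primitively irreducible case and then invoke Lemma~\ref{lem:compOfComp}. By Claim~\ref{clm:minimalRedCom} the reduced supplement coincides with the minimal supplement, so $\red{\comp{\Sur}} \eqM \mincomp{\Sur}$. Moreover $\mincomp{\Sur}$ is itself primitively irreducible, since primitive reduction has removed all of its primitive components; applying Claim~\ref{clm:minimalRedCom} once more shows that the left-hand side of \eqref{eq:dualCom} equals $\mincomp{\bigl(\mincomp{\Sur}\bigr)}$.

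The key intermediate step will be to establish $\mincomp{\Sur} \eqM \mincomp{\red{\Sur}}$. Writing the primitive decomposition $\Sur \eqM \red{\Sur} \multU \Prm_1 \multU \cdots \multU \Prm_k$, I would argue that a minimal primitive cover of $\red{\Sur}$ can be promoted to a primitive cover of $\Sur$ by augmenting the multiplicity along each $\Prm_i$, and conversely that any minimal primitive cover of $\Sur$ contains each $\Prm_i$ with multiplicity at least that of $\Sur$. Subtracting $\Sur$ and $\red{\Sur}$ respectively to form the supplements, the two resulting objects differ only in their multiplicities along the $\Prm_i$, and these differences are erased by primitive reduction. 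Uniqueness of the minimal supplement (Lemma~\ref{lem:mininalComp}) then pins down the desired equality.

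Since $\red{\Sur}$ is primitively irreducible by construction, Lemma~\ref{lem:compOfComp}(ii) applies to give $\mincomp{\bigl(\mincomp{\red{\Sur}}\bigr)} \eqM \red{\Sur}$, and chaining the three identifications yields
$$\red{\comp{\OP\red{\comp{\Sur}}\CP}} \eqM \mincomp{\bigl(\mincomp{\Sur}\bigr)} \eqM \mincomp{\bigl(\mincomp{\red{\Sur}}\bigr)} \eqM \red{\Sur},$$
which is the claimed duality. The asymmetric appearance of $\red{\Sur}$ rather than $\Sur$ on the right is forced, since the reduced supplement operation irrevocably destroys the primitive components of its input. I expect the main obstacle to be the intermediate equality $\mincomp{\Sur} \eqM \mincomp{\red{\Sur}}$: a direct appeal to Lemma~\ref{lem:compOfComp} is blocked because that lemma demands primitive irreducibility of its input, so one must carry out a careful multiplicity bookkeeping along each primitive factor to confirm that the excess contributions really do cancel under the final primitive reduction.
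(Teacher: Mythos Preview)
Your approach is essentially the paper's: both identify the reduced supplement with the minimal supplement via Claim~\ref{clm:minimalRedCom} and then combine Lemma~\ref{lem:compOfComp} with the uniqueness in Lemma~\ref{lem:mininalComp}. The paper simply opens with ``We may assume that $\Sur$ is primitively irreducible'' and proceeds directly, whereas you make that reduction explicit through the intermediate equality $\mincomp{\Sur} \eqM \mincomp{\red{\Sur}}$; so you are filling in a detail the paper leaves implicit.
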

\begin{proof} We may assume that $\Sur$ is primitively irreducible.
$\red{\comp{\Sur}}$ is the minimal \supplement\ of $\Sur$; cf.~
Claim~\ref{clm:minimalRedCom}. Conversely, $\Sur$ is minimal
\supplement\ of $\red{\comp{\Sur}}$, cf. Lemma
\ref{lem:compOfComp}. On the other hand
$\red{\comp{\red{\comp{\Sur}}}}$ is also minimal \supplement\ of
$\red{\comp{\Sur}}$, which is unique, cf. Lemma
\ref{lem:mininalComp}.
\end{proof}

\begin{example} The dual curves for the Examples \ref{exm:1}
 and \ref{exm:3} are precisely their \supplement s, which we recall are
 minimal; see Figs \ref{fig:1} and \ref{fig:3} respectively.
The dual curve of Example \ref{exm:2} is obtained by extracting
$Z(p_6)$ from the  \supplement \ of $Z(f)$, drawn in dotted and
dashed lines, see Fig \ref{fig:2}.

\end{example}

\begin{corollary}\label{dor:dualVar} Given a tropical variety  $\Var = \bigcap_i
\Sur_i$ then  $ \red{\comp{\OP\red{\comp{\Var}}\CP}}\eqM
\red{\Var}$.
\end{corollary}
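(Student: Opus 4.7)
The plan is to deduce the variety-level duality from the already-established hypersurface duality, Equation~\Ref{eq:dualCom}, by applying it componentwise, exploiting the formula from the preceding corollary that expresses the reduced \supplement\ of an intersection as the intersection of reduced \supplement s.

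Writing $\Var = \bigcap_i \Sur_i$, I first invoke the preceding corollary to obtain $\red{\comp{\Var}} \eqM \bigcap_i \red{\comp{\Sur_i}}$. Since each $\red{\comp{\Sur_i}}$ is itself a tropical hypersurface, the variety $\red{\comp{\Var}}$ is presented anew as an intersection of hypersurfaces, so a second application of the same corollary yields
\[
\red{\comp{\OP\red{\comp{\Var}}\CP}} \;\eqM\; \bigcap_i \red{\comp{\OP\red{\comp{\Sur_i}}\CP}}.
\]
The hypersurface duality (Equation~\Ref{eq:dualCom}), applied to each index $i$, gives $\red{\comp{\OP\red{\comp{\Sur_i}}\CP}} \eqM \red{\Sur_i}$; intersecting in $i$ then produces $\red{\comp{\OP\red{\comp{\Var}}\CP}} \eqM \bigcap_i \red{\Sur_i}$.

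The remaining step is to match this with $\red{\Var}$ up to $\eqM$. This is the main obstacle, because the cautionary remark just before \Ref{eq:modPrim} warns that $\bigcap_i \red{\Sur_i}$ is in general not the literal reduction of $\bigcap_i \Sur_i$. I plan to overcome it by uniqueness of the minimal \supplement\ (Lemma~\ref{lem:mininalComp}): by construction together with Claim~\ref{clm:minimalRedCom}, the expression $\red{\comp{\OP\red{\comp{\Var}}\CP}}$ is a minimal \supplement\ of $\red{\comp{\Var}}$; on the other hand, after replacing $\Var$ by its reduction (which preserves $\red{\comp{\Var}}$, since extra primitives sitting inside $\Var$ do not alter its minimal \supplement), the corollary following Lemma~\ref{lem:compOfComp} identifies $\red{\Var}$ itself as a minimal \supplement\ of $\red{\comp{\Var}}$. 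Uniqueness then forces these two minimal \supplement s to coincide weightedly, yielding $\red{\comp{\OP\red{\comp{\Var}}\CP}} \eqM \red{\Var}$, as required.
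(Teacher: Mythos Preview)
The paper states this corollary without proof, evidently regarding it as immediate from the hypersurface duality \eqref{eq:dualCom} applied componentwise via the identity $\red{\comp{\Var}} = \bigcap_i \red{\comp{\Sur_i}}$; your argument follows precisely this route. You go beyond the paper in confronting the discrepancy between $\bigcap_i \red{\Sur_i}$ and $\red{\Var}$ (which the paper itself flags but then passes over), and your resolution via uniqueness of the minimal \supplement\ is sound---indeed, your final paragraph already proves the corollary outright, making the componentwise computation of steps~1--3 redundant.
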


We call the relation in  Corollary \ref{dor:dualVar}, the
\textbf{\supplement  al\ duality} of tropical varieties. This
duality is quite general; note that although the dual of a variety
has the same dimension,  a variety need not to be of the same type
as its dual. For example:
\begin{itemize}
 \item The dual of an irreducible variety might be
irreducible, or vise versa, cf. Example \ref{exm:2}; \pSkip
    \item  The dual of a curve of genus 1 (which, in tropical sense, is not a
rational variety) can be a rational cure, cf. Example \ref{exm:3}.
\pSkip
\item  a tropical variety and its dual might be of different combinatorial types,
cf. Example \ref{exm:3}. \pSkip
\end{itemize}

\section{The reversal isomorphism and its geometric interpretation}

The \supplement \  can be understood better from the decomposition
of Formula \eqref{eq:main}, by means of another algebraic tool.
Before introducing this tool, we pass to a more convenient
semiring than the polynomial ring. The motivation is that the
monomial $\al_\bfi \Lm^\bfi$ has no tropical roots other than
$\minf$. Thus, when considering nonzero roots (or when studying
projective tropical geometry) one could multiply or divide the
polynomials defining the variety by powers of the $ \lm_i$ without
affecting the variety. This observation often enables us to
``clean up'' some of the computations, by erasing powers of the
$\la_i$ that arise for example in Remark \ref{grow}(ii),(iii).

Accordingly, it is just as natural to work with
 the semiring $\Trop[\lm_1, \lm_1^{-1}, \dots, \lm_n, \lm^{-1}_n]$ of \textbf {Laurent
 polynomials},
 defined
just as polynomials except that powers of the $\la_i$ may be taken
to be negative integers as well. Given any Laurent polynomial,
denoted  $f(\la_1, \dots, \la_n),$ one can define the natural
substitution $f(\bfa),$ for $\bfa =  (a_1,\dots,a_n) \in
\Trop^{(n)}.$ (Indeed, $a_k^{-i_k}$ can be viewed as the inverse
of $a_k^{i_k}$.)

\begin{remark}\label{algebrset}
 There is a natural isomorphism, which we denote as
\begin{equation}\label{eq:dual*}
\dual : \Trop[\lm_1, \lm_1^{-1}, \dots, \lm_n, \lm^{-1}_n]  \ \To
\ \dTrop[\lm_1, \lm_1^{-1}, \dots, \lm_n, \lm^{-1}_n],
\end{equation}  given by
$\al \mapsto \al^{-1}$ for each $\al \in \Trop$ and $\lm_i \mapsto
\lm_i^{-1}$ for each $i$. (Thus, for any monomial $h$, $h^* =
\frac 1h.$) We call $f^*$ the \textbf{reversal} of $f$. Clearly,
by definition, $(f^*)^* = f$, so we have a duality, which also
induces a duality of the geometry.

To understand the connection between the algebra and the geometry
here, we note that $\dual$ reverses the order of values in the
monomials, and thus switches (max, plus) with (min, plus).
\end{remark}

Given a finitely generated ideal $I = \langle f_1,
\dots,f_m\rangle$, we write $I^*$ for  $\langle f_1^*,
\dots,f_m^*\rangle$ and call it the \textbf{reversal ideal} of
$I$.

When $f = \sum f_i $ , the product of the $f_i$'s is denoted
\begin{equation}\label{eq:fnotation2}
 \pfi{f} = \prod_i f_i \ .
 \end{equation}

\begin{example}
If $f =  f_i+f_j$ for monomials $f_i, f_j$, we have
$(f_i+f_j)^\dual = f_i^* +  f_j^*  = \frac 1{f_i}+\frac 1{f_j} =
\frac{f_i + f_j}{f_i f_j},$ which has the same variety as $f$.
Thus, the reversal of a binomial has the same variety as the
binomial.

 More generally, writing $f = \sum_{i=1}^m f_i $ as a sum of monomials
$f_i$,  recalling the notation $$\trn{f} = f^{(m-1)} = \sum _i
\prod _{j \ne i} f_j, \qquad \overline{f} = \per{V_f} =
\prod_{j<i}(f_i + f_j),$$ we have
$$
\begin{array}{lllllll}
 \overline{\fdual{f}} &  = & \prod_{i \neq j } \OP f_i^* +  f_j^*
\CP  & =  &  \prod_{i \neq j } \frac{f_i + f_j}{f_i f_j} \ \ = \ \
\frac{\bar f}{ \pfi{f}^{m-1}} \ ; \\[1mm]%
f^{(m-u)} & = & \prod \frac{\pfi{f}}{f_{i_1}\cdots f_{i_u}} & = &
 \pfi{f}^{\binom m u} (f^*)^{(u)} \quad  \forall u \ ; \\[1mm]
\trn{f} & = & f^{(m-1)} & = &  \pfi{f} \fdual{f} \ ; \\[1mm]
\overline{\trn{f}}  & =  &  \prod_{i \neq j } \OP \sum
\frac{\pfi{f}}{f_i}  \CP  &  = &
\pfi{f}^{\frac{n(n-1)}{2}}\overline{\fdual{f}}. \end{array}$$

\end{example}

To understand the connection between the algebra and the geometry
here, we note that $\dual$ reverses the order of values in the
monomials; thus, $Z(\fdual{f}) = Z(f^{(m-1)}).$

\begin{definition} The \textbf{reversal}  of a tropical hypersurface $\Sur =
Z(f)$ is defined as $\rvl{\Sur} = Z(f^*)$,  with $f^*$ as defined
above.  The reversal $\rvl{\Var}$ of a tropical variety $\Var =
\bigcap \Sur_i$ is then $\rvl{\Var} = \bigcap \rvl{\Sur_i}$.
\end{definition}
Note that reversals are defined for any tropical variety $\Var$,
not necessarily for starred or irreducible varieties. In
particular, if $\Var$ is a tropical primitive, then $\Var =
\rvl{\Var}$ and $\Var$ is self dual. From Remark \ref{algebrset}
we can conclude immediately:
\begin{corollary}\label{cor:rvlravl}
If $\Sur$ is a tropical surface, then $\rvl{(\rvl{\Sur})}  =
  \Sur.$ When $\Var = \bigcap \Sur_i$ is a tropical variety,  $\rvl{(\rvl{\Var})}  =
 \Var.$
\end{corollary}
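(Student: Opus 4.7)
The plan is to reduce everything to the involutive property $(f^\dual)^\dual = f$ for a Laurent polynomial $f$, which was already recorded in Remark~\ref{algebrset} as part of the statement that $\dual$ is a duality. Once that algebraic involution is in hand, the geometric statement is forced by the way reversal was defined on hypersurfaces and then extended to varieties.

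First I would treat the hypersurface case. Writing $\Sur = Z(f)$, the definition gives $\rvl{\Sur} = Z(f^\dual)$, and applying the definition once more yields $\rvl{(\rvl{\Sur})} = Z((f^\dual)^\dual)$. The key observation is that $\dual$ sends $\al \mapsto \al^{-1}$ and $\lm_i \mapsto \lm_i^{-1}$, so applying it twice restores every coefficient and every exponent; hence $(f^\dual)^\dual = f$ as Laurent polynomials. Consequently $Z((f^\dual)^\dual) = Z(f) = \Sur$, giving the first assertion.

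For the general case, $\Var = \bigcap \Sur_i$, I would just unwind the two definitions in sequence: by definition $\rvl{\Var} = \bigcap \rvl{\Sur_i}$, so
\[
\rvl{(\rvl{\Var})} \;=\; \bigcap \rvl{(\rvl{\Sur_i})} \;=\; \bigcap \Sur_i \;=\; \Var,
\]
where the middle equality uses the hypersurface case already established.

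I do not expect a genuine obstacle here, since the statement is essentially a bookkeeping consequence of Remark~\ref{algebrset}; the only mild subtlety is to note that the reversal map is defined on the Laurent polynomial semiring (not just on polynomials), which is precisely why $(f^\dual)^\dual$ literally equals $f$ on the nose rather than merely up to a monomial factor. With that point acknowledged, both equalities are immediate.
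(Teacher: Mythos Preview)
Your argument is correct and follows exactly the paper's approach: the paper simply states that the corollary follows immediately from Remark~\ref{algebrset}, and your proposal spells out precisely that deduction by invoking $(f^\dual)^\dual = f$ and unwinding the definitions for hypersurfaces and then for intersections. If anything, your write-up is more detailed than the paper's one-line justification.
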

We call the relation in  Corollary \ref{cor:rvlravl}, the
\textbf{\reversal \ duality} of tropical varieties. Clearly, for
$\Var$ and $\rvl{\Var}$ we have the following properties:
\begin{itemize}
    \item $\Var$ and $\rvl{\Var}$ are isomorphic of the same dimension, \pSkip

    \item they are of a same combinatorial type,  \pSkip

    \item the weights of top-dimensional reversal faces of $\Var$ and $\rvl{\Var}$ are equal.

\end{itemize}

\begin{remark} In  Remark \ref{lowess}, we considered $\tTrop[\lm_1,\dots,\lm_n]
\times \tdTrop[\lm_1,\dots,\lm_n]$. In light of
Remark~\ref{algebrset}, to see the entire picture, one should view
this in $\tTrop[\lm_1,\lm_1^{-1},\dots,\lm_n,\lm_n^{-1}] \times
\tdTrop[\lm_1,\lm_1^{-1},\dots,\lm_n,\lm_n^{-1}]$, in which the
isomorphism $\dual$ becomes an automorphism of degree 2.
\end{remark}

For deformations of surfaces, a geometric approach to duality has
been suggested recently by Nisse~\cite{Nisse}.

\section{\Symmetry \ of varieties}


We call a $k$-dimensional plane (in the classical sense) in
$\Real^{(n)}$, for short, $k$-plane. Let us state the definition
of symmetry which is used in this paper:
\begin{definition} A set $\Set \subset \Real^{(n)}$ is said to be
\textbf{point symmetric} if there exists  a point $\bfo \in
\Real^{(n)}$ for which, in the standard notation,
\begin{equation}\label{eq:ptSym} \bfa \in \Set \Dir 2 \bfo - \bfa \in
\Set,
\end{equation}
for any $\bfa \in \Set$.

A set is \textbf{k-plane symmetric}, $ k < (n-1)$, if all of its
restrictions to $(n-k)$-planes orthogonal to a fixed $k$-plane
$\pi$ are point symmetric (with respect to a point $\bfo \in
\pi$). We say that a set $\sym{S}$ is a $k$-plane
\textbf{\symmetry } of $\Set$ if their union is $k$-plane
symmetric.
\end{definition}

Let us describe explicitly the action of the isomorphism
\eqref{eq:dual*} on a monomial $f_i = \al_\bfi \Lm^\bfi $:

\begin{equation}\label{eq:minusMon}
    f_i^{-1}(\bfa) = \frac 1 {\al_\bfi \bfa^\bfi} = \frac{f_i(\bfa^{-1})}{ \al_\bfi^2} \ .
\end{equation}
Tropically, we also write $\bfa^2$ for  $(a_1^2,\dots, a_n^2)$ and
have the relation
\begin{equation}\label{eq:invMon}
 f_i\OP\bfa^2 \CP = \al_\bfi \bfa^{2\bfi} =
\frac{f_i(\bfa)^2}{ \al_\bfi},
\end{equation}
for any $\bfa, \bfb \in \Real^{(n)}$ .

\begin{lemma}\label{lem:sym} Let $\Sur = Z(f) \subset \Real^{(n)}$, $f = \sum_i \al_\bfi \Lm_\bfi$,
be a  primitively irreducible starred  hypersurface of bottom
dimension $0$. Then $\trn{\Sur} = Z(\trn{f})$ is the point
\symmetry \ of $\Sur$, and vise versa.

\end{lemma}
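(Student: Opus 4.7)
The plan is to reduce the geometric statement to a direct algebraic calculation in two steps: first replace $\trn{f}$ by $f^*$ on $\Real^{(n)}$, and then verify that $Z(f^*)$ is literally the reflection of $Z(f)$ through the unique vertex $\bfo$ of $\Sur$.

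First I would rewrite $\trn{f}$ via the reversal. By the identity $\trn{f} = f^{(m-1)} = \pfi{f}\,\fdual{f}$ recorded earlier, and since $\pfi{f}=\prod_i f_i$ is a single tropical monomial (hence has no zeros in $\Real^{(n)}$), one obtains
\[
Z(\trn{f})\cap\Real^{(n)} \;=\; Z(\fdual{f})\cap\Real^{(n)}.
\]
Thus it is enough to show that $Z(\fdual{f})$ is the point symmetry of $Z(f)$ with respect to the bottom-dimensional vertex $\bfo$ of $\Sur$.

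Next I would extract from the hypotheses that every monomial of $f$ attains its value $f(\bfo)$ at $\bfo$. Since $\Sur$ is starred, Note~\ref{nt:polytopeOfBinomial}(3) forces the subdivision $S(f)$ of the Newton polytope $\Delta$ to be empty, so every essential monomial $f_i$ of $f$ corresponds to a vertex of $\Delta$. By the combinatorial duality (a) of Section~\ref{nsec2}, each such vertex of $S(f)$ corresponds to a (necessarily unbounded) component of $\Real^{(n)}\setminus Z(f)$; while the $0$-dimensional face $\bfo$ of $\Sur$ is dual to a top-dimensional face of $S(f)$, which here can only be $\Delta$ itself. Primitive irreducibility ensures that no face is split off by a binomial factor, so every vertex of $\Delta$ really is incident to $\bfo$ in the dual sense. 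Algebraically this says $f_i(\bfo)=f(\bfo)$ for each essential monomial $f_i$; denote this common value by $c$.

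The algebraic core is then a one-line identity. For $f_i=\al_\bfi\Lm^{\bfi}$ and any $\bfa\in\Real^{(n)}$, a direct computation in the standard notation gives
\[
f_i(2\bfo-\bfa) \;=\; \al_\bfi + \langle\bfi,2\bfo-\bfa\rangle \;=\; -\al_\bfi - \langle\bfi,\bfa\rangle \;+\; 2\bigl(\al_\bfi+\langle\bfi,\bfo\rangle\bigr) \;=\; f_i^\dual(\bfa)+2f_i(\bfo).
\]
Because $f_i(\bfo)=c$ is independent of $i$, the constant $2c$ shifts every monomial by the same amount, hence preserves the argmax; so the set of indices $i$ where $f_i(2\bfo-\bfa)$ attains $f(2\bfo-\bfa)$ coincides with the set where $f_i^\dual(\bfa)$ attains $f^\dual(\bfa)$. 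Consequently $2\bfo-\bfa\in Z(f)$ iff $\bfa\in Z(\fdual{f})$, i.e.\ $Z(\fdual{f})=2\bfo-Z(f)$. The shift-by-constant also matches the weights on corresponding top-dimensional faces, so $\trn{\Sur}=Z(\trn{f})$ is the point symmetry of $\Sur$ with respect to $\bfo$. The converse direction is immediate from $(\fdual{f})^{\dual}=f$ (Remark~\ref{algebrset}).

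The main obstacle is the combinatorial step: extracting from ``starred of bottom dimension $0$ and primitively irreducible'' that every essential monomial of $f$ takes the same value $c$ at $\bfo$. Once this is established, the identity $f_i(2\bfo-\bfa)=f_i^\dual(\bfa)+2f_i(\bfo)$ does all the geometric work in a single stroke.
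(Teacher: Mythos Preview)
Your proof is correct and follows essentially the same route as the paper's: both reduce $Z(\trn{f})$ to $Z(\fdual{f})=Z(\sum_i f_i^{-1})$ via the monomial factor $\pfi{f}$, both use that all monomials agree at the vertex $\bfo$ (i.e.\ $f_i(\bfo)=c$ for every $i$), and both finish with the same one-line computation relating the monomial values at $\bfa$ and at the reflected point $2\bfo-\bfa$. The only cosmetic differences are that the paper carries out the computation in tropical notation (writing the reflected point as $\bfo^2/\bfa$ and obtaining $f_i^{-1}(\bfo^2/\bfa)=f_i(\bfa)/c^2$), whereas you work in standard notation and phrase the same identity as $f_i(2\bfo-\bfa)=f_i^{\dual}(\bfa)+2c$; and you supply a more explicit combinatorial justification for the equality $f_i(\bfo)=c$, which the paper simply asserts.
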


\begin{proof} Let $\bfo \in Z(f)$, $f = \sum f_i$, be the face of
bottom dimension $0$, in particular
\begin{equation}\label{eq:oVal} f(\bfo)
= f_1(\bfo)= \cdots = f_m(\bfo) = c.
\end{equation}
Assume $\bfa \in Z(f)$, then $f(\bfa) = f_i(\bfa)= f_j(\bfa)$ for
some $i$ and $j$.   Since $Z(\trn{f}) = Z(\sum_i f_i^{-1})$, we
can rewrite condition \Ref{eq:ptSym} tropically, and need to prove
$\frac{\bfo^2}{\bfa} \in Z(\sum_i f_i^{-1}) $. Indeed, using
Equations \Ref{eq:minusMon} and \Ref{eq:invMon} we have,
$$\sum_i f_i^{-1}\left(\frac{\bfo^2}{\bfa}\right)
%
%
 =
\sum_i \al_\bfi^{-2} f_i \OP \frac{\bfa}{\bfo^{2}}\CP =
\sum_i \al_\bfi^{-1}   \frac{f_i(\bfa)}{f_i(\bfo^{2})}
= \sum_i  \frac{f_i(\bfa)}{f_i(\bfo)^2}.
$$
But, $f_i(\bfo)^2 = c^2$ for each $i$,  cf.  Equation
\Ref{eq:oVal}, and this completes the proof.
 \end{proof}

\begin{theorem}\label{thm:symmOfSur} The  primitively irreducible tropical hypersurface $\trn{\Sur}= Z(\trn{f})$ is
the $k$-plane \symmetry\ of a tropical starred  hypersurface $\Sur
= Z(f)$, i.e. $\trn{\Sur} = \sym{\Sur}$.
\end{theorem}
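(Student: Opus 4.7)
The plan is to adapt the proof of Lemma~\ref{lem:sym}, which handled the case of bottom-dimension~$0$, to an arbitrary bottom-dimensional face $\tau$ of dimension $k$. Let $\pi$ denote the affine $k$-plane spanned by $\tau$. The key structural observation is that every essential monomial $f_i$ of $f$ takes a common value $c(\bfo)$ on all of $\pi$; that is, $f_1(\bfo)=\cdots=f_m(\bfo)=c(\bfo)$ for every $\bfo\in\pi$, where $c$ depends linearly on $\bfo$. This follows from the duality between $\tau$ and the Newton polytope $\Delta$ itself (since $S(f)$ is empty by Note~\ref{nt:polytopeOfBinomial}(3)), together with the observation that translations along $\pi$, which is the orthogonal complement of the affine span of the exponent vectors, contribute the same increment $\langle \bfi,\bfo-\bfo_0\rangle$ to each $f_i(\bfo)$ for any fixed reference point $\bfo_0 \in \tau$.

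To verify the $k$-plane symmetry, I would pick an arbitrary $(n-k)$-plane $\pi'$ orthogonal to $\pi$, set $\bfo=\pi'\cap\pi \in \pi$, and then repeat the computation of Lemma~\ref{lem:sym} almost verbatim: for any $\bfa\in\pi'$,
\begin{equation*}
\sum_{i} f_i^{-1}\left(\frac{\bfo^{2}}{\bfa}\right) \;=\; \sum_{i}\frac{f_i(\bfa)}{f_i(\bfo)^{2}} \;=\; \frac{1}{c(\bfo)^{2}}\sum_{i} f_i(\bfa) \;=\; \frac{f(\bfa)}{c(\bfo)^{2}}.
\end{equation*}
Since multiplying by the constant $c(\bfo)^{-2}$ does not affect which monomials attain the maximum, the tropical reflection $\bfa\mapsto \bfo^{2}/\bfa$ (in standard notation, $\bfa\mapsto 2\bfo-\bfa$) yields a bijection $\Sur\cap\pi'\leftrightarrow\trn{\Sur}\cap\pi'$. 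Since $\bfo$ and $\bfa$ both lie in $\pi'$, the reflection preserves $\pi'$, and involutivity gives the reverse inclusion. Hence $(\Sur\cup\trn{\Sur})\cap\pi'$ is point-symmetric about $\bfo\in\pi$ for every such $\pi'$, which is exactly the defining condition $\trn{\Sur}=\sym{\Sur}$.

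I expect the main obstacle to be the structural claim in the first paragraph: verifying that every essential monomial of $f$ contributes the common value $c(\bfo)$ on all of $\pi$, not merely at isolated points of $\tau$. The starred hypothesis guarantees a unique bottom-dimensional face, hence $S(f)$ is empty, but one must still argue that no essential monomial becomes ``inactive'' along $\pi$; this is where primitive irreducibility enters, ruling out binomial factors whose absorption could leave behind such inactive monomials and thereby destroy the uniform equality $f_i(\bfo)=c(\bfo)$. Once this point is pinned down (either via the Legendre dual description of the subdivision or directly from the definition of bottom-dimensional face for starred hypersurfaces), the remainder is a direct generalization of the computation of Lemma~\ref{lem:sym}, and the global theorem follows by quantifying $\pi'$ over all $(n-k)$-planes orthogonal to $\pi$.
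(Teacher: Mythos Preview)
Your proposal is correct and follows essentially the same approach as the paper: the paper's proof simply observes that $\Sur$ has a single bottom-dimensional face $\tau$ (a $k$-plane), then applies Lemma~\ref{lem:sym} to the restrictions of $\Sur$ to the $(n-k)$-planes orthogonal to $\tau$. Your version unpacks that application of Lemma~\ref{lem:sym} explicitly and flags the structural point (uniform equality of the $f_i$ along $\pi$) that the paper leaves implicit, but the underlying argument is the same.
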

\begin{proof} $\Sur$ is starred, thus has a single face  $\tau \subset \Sur$ of bottom
dimension, which is a $k$-plane. Consider its orthogonal
$(n-k)$-planes, and apply Lemma \ref{lem:sym} to the restrictions
of $\Sur$ to these $(n-k)$-planes.
\end{proof}

\begin{note} In the case of $\Real^{(2)}$, for starred  curves, the
minimal \supplement\ and the point \symmetry\ coincide, and thus
by Theorem \ref{thm:symmOfSur} we provide the explicit polynomial
that determines the minimal pure \supplement\ for this class of
curves, i.e. $\comp{\Cur} = Z(\trn{f})$. But, in dimension $3$ or
higher, $\sym{\Sur}$ is purely contained in $\comp{\Sur}$ even for
the simplest case of a non-degenerate tropical hyperplane (i.e.
non-primitive hyperplane).
\end{note}

\begin{corollary}
If $\Var = \bigcap \Sur_i$ is a tropical starred   variety, where
$\Sur_i$ are tropical hypersurfaces,   then $\sym{\Var} = \bigcap
\sym{H_i}$.
\end{corollary}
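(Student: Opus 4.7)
The plan is to reduce the statement to the hypersurface case already handled by Theorem \ref{thm:symmOfSur}, and then exploit the fact that the symmetry operation is realized by an affine reflection that commutes with set-theoretic intersection.

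First, since $\Var$ is starred with a single bottom-dimensional face $\tau$ of dimension $k$, I would apply Lemma \ref{lem:sSurTosVar} to replace the given presentation by one in which each $\Sur_i$ is a starred tropical hypersurface whose own bottom-dimensional face $\tau_i$ contains $\tau$. Denote by $\rho_\tau \colon \Real^{(n)} \to \Real^{(n)}$ the orthogonal reflection across the $k$-plane $\tau$. By Theorem \ref{thm:symmOfSur} (and the underlying Lemma \ref{lem:sym}), $\sym{\Sur_i}$ is realized as the reflection of $\Sur_i$ across $\tau_i$. Because a starred hypersurface is a cylinder over its bottom-dimensional face, the reflections $\rho_{\tau_i}$ and $\rho_\tau$ produce the same image of $\Sur_i$, so one obtains $\sym{\Sur_i} \eqM \rho_\tau(\Sur_i)$ for a single reflection $\rho_\tau$ independent of $i$.

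Second, $\rho_\tau$ is an involutive bijection of $\Real^{(n)}$, hence commutes with intersection, giving
\[
\bigcap_i \sym{\Sur_i} \;\eqM\; \bigcap_i \rho_\tau(\Sur_i) \;\eqM\; \rho_\tau\!\left(\bigcap_i \Sur_i\right) \;\eqM\; \rho_\tau(\Var),
\]
and $\rho_\tau(\Var)$ is by definition a $k$-plane symmetry of $\Var$. Since reflection carries weighted top-dimensional faces to weighted top-dimensional faces of equal multiplicity, the identity upgrades to an equality of weighted varieties, yielding $\sym{\Var} \eqM \bigcap \sym{\Sur_i}$, as required. Corollary \ref{cor:coverOfVar} (for $\comp{\Var} = \bigcap \comp{\Sur_i}$) provides a useful precedent for this pattern of ``intersection commutes with duality.''

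The hard part is the compatibility claim in the first step: showing that the different hypersurface reflections $\rho_{\tau_i}$ all agree with the single reflection $\rho_\tau$ when applied to $\Sur_i$. A priori each $\tau_i$ is allowed to strictly contain $\tau$, and in general reflection across a larger affine subspace differs from reflection across a smaller one. The resolution exploits the product structure of a starred hypersurface over its own bottom-dimensional face: in a local frame aligned with the orthogonal decomposition $\tau \oplus (\tau_i \cap \tau^\perp) \oplus \tau_i^\perp$, the hypersurface $\Sur_i$ is independent of the middle summand, so flipping versus not flipping this direction leaves $\rho_\tau(\Sur_i) = \rho_{\tau_i}(\Sur_i)$. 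This compatibility is precisely what allows the individual hypersurface symmetries, constructed via Theorem \ref{thm:symmOfSur}, to assemble into a coherent $k$-plane symmetry of the starred variety $\Var$.
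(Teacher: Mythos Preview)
Your proposal is correct and follows the same route as the paper: reduce to the hypersurface case via Theorem~\ref{thm:symmOfSur} and then intersect. The paper's own proof is a two-sentence sketch (``the bottom-dimensional face of $\Var$ is contained in the intersection of the bottom-dimensional faces of $\Sur_i$; apply Theorem~\ref{thm:symmOfSur} to each $\Sur_i$ and intersect''); your explicit invocation of Lemma~\ref{lem:sSurTosVar} and, especially, your compatibility argument that $\rho_{\tau_i}(\Sur_i)=\rho_\tau(\Sur_i)$ via the cylinder structure are genuine details the paper leaves implicit.
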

\begin{proof} The bottom-dimensional  face of $\Var$ is contained
in the intersection of the bottom-dimensional faces of $\Sur_i$.
Apply Theorem \ref{thm:symmOfSur} to each $\Sur_i$, and consider
the intersection of their symmetries.
\end{proof}
\begin{corollary}\label{cor:symsym}
Suppose $\Var$ is a tropical starred variety, then
$\sym{(\sym{\Var})} \ =
 \ \Var.$
\end{corollary}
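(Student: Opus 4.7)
The plan is to reduce to a single starred hypersurface via the preceding corollary together with Lemma \ref{lem:sSurTosVar}, and then to combine the explicit formula $\sym{\Sur}=Z(\trn{f})$ of Theorem \ref{thm:symmOfSur} with the identity $\trn{f}=\pfi{f}\,\fdual{f}$ to reduce the iterated symmetry to the reversal duality (Corollary \ref{cor:rvlravl}).

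First, using Lemma \ref{lem:sSurTosVar}, write $\Var=\bigcap_i\Sur_i$ with each $\Sur_i$ a starred hypersurface whose unique bottom-dimensional face contains the bottom-dimensional face $\tau$ of $\Var$. The corollary immediately preceding this statement gives $\sym{\Var}=\bigcap_i\sym{\Sur_i}$, and iterating reduces the problem to the hypersurface case: it suffices to prove $\sym{(\sym{\Sur})}\eqM\Sur$ for any starred hypersurface $\Sur$.

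Next, write $\Sur=Z(f)$ with $f=\sum_{i=1}^m f_i$. By Theorem \ref{thm:symmOfSur}, $\sym{\Sur}\eqM Z(\trn{f})$, and from the identity $\trn{f}=\pfi{f}\,\fdual{f}$ recorded earlier (with $\pfi{f}=\prod_i f_i$ a single monomial) one concludes $Z(\trn{f})\eqM Z(\fdual{f})=\rvl{\Sur}$, since multiplication by the monomial $\pfi{f}$ only translates the Newton polytope of $\fdual{f}$, leaving the subdivision and the weights on each top-dimensional face unchanged. I also need to verify that $\sym{\Sur}$ is itself starred about the same face $\tau$; this is immediate from the point-symmetric construction used in the proof of Lemma \ref{lem:sym}, since reflection $\bfa\mapsto 2\bfo-\bfa$ through any $\bfo\in\tau$ fixes $\tau$ setwise. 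Hence Theorem \ref{thm:symmOfSur} applies to $\sym{\Sur}$ as well, and the same argument then yields
\[
\sym{(\sym{\Sur})}\eqM Z(\trn{(\trn{f})})\eqM \rvl{(\rvl{\Sur})}\eqM \Sur
\]
by Corollary \ref{cor:rvlravl}.

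The main obstacle I expect is the weighted bookkeeping encoded in $\eqM$: one must check carefully that the monomial factors introduced by $\trn{f}=\pfi{f}\,\fdual{f}$, and the analogous ones arising in $\trn{(\trn{f})}$, only translate Newton polytopes rather than alter their subdivisions, so that the iterated construction truly returns the original multiplicities on each top-dimensional face and not just the underlying set.
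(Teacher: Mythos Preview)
Your argument is correct. The paper states this corollary without proof; its implicit justification is the purely geometric one already built into Lemma~\ref{lem:sym} via the phrase ``and vise versa'': the $k$-plane symmetry is a reflection, and reflecting twice through the same $k$-plane returns the original set. Combined with the preceding corollary $\sym{\Var}=\bigcap_i\sym{\Sur_i}$, that is the whole argument.

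You take a genuinely different, algebraic route: you identify $\sym{\Sur}\eqM\rvl{\Sur}$ for starred hypersurfaces via the formula $\trn{f}=\pfi{f}\,\fdual{f}$ and then invoke the reversal duality of Corollary~\ref{cor:rvlravl}. This identification is a nice observation that the paper records the ingredients for but never states outright; it explains why the lists of properties following Corollaries~\ref{cor:rvlravl} and~\ref{cor:symsym} are identical. The weighted-equality concern you flag is real but is exactly what Remark~\ref{rmk:muti-dual} handles: multiplying by the monomial $\pfi{f}$ only translates the Newton polytope and leaves the subdivision and face weights unchanged. If you want to bypass the reversal machinery entirely, a direct computation gives $\trn{(\trn{f})}=\pfi{f}^{\,m-2}f$, from which $Z(\trn{(\trn{f})})\eqM Z(f)$ follows immediately by the same remark. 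The paper's route is shorter; yours makes the link between the symmetry and reversal dualities explicit.
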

We call the identification in  Corollary \ref{cor:symsym} the
\textbf{\symmetry \ duality} of starred tropical varieties; for
this duality we have the following properties:
\begin{itemize}
    \item $\Var$ and $\sym{\Var}$ are isomorphic of the same dimension, \pSkip

    \item $\Var$ and $\sym{\Var}$  are of the same combinatorial type,  \pSkip

    \item the weights of top-dimensional symmetric faces of $\Var$ and $\sym{\Var}$ are equal.

\end{itemize}
(The \symmetry \ duality need coincide with the \supplement al
duality only in the case of starred varieties in $\Real^{(2)}$.)

By proving Theorem \ref{thm:symmOfSur} we have also proved the
following  identity of polynomials:
\begin{theorem}\label{thm:strPoly} Suppose $f = \sum_{i=1}^m f_i$ is a polynomial in
$\tTrop[\lm_1,\lm_2]$    with monomials $f_i$ whose Newton
polytope $\Delta$ has  empty subdivision. Then
\begin{equation}\label{eq:id2} f \trn{f}  =
   \(\sum f_i \) \(\sum_i \prod_{j\neq i }f_j \)
    = (f_1 + f_2)(f_2+f_3) \cdots (f_{m-1} + f_{m})(f_{m} + f_{1})
    \, ,
\end{equation}
where the $f_i$'s are ordered according the order of the
corresponding vertices on the Newton polytope of $f$.
\end{theorem}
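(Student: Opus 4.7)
The plan is to verify identity \eqref{eq:id2} by showing that both sides induce the same tropical function on $\Real^{(2)}$; in the essential semiring $\tTrop[\lm_1,\lm_2]$ this is equivalent to equality. Fix $\bfa \in \Real^{(2)}$ and set $v_i := f_i(\bfa) \in \Real$. Reading tropical $\cdot$ as classical $+$ and tropical $+$ as classical $\max$, one computes
$$(f\trn{f})(\bfa) \;=\; \max_i v_i + \max_i \sum_{j\ne i} v_j \;=\; \max_i v_i \;+\; \sum_j v_j \;-\; \min_i v_i,$$
while
$$\Bigl(\prod_{i=1}^{m}(f_i+f_{i+1})\Bigr)(\bfa) \;=\; \sum_{i=1}^{m}\max(v_i,v_{i+1}) \;=\; \sum_j v_j + \tfrac12 \sum_{i=1}^{m}|v_{i+1}-v_i|,$$
with the cyclic convention $v_{m+1}:=v_1$ and the identity $2\max(a,b)=a+b+|a-b|$. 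Subtracting $\sum_j v_j$ from both sides reduces the theorem to the numerical cyclic identity $\max_i v_i - \min_i v_i = \tfrac12 \sum_{i=1}^{m}|v_{i+1}-v_i|$, which is exactly the assertion that the cyclic sequence $(v_1,\dots,v_m)$ is \emph{unimodal}: walking cyclically, it rises weakly to a maximum and then falls weakly to a minimum, without further oscillation.

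To establish unimodality I invoke the hypothesis that $S(f)$ is empty. By the upper-convex-hull description of $S(f)$, empty subdivision forces the lifted points $(p_i,q_i,\al_i) \in \Real^3$, where $\bfi_i=(p_i,q_i)$ is the exponent vector of $f_i$ and $\al_i$ its coefficient, to be coplanar. Hence $\al_i = c_0 + c_1 p_i + c_2 q_i$ for suitable real constants $c_0,c_1,c_2$, and consequently
$$v_i \;=\; \al_i + p_i a_1 + q_i a_2 \;=\; c_0 \,+\, (p_i,q_i)\cdot\bfb, \qquad \bfb := \bfa + (c_1,c_2).$$
So, up to the harmless additive constant $c_0$, the values $v_i$ are the values at the vertices $(p_i,q_i)$ of $\Delta$, listed in cyclic order, of the linear functional $\ell(x,y):=\bfb\cdot(x,y)$.

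The final step is the elementary convex-geometric fact that a linear functional $\ell$ on a convex polygon, evaluated at its vertices in cyclic order, yields a unimodal sequence: $\ell$ attains its maximum at some vertex $P^*$ and its minimum at some vertex $P_*$, and each of the two cyclic arcs of $\partial\Delta$ joining $P_*$ to $P^*$ is traversed while $\ell$ changes weakly monotonically. The sum $\sum|v_{i+1}-v_i|$ along such a monotone arc therefore telescopes to $\max-\min$, and the two arcs together contribute $2(\max-\min)$, which is exactly the required identity.

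The main obstacle is the bookkeeping for degenerate configurations: when $\bfb$ is orthogonal to one or more edges of $\Delta$ the functional $\ell$ is constant along those edges, consecutive values $v_i=v_{i+1}$ can coincide, and the extrema may be attained on whole faces, so monotonicity along each arc is only weak. Nevertheless the telescoping argument is unaffected, since along a weakly monotone path the sum of absolute differences still collapses to the end-point difference. Once this bookkeeping is handled, the chain of equalities above collapses to \eqref{eq:id2}, because the middle expression $\bigl(\sum f_i\bigr)\bigl(\sum_i\prod_{j\ne i}f_j\bigr)$ is by definition $f\trn{f}$.
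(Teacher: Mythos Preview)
Your proof is correct and takes a genuinely different route from the paper. The paper derives the identity as a byproduct of the geometric symmetry result (Theorem~\ref{thm:symmOfSur}): since $Z(\trn{f})$ is the point reflection of the starred curve $Z(f)$ through its unique vertex, the union $Z(f)\multU Z(\trn{f})$ consists of the full lines through that vertex, one for each edge of~$\Delta$; these lines are precisely the primitives $Z(f_i+f_{i+1})$ for consecutive $i$, and the polynomial identity is read off from this equality of weighted zero sets. Your argument is instead a direct pointwise computation: evaluate both sides at an arbitrary $\bfa$, reduce to the numerical statement $\max_i v_i-\min_i v_i=\tfrac12\sum_i|v_{i+1}-v_i|$, and settle it by the observation that a linear functional on the cyclically ordered vertices of a convex polygon is unimodal. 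Your route is more elementary and self-contained, and it bypasses the step of recovering polynomial equality from equality of weighted varieties; the paper's route, on the other hand, makes transparent \emph{why} only the binomials $f_i+f_{i+1}$ for adjacent vertices of~$\Delta$ appear. One small caveat worth flagging: your inference ``empty subdivision $\Rightarrow$ coplanar lifts'' is using the reading of \emph{empty} as \emph{trivial} (a single cell in~\eqref{eq:polySub}), which is indeed the hypothesis the paper needs here; under the literal wording ``no interior vertices'' the implication fails (e.g.\ a square split along a diagonal), but so does the identity itself, so your interpretation is the right one.
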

Here, the Newton polytope $\Delta$ is a polygon whose vertices all
lie on the boundary $\partial\Delta$ of $\Delta$, corresponding to
monomials $f_i$ of $f$. These vertices, and respectively the
$f_i$'s, can be labelled according to their order on
$\partial\Delta$. So, in the theorem, any pair $f_i$ and $f_{i+1}$
correspond to adjacent vertices on $\partial\Delta$.

\section{Symmetry of lattice polytopes}

A lattice polytope $\Delta$ is a polytope whose vertices are
lattice points of a lattice $\Lt$. Assume $\Lt = \Int^{(n)}$ is a
lattice embedded in $\Real^{(n)}$, and $\Dl$ is a lattice polytope
on $\Lt$; often an integral  linear translation, we may assume
that $\Dl$ is a lattice polytope on $\Net^{(n)} \hookrightarrow
\Real_+^{(n)}$. So we may assume that $\Lt = \Net^{(n)}$.

Given a lattice polytope $\Dl \in \Net^{(n)}$, one can assign to
$\Dl$ a tropical polynomial $f = \sum_i f_i $ in
$\tTrop[\lm_1,\dots,\lm_n]$, whose Newton polytope is $\Dl$.
Indeed, to any vertex $\bfv = (v_1, \dots, v_n)$ of $\Dl$ assign
the monomial $f_i = \lm_1^{v_1} \cdots \lm_n^{v_n} $. Thus, a
lattice polytope can be regraded as a Newton polytope (with empty
subdivision).

Assume $Z(f) = \Sur \subset \Real^{(n)}$ is a primitively
irreducible tropical starred hypersurface. When $n=2$, the Newton
polytope $\Delta$ of $f$ does not contain any parallel edges;
otherwise by the duality between $\Delta$  and $\Sur$, the latter
must have a primitive factor. In the general case of
$\Real^{(n)}$, the edges of $\Dl$ that intersect transversally
with a hyperplane cut of $\Dl$ are not all parallel.

Due to  duality between tropical hypersurfaces and their Newton
polytope, the point \symmetry\ of a primitively irreducible
tropical  starred  hypersurface  also induces a \symmetry\ of the
corresponding Newton polytope. Moreover, in the case of Newton
polytopes, the \symmetry \ is always a point \symmetry.


\begin{theorem}\label{thm:symetryOfPolytope} Let $\Dl$ be the Newton polytope of
a primitively irreducible polynomial  $f \in \tTrop[\lm_1, \dots,
\lm_n]$, assume $\Dl$ has empty subdivision, and let $\trn{\Dl}$
be the Newton polytope of $\trn{f}$. Then $\trn{\Dl}$ is a point
\symmetry\ of  $\Dl$, and vise versa.
\end{theorem}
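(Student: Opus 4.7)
The plan is to compute the monomials of $\trn{f}$ directly, read off their exponent vectors, and recognize the resulting Newton polytope $\trn{\Dl}$ as an affine point reflection of $\Dl$. Writing $f = \sum_{i=1}^{m} \al_i \Lm^{\bfv_i}$, the empty-subdivision hypothesis ensures that the exponents $\bfv_1, \dots, \bfv_m$ are precisely the vertices of $\Dl$. Setting $S = \sum_{i=1}^{m} \bfv_i$, one computes
$$\prod_{j \neq i} f_j \;=\; \Big(\prod_{j \neq i}\al_j\Big)\,\Lm^{S - \bfv_i},$$
so $\trn{f} = \sum_{i=1}^{m} \prod_{j\neq i} f_j$ has exponent set $\{\,S - \bfv_i : 1 \le i \le m\,\}$.

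Next I would observe that $\bfv \mapsto S - \bfv$ is the classical point reflection through the center $\bfo := S/2 \in \Real^{(n)}$, which is an involutive affine bijection. Affine bijections carry extreme points of a polytope to extreme points of the image and convex hulls to convex hulls, so the convex hull of $\{S - \bfv_i\}$ equals $2\bfo - \Dl$, the reflection of $\Dl$ through $\bfo$. Since each $\bfv_i$ is a vertex of $\Dl$ (by empty subdivision), each $S - \bfv_i$ is a vertex of the reflected polytope, so no monomial of $\trn{f}$ is redundant and the Newton polytope $\trn{\Dl}$ of $\trn{f}$ is exactly $2\bfo - \Dl$.

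Once we have $\trn{\Dl} = 2\bfo - \Dl$, the point symmetry of the union $\Dl \cup \trn{\Dl}$ through $\bfo$ is immediate from condition \eqref{eq:ptSym}: reflection through $\bfo$ swaps $\Dl$ and $\trn{\Dl}$, so the union is stable, which is exactly what it means for $\trn{\Dl}$ to be a point symmetry of $\Dl$ in the sense of the paper. The ``vice versa'' claim follows from the involutive nature of the reflection: applying the same reflection to $\trn{\Dl}$ returns $\Dl$, and on the algebraic side this corresponds to the fact that the Newton polytope of $\trn{\trn{f}}$ equals that of $f$, up to the same translation.

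The argument is essentially bookkeeping, so I do not expect a serious obstacle. The one subtlety worth pinning down is that every monomial $\prod_{j\neq i}f_j$ of $\trn{f}$ genuinely contributes a vertex of $\trn{\Dl}$ rather than being absorbed into the convex hull of the others; this is exactly where the empty-subdivision hypothesis on $\Dl$ enters, via the preservation of extreme points under the affine reflection $\bfv \mapsto S - \bfv$.
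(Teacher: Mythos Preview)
Your argument is correct and takes a genuinely different route from the paper. The paper proves the theorem indirectly, via the duality between Newton polytopes and tropical hypersurfaces: since $Z(f)$ is dual to $\Dl$ and $Z(\trn{f})$ is dual to $\trn{\Dl}$, the $k$-plane symmetry between $Z(f)$ and $Z(\trn{f})$ established in Theorem~\ref{thm:symmOfSur} is transported back through the duality to give the point symmetry of the polytopes. Your approach bypasses the hypersurface picture entirely: you compute the exponent vectors of $\trn{f}$ directly as $S-\bfv_i$, recognize $\bfv \mapsto S-\bfv$ as the affine point reflection through $\bfo = S/2$, and use that affine bijections preserve extreme points and convex hulls. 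This is more elementary and self-contained, yields the center of symmetry explicitly as $\bfo = \frac{1}{2}\sum_i \bfv_i$, and in fact never invokes the primitive-irreducibility hypothesis. The paper's route, by contrast, ties the polytope statement conceptually to the hypersurface symmetry developed in the preceding section, which is the organizing theme there; your route trades that narrative link for a cleaner standalone proof.
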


Note that the point-symmetry need not be along a lattice point,
and might be along any point $\bfo \in \Real^{(n)}$.

\begin{proof} We prove the theorem for the boundary
$\partial \Dl$, which is enough  since $\Dl$ is convex. $f$ and
$\trn{f}$, in $\tTrop[\lm_1, \dots,\lm_n ]$,  determine the Newton
polytopes $\Dl$, and $\trn{\Dl}$ uniquely. $\Sur = Z(f)$ is dual
to $\Dl$ and $\trn{\Sur}$ is dual to $\trn{\Dl}$.   The \symmetry\
from $\Sur$ to $\trn{\Sur}$, cf. Theorem \ref{thm:symmOfSur},
completes the proof.
\end{proof}

\begin{corollary}
$\trn{(\trn{\Dl})} = \Dl$ and thus the relation in Theorem
\ref{thm:symetryOfPolytope} induces a duality of lattice
polytopes.
\end{corollary}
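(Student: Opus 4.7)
The plan is to recognize that the corollary amounts to the statement that point reflection is an involution, once we use Theorem~\ref{thm:symetryOfPolytope} as a black box. By that theorem, $\trn{\Dl}$ is the point reflection of $\Dl$ through some center $\bfo \in \Real^{(n)}$, i.e.\ $\trn{\Dl} = \{2\bfo - \bfa : \bfa \in \Dl\}$. Since affine point reflection is an isomorphism of $\Real^{(n)}$, it preserves the property of being a lattice polytope with empty subdivision, and the assumption that the associated polynomial is primitively irreducible passes to the transpose as well. Hence Theorem~\ref{thm:symetryOfPolytope} applies again to $\trn{\Dl}$, giving that $\trn{(\trn{\Dl})}$ is the point reflection of $\trn{\Dl}$ through some center $\bfo'$.

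The key step is to identify $\bfo'$ with $\bfo$. I would argue this from uniqueness of the center of symmetry: the union $\Dl \cup \trn{\Dl}$ is by construction point-symmetric about $\bfo$, and a bounded convex-hull configuration determines its center of symmetry uniquely, so the reflection taking $\trn{\Dl}$ to a polytope congruent to $\Dl$ must be through $\bfo$ itself. Then the elementary identity $\bfa = 2\bfo - (2\bfo - \bfa)$ gives $\trn{(\trn{\Dl})} = \Dl$ as sets in $\Real^{(n)}$.

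An alternative and perhaps cleaner route is to descend from the hypersurface side: Theorem~\ref{thm:symmOfSur} identifies $\trn{\Sur}$ with $\sym{\Sur}$ for the corresponding starred hypersurface $\Sur = Z(f)$, and Corollary~\ref{cor:symsym} states $\sym{(\sym{\Sur})} = \Sur$. Chaining these yields $\trn{(\trn{\Sur})} = \Sur$, and since the (weighted) hypersurface determines its Newton polytope up to integral translation, $\trn{(\trn{\Dl})} = \Dl$ after the natural normalization. The main obstacle here is the up-to-translation ambiguity in the hypersurface-to-Newton-polytope correspondence, but this is resolved by fixing a base point once and for all; the polytope-level Theorem~\ref{thm:symetryOfPolytope} already contains the right normalization through its explicit center $\bfo$.

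The final clause, that this gives a duality of lattice polytopes, is then formal: the assignment $\Dl \mapsto \trn{\Dl}$ on lattice polytopes satisfying the hypotheses of Theorem~\ref{thm:symetryOfPolytope} is its own inverse, hence a bijective involution on this class, which is precisely what one means by a duality.
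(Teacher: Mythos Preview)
The paper gives no proof of this corollary; it is evidently meant to follow immediately from the hypersurface-level involution $\sym{(\sym{\Sur})}=\Sur$ (Corollary~\ref{cor:symsym}) transported through the Newton-polytope duality used in Theorem~\ref{thm:symetryOfPolytope}. Your second route is exactly this, and it is the correct one.

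Your first route, however, contains a genuine error. The claim that the second center $\bfo'$ coincides with the first center $\bfo$ is false. Write the vertex set of $\Dl$ as $\{\bfi_1,\dots,\bfi_m\}$ (the exponent vectors of the $f_i$) and put $\bfs=\sum_k \bfi_k$. The vertices of $\trn{\Dl}$ are $\bfs-\bfi_k$, so the reflection center for $\Dl\leftrightarrow\trn{\Dl}$ is $\bfo=\bfs/2$. Repeating the construction on $\trn{\Dl}$, the sum of its vertices is $(m-1)\bfs$, so the new center is $\bfo'=(m-1)\bfs/2$, and the vertices of $\trn{(\trn{\Dl})}$ are $(m-2)\bfs+\bfi_k$. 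Thus $\trn{(\trn{\Dl})}=\Dl+(m-2)\bfs$, which equals $\Dl$ only when $m=2$. Your uniqueness argument fails because it conflates the center of symmetry of $\Dl\cup\trn{\Dl}$ with that of $\trn{\Dl}\cup\trn{(\trn{\Dl})}$; these are different configurations with different centers.

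This same computation confirms what you observe in your second approach: on the polynomial level $\trn{(\trn{f})}=\pfi{f}^{\,m-2}f$, a monomial multiple of $f$, so $Z(\trn{(\trn{f})})=Z(f)$ and the Newton polytopes agree only up to integral translation. The corollary should therefore be read modulo translation, consistently with the lemma that immediately follows it in the paper. Your acknowledgment of this caveat in the second approach is correct; the attempt in the first approach to avoid it by pinning down a single center does not work.
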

\begin{remark} Theorem \ref{thm:symetryOfPolytope} can be extended
to polytopes of the same type on $\Int^{(n)}$. In this case the
assigned  polynomials are Laurent polynomials, i.e. tropical
polynomials over $\tTrop[\lm_1,\lm_1^{-1}, \dots, \lm_n,\lm_n^{-1}
]$, whose Newton polytopes are lattice polytopes over $\Lt =
\Int^{(n)}$.
\end{remark}
\begin{lemma}
 Any integral translation of $\sym{\Dl}$ on $\Lt$ is
also a \symmetry\ of $\Dl$.
\end{lemma}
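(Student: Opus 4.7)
The plan is to reduce the claim to the elementary translation-equivariance of point symmetry, using the polytope/polynomial duality furnished by Theorem~\ref{thm:symetryOfPolytope}.

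First I would apply Theorem~\ref{thm:symetryOfPolytope} to produce a point $\bfo \in \Real^{(n)}$ about which $\Dl \cup \sym{\Dl}$ is point-symmetric; recall that $\sym{\Dl} = \trn{\Dl}$ is the Newton polytope of the transpose $\trn{f}$ of the primitively irreducible polynomial $f$ whose Newton polytope is $\Dl$. Next, for any $\bfv \in \Lt$, I would propose that $\Dl \cup (\sym{\Dl} + \bfv)$ is point-symmetric about the shifted center $\bfo' = \bfo + \frac{1}{2}\bfv$, and verify this by chasing the defining condition \eqref{eq:ptSym} in both directions: for $\bfa \in \Dl$ one has $2\bfo' - \bfa = (2\bfo - \bfa) + \bfv \in \sym{\Dl} + \bfv$ by the original symmetry, while for $\bfa = \bfb + \bfv \in \sym{\Dl} + \bfv$ one gets $2\bfo' - \bfa = 2\bfo - \bfb \in \Dl$. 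This shows $\sym{\Dl} + \bfv$ is again a symmetry of $\Dl$.

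As a conceptual alternative, the same conclusion can be read off algebraically: an integral translation of $\sym{\Dl}$ by $\bfv$ corresponds to multiplying $\trn{f}$ by the Laurent monomial $\Lm^{\bfv}$, which does not alter the tropical hypersurface $Z(\trn{f})$ (monomials having no tropical roots other than $\minf$). Via the Newton polytope duality of Section~\ref{nsec2}, the symmetry relation between hypersurfaces established in Theorem~\ref{thm:symmOfSur} transfers verbatim to the translated polytope, giving an independent confirmation of the direct geometric argument above.

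I do not anticipate a substantive obstacle; the lemma is essentially the translation-equivariance of point symmetry dressed up in lattice language. The one subtlety to flag is that the new center $\bfo' = \bfo + \frac{1}{2}\bfv$ need not lie on $\Lt$ even when $\bfv$ does, for instance if some component of $\bfv$ is odd. This however is explicitly allowed by the note following Theorem~\ref{thm:symetryOfPolytope} that the point-symmetry need not be along a lattice point and may be along any point of $\Real^{(n)}$, so nothing goes wrong.
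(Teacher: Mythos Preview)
Your proposal is correct, and your algebraic alternative is precisely the paper's own proof: the paper observes that $Z(\trn{f}) = Z(h\,\trn{f})$ for any monomial $h$, while the Newton polytope of $h\,\trn{f}$ is an integral translate of $\trn{\Dl}$, so every such translate inherits the symmetry via the hypersurface/polytope duality.

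Your primary route, the direct translation-equivariance argument, is not in the paper and is more elementary, since it avoids re-invoking the hypersurface duality. One point to tighten: when you write ``$2\bfo - \bfa \in \sym{\Dl}$ by the original symmetry'' for $\bfa \in \Dl$, you are using that the reflection through $\bfo$ actually \emph{swaps} $\Dl$ and $\sym{\Dl}$, which is stronger than the literal definition (that the union is point-symmetric). This swap property does hold here, because on exponent vectors the passage $f \mapsto \trn{f}$ sends each vertex $\bfi$ of $\Dl$ to $\bfs - \bfi$ where $\bfs$ is the exponent of $\pfi{f} = \prod_i f_i$; so the reflection is through $\bfo = \bfs/2$ and maps $\Dl$ bijectively onto $\trn{\Dl}$. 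With that noted, your chase of \eqref{eq:ptSym} goes through. A bonus of your direct argument is that it shows the symmetry persists under \emph{any} real translation; integrality is only needed to keep the translated polytope on the lattice $\Lt$, which the algebraic proof enforces automatically through the monomial $h$.
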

\begin{proof} Assume $\Dl$ is the Newton polytope of $f\in \tTrop[\lm_1, \dots,\lm_n ]$,
 and take $Z(\trn{f})$ which determines the Newton polytope $\trn{\Dl}$
uniquely up to integral translation on $\Lt$; that is, for any
monomial $h \in \tTrop[\lm_1, \dots,\lm_n ]$, $Z(\trn{f})= Z(h
\trn{f})$. But the Newton polytope of $h \trn{f}$ is just an
integral translation of $\trn{\Dl}$ on $\Lt$.
\end{proof}

\begin{example}\label{exm:4}
\begin{figure}
\setlength{\unitlength}{0.8cm}
\begin{picture}(10,8)(0,0)
\thicklines
 \dottedline{0.15}(2,0.2)(5.9,3.9)
 \dottedline{0.15}(1.4,0.8)(6.5,3.4)
 \dottedline{0.15}(2,1.8)(5.9,2.4)
 \dottedline{0.15}(3.35,0.8)(4.55,3.3)


\put(1,3){$\begin{array}{cccccccccccc}
             \circ & \circ & \circ & \bullet & \circ & \circ & \circ & \circ & \circ & \circ & \circ & \circ \\
              \circ & \bullet &\circ & \circ & \bullet & \circ & \circ & \circ & \circ & \circ & \circ & \circ \\
              \circ & \circ &\circ & \circ & \circ & \circ & \circ & \circ & \circ & \circ & \circ & \circ \\
              \circ & \circ &\bullet & \bullet & \circ & \circ & \circ & \circ & \circ & \circ & \circ & \circ \\
              \circ & \circ &\circ & \circ & \circ & \circ & \circ & \bullet & \circ & \circ & \circ & \circ \\
              \circ & \circ &\circ & \circ & \circ & \bullet & \circ & \circ & \bullet & \circ & \circ & \circ \\
              \circ & \circ &\circ & \circ & \circ & \circ & \circ & \circ & \circ & \circ & \circ & \circ \\
              \circ & \circ &\circ & \circ & \circ & \circ & \bullet & \bullet & \circ & \circ & \circ & \circ \\
             \circ & \bullet & \bullet & \circ & \circ & \circ & \circ & \circ & \circ & \circ & \circ & \circ \\
              \circ & \circ &\circ & \circ & \circ & \circ & \circ & \circ & \circ & \circ & \circ & \circ \\
              \bullet & \circ &\circ & \bullet & \circ & \circ & \circ & \circ & \circ & \circ & \circ & \circ \\
              \circ & \bullet &\circ & \circ & \circ & \circ & \circ & \circ & \circ & \circ & \circ & \circ \\
               \end{array} $}
\thicklines
\drawline(2,0.2)(1.4,0.8)(2,1.8)(2.7,1.8)(3.3,0.75)(2,0.2)

\thicklines
\drawline(5.9,3.95)(6.6,3.4)(6,2.35)(5.25,2.35)(4.6,3.4)(5.9,3.95)

\thicklines
\drawline(3.95,5.55)(3.3,4.4)(2.65,4.4)(2,5.5)(3.3,6.05)(3.95,5.55)

\thinlines \drawline[-20](1.4,0.8)(3.95,5.55)
\drawline[-20](2.7,1.8)(2.65,4.4)\drawline[-20](3.3,0.75)(2,5.5)
\drawline[-20](2,0.2)(3.3,6.05)

 \put(0.8 ,0.8){$\Dl$} \put(6.1 ,3.9){$\sym{\Dl}$}
\put(2.8 ,6.2){$\sym{\Dl_t}$}

\end{picture}
\caption{Illustration for Example \ref{exm:4}. \label{fig:4} }
\end{figure} Let $\Dl$ be the lattice polytope in $\Net^{(n)}$
with vertices $(1,0), (0,1), (3,2), (1,3)$, and  $(2,3)$, which
has no parallel edges.  Assign to $\Dl$ the primitively
irreducible polynomial $$f = \lm_1 + \lm_2 +\lm_1^3 \lm_2 +
\lm_1\lm_2^3 +\lm_1^2 \lm_2^3$$ in $\tTrop[\lm_1,\dots, \lm_n]$
(with Newton polytope $\Delta$), and compute  $$\sym{f} =
\lm_1^8\lm_2^8 + \lm_1^9\lm_2^7 + \lm_1^6\lm_2^7 +\lm_1^8\lm_2^5 +
\lm_1^7\lm_2^5,$$ whose Newton polytope $\sym{\Delta}$ has the
vertices $(8,8), (9,7), (6,7), (8,5)$ and $(7,5)$. Then
$\sym{\Delta}$ is a point \symmetry\ of $\Delta$ and vise versa.
See Fig \ref{fig:4}, the dotted lines show the symmetry between
$\Delta$ and $\sym{\Dl}$. The dashed lines show the symmetry
between $\Delta$ and $\sym{\Dl_t}$, an integral translation of
$\sym{\Dl}$ on $\Lt$.
\end{example}


\end{document}